% SIAM Article Template
\documentclass[onefignum,onetabnum]{siamonline190516}

% Information that is shared between the article and the supplement
% (title and author information, macros, packages, etc.) goes into
% ex_shared.tex. If there is no supplement, this file can be included
% directly.

% SIAM Shared Information Template
% This is information that is shared between the main document and any
% supplement. If no supplement is required, then this information can
% be included directly in the main document.

% Packages and macros go here
\usepackage{lipsum}
\usepackage{amsfonts}
\usepackage{graphicx}

\usepackage{enumitem}

\newcommand{\nc}{\normalcolor}

% Used for creating new theorem and remark environments
\newsiamremark{remark}{Remark}
\newsiamremark{hypothesis}{Hypothesis}
\crefname{hypothesis}{Hypothesis}{Hypotheses}
\newsiamthm{claim}{Claim}

% Sets running headers as well as PDF H and authors
\headers{Analysis of a Computational Framework for Bayesian Inverse Problems}{D. Sanz-Alonso and N. Waniorek}

% Title. If the supplement option is on, then "Supplementary Material"
% is automatically inserted before the title.
\title{Analysis of a Computational Framework for Bayesian Inverse Problems: Ensemble Kalman Updates and MAP Estimators Under Mesh Refinement}

% Authors: full names plus addresses.
\author{Daniel Sanz-Alonso\thanks{Department of Statistics, University of Chicago, Chicago, IL 
  (\email{sanzalonso@uchicago.edu}}).
  \and Nathan Waniorek\thanks{Committee on Computational and Applied Mathematics, University of Chicago, Chicago, IL 
  (\email{waniorek@uchicago.edu}}).
 }

%%%%%%%%%%%%%

% Metrics between probability measures

\newcommand{\R}{\mathbb{R}}

\newcommand{\N}{\mathbb{N}}

%Objective functionals

\usepackage[font = small, margin=30pt]{caption}
%==============
% Probability
%==============

\newcommand{\Expect}{\operatorname{\mathbb{E}}}
\newcommand{\V}{\operatorname{\mathbb{V}}}

\newcommand{\M}{\mathcal{M}}

%%% Local Variables: 
%%% mode:latex
%%% TeX-master: "ex_article"
%%% End: 

\renewcommand{\H}{\mathcal{H}}
\renewcommand{\V}{\mathcal{V}}
\renewcommand{\M}{\mathcal{M}}

\newcommand{\G}{\mathcal{G}}
\newcommand{\F}{\mathcal{F}}

\newcommand{\Cm}{\mathcal{C}}
\newcommand{\B}{\mathcal{B}}
\newcommand{\msK}{\mathscr{K}}

\newcommand{\D}{\mathcal{D}}
\newcommand{\Nc}{\mathcal{N}}

\newcommand{\iid}{\stackrel{\text{i.i.d.}}{\sim}}

\let\oldphi\phi
\renewcommand{\phi}{\varphi}
\newcommand{\eps}{\varepsilon}

  \newtheorem{example}[theorem]{Example}
    \newtheorem{assumption}[theorem]{Assumption}

    \usepackage{mathtools}

    \usepackage{amsmath,amssymb}

    \usepackage{mathrsfs} 

     % In case we don't load bbm

    \usepackage{bm} % boldmath must be called after the package

\usepackage{bbold}

% Optional PDF information
\ifpdf
\hypersetup{
  pdftitle={Analysis of a Computational Framework for Bayesian Inverse Problems},
  pdfauthor={ D. Sanz-Alonso and N. Waniorek}
}
\fi

% The next statement enables references to information in the
% supplement. See the xr-hyperref package for details.

\externaldocument{ex_supplement}

% FundRef data to be entered by SIAM
%<funding-group specific-use="FundRef">
%<award-group>
%<funding-source>
%<named-content content-type="funder-name"> 
%</named-content> 
%<named-content content-type="funder-identifier"> 
%</named-content>
%</funding-source>
%<award-id> </award-id>
%</award-group>
%</funding-group>

\begin{document}
\def\argmindum{\mathop{\mbox{argmin}}}

\def\argmin#1{\argmindum_{#1}}

\maketitle

% REQUIRED
\begin{abstract}
This paper analyzes a popular computational framework to solve infinite-dimensional Bayesian inverse problems, discretizing the prior and the forward model in a finite-dimensional weighted inner product space. We demonstrate the benefit of working on a weighted space by establishing operator-norm bounds for finite element and graph-based discretizations of Mat\'ern-type priors and deconvolution forward models. For linear-Gaussian inverse problems, we develop a general theory to characterize the error in the approximation to the posterior. We also embed the computational framework into ensemble Kalman methods and MAP estimators for nonlinear inverse problems. Our operator-norm bounds for prior discretizations guarantee the scalability and accuracy of these algorithms under mesh refinement.
\end{abstract}

% REQUIRED
\begin{keywords}
Bayesian Inverse Problem, Discretization Error, Ensemble Kalman Methods, MAP Estimation
\end{keywords}

% REQUIRED
\begin{AMS}
		65M32, 62F15, 68Q25, 35Q62
\end{AMS}

\section{Introduction}\label{sec:introduction} 
Bayesian inverse problems on infinite-dimensional Hilbert spaces arise in numerous applications, including medical imaging, seismology, climate science, reservoir modeling, and mechanical engineering \cite{calvetti2008hypermodels,bui2013computational,iglesias2015iterative,bigoni2020data,cleary2021calibrate}. In these and other applications, it is important to reconstruct function input parameters of partial differential equations (PDEs) based on noisy measurement of the PDE solution. This paper analyzes a framework to numerically solve infinite-dimensional Bayesian inverse problems, where the discretization is carried out in a weighted inner product space. The framework, along with a compelling demonstration of its computational benefits, was introduced in \cite{bui2013computational} to solve PDE-constrained Bayesian inverse problems using finite element discretizations. We develop a rigorous analysis that explains the advantage of working on a weighted space. Our theory accommodates not only finite element discretizations, but also graph-based methods from machine learning. For linear-Gaussian inverse problems, we bound the error in the approximation to the posterior. More broadly, we embed the discretization framework into ensemble Kalman methods and \emph{maximum a posteriori} (MAP) estimators for nonlinear inverse problems, and study the accuracy and scalability of these algorithms under mesh refinement.
%. Our analysis guarantees the scalability and accuracy of these algorithms under mesh refinement.

An overarching theme of this paper is that,
by suitably choosing the weighted inner product, one can ensure accurate approximation of the infinite-dimensional Hilbert inner product as the discretization mesh is refined. Following this unifying principle, we analyze the finite element discretizations considered in \cite{bui2013computational} together with graph-based discretizations. Finite elements are predominant in scientific computing solution of PDE-constrained inverse problems, see e.g. \cite{kaipio2006statistical,bui2013computational,bigoni2020data}, whereas graph-based methods have been used, for instance, to solve inverse problems on manifolds \cite{harlim2020kernel,harlim2022graph,jiang2023ghost} and in machine learning applications in semi-supervised learning \cite{belkin2004semi,garcia2018continuum}. For both types of discretization, we obtain  operator-norm bounds for important classes of prior and forward models. Specifically, we will use Mat\'ern-type Gaussian priors and deconvolution forward models as guiding examples.

 Mat\'ern priors play a central role in Bayesian inverse problems \cite{roininen2014whittle}, spatial statistics \cite{stein2012interpolation}, and machine learning \cite{williams2006gaussian}. Efficient algorithms to sample Mat\'ern priors within the computational framework of \cite{bui2013computational} have recently been investigated in \cite{antil2022efficient}. Here, we obtain new operator-norm bounds for both finite element and graph-based discretizations. The former rely on classical theory, while the latter extend recent results from \cite{sanz2022spde}.
Deconvolution forward models arise in image deblurring and heat inversion, among other important applications. Since the seminal  paper by Franklin that introduced the formulation of Bayesian inversion in function space \cite{franklin1970well}, heat inversion has been widely adopted as a tractable testbed for theoretical and methodological developments, see e.g. \cite{stuart2010inverse,garcia2018continuum,trillos2017consistency,harlim2022graph}. 
Here, we derive operator-norm bounds for both finite element and graph-based discretizations. 

For linear-Gaussian inverse problems, operator-norm error bounds for prior and forward model discretizations translate into error bounds in the approximation of the posterior. We formalize this claim under a general assumption, which we verify for our guiding examples of Mat\'ern priors and deconvolution. Additionally, we apply the computational framework in \cite{bui2013computational} to algorithms for nonlinear inverse problems beyond the Markov chain Monte Carlo method for posterior sampling considered in \cite{petra2014computational}. Specifically, we investigate (i) ensemble Kalman methods \cite{iglesias2013ensemble,chada2020iterative} where we show, building on \cite{ghattas2022non}, that the effective dimension which determines the required sample size remains bounded along mesh refinements; and (ii) MAP estimation \cite{kaipio2006statistical,dashti2013map} where we show, building on \cite{ayanbayev2021convergence}, the convergence of MAP estimators under mesh refinement to the MAP estimator of the infinite-dimensional inverse problem. These results complement the vast literature on function-space sampling algorithms, see e.g. \cite{cotter2013,agapiou2017importance,ottobre2015function,trillos2017consistency}, and demonstrate that ensemble Kalman methods and MAP estimation can be scalable and accurate under mesh refinement.  We exemplify the new theory for MAP estimation in the nonlinear inverse problem of recovering the initial condition of the Navier-Stokes equations from pointwise observations of the velocity field \cite{cotter2009bayesian,cotter2010approximation,nickl2023posterior}. 

The well-posedness of the posterior measure under perturbations is one of the hallmarks of the Bayesian formulation of inverse problems \cite{stuart2010inverse,latz2020well,latz2023bayesian,sanzstuarttaeb}. For a fixed prior measure, the error in the posterior measure caused by discretization of the forward model can be bounded in Hellinger distance \cite{cotter2010approximation} and in Kullback-Leibler divergence \cite{marzouk2009stochastic}.
% discretization error in the forward model translates into an error bound for the posterior measure in Hellinger distance \cite{cotter2010approximation} and in Kullback-Leibler divergence \cite{marzouk2009stochastic}.
Posterior stability under perturbations to the prior measure in addition to perturbations to the likelihood have been recently investigated using Wasserstein distance \cite{sprungk2020local} and more general integral probability metrics \cite{garbuno2023bayesian}. These results hold even when the prior and the perturbation are mutually singular, as is the case for a discretization of the prior measure. In the linear-Gaussian setting, the Wasserstein distance bounds in \cite{sprungk2020local} yield a stability theory similar to ours; however, bounding the Wasserstein distance between the prior measures requires trace bounds on the covariance operators, whereas our results only necessitate operator norm bounds.
\color{black}
\subsection{Outline and Main Contributions}

\begin{itemize}[leftmargin=.09in]
    \item Section \ref{sec: formulation} reviews the formulation of a Bayesian inverse problem in a Hilbert space and presents our general discretization framework. We introduce the heat inversion problem with Mat\'ern Gaussian process prior as a model guiding example, and then illustrate how finite element and graph-based methods can be interpreted within our discretization framework.
    \item Section \ref{sec:unifiederroranalysis} presents a novel analysis of the error incurred in our discretization framework for linear-Gaussian Bayesian inverse problems. Theorems \ref{thm:posterior mean Theorem} and \ref{thm:posterior covariance Theorem} quantify the errors in the discretized posterior mean and covariance operators to their continuum counterparts, up to universal constants. We then verify the assumptions of this general theory for the finite element and graph-based discretizations of the heat inversion problem, thus deriving error bounds for these popular discretization schemes.
    \item Section \ref{sec:ensembleKalmanupdates} formulates the ensemble Kalman update within our general discretization framework. The results in this section give non-asymptotic bounds on the ensemble estimation of the posterior mean and covariance in terms of a notion of effective dimension based on spectral decay of the prior covariance operator. We then show that the effective dimension of the discretized prior covariance can be controlled by the effective dimension of the continuum covariance, which is necessarily finite. Consequently, the ensemble approximation will not deteriorate under mesh refinement.
    \item Section \ref{sec:MAPconvergence}
    considers posterior measures resulting from nonlinear Bayesian inverse problems and their MAP estimators. We show that with a suitable discretization of the forward model, the MAP estimators of the computationally tractable discretized posterior measures converge to the MAP estimators of the continuum posterior.  Finally, we apply the theory to the inverse problem of recovering the initial condition of the Navier-Stokes equations from pointwise observations. \color{black}
    \item Section \ref{sec:conclusions} closes with conclusions and directions for future work. 
\end{itemize}

\subsection{Notation} 
$\mathcal{S}_+^d$ denotes the set of $d\times d$ symmetric positive-semidefinite matrices and $\mathcal{S}_{++}^d$ denotes the set of $d\times d$ symmetric positive-definite matrices. Similarly, $\mathcal{S}_+^\H$ denotes the set of symmetric positive-semidefinite trace-class operators from a Hilbert space $\H$ to itself, and $\mathcal{S}_{++}^{\H}$ denotes the set of symmetric positive-definite trace-class operators from $\H$ to itself. Given two normed spaces $(X,\|\cdot\|_X)$ and $(Y,\|\cdot \|_Y)$, and a linear mapping $A:X\to Y$, we denote the operator norm of $A$ as $\|A\|_{op}:=\sup_{\|x\|_X=1}\|Ax\|_Y.$  $B(X,Y)$ denotes the space of all bounded linear operators from $X$ to $Y$.  Given two positive sequences $\{a_n\}$ and $\{b_n\}$, the relation $a_n\lesssim b_n$ denotes that $a_n \leq c b_n$ for some constant $c>0$. $\mathbb{1}_B$ denotes the indicator of the set $B$. Given a matrix $A\in \mathcal{S}_{++}^d,$ we denote the weighted inner product as $\langle \vec{u},\vec{v}\rangle_A:=\vec{u}^TA\vec{v}$, and the corresponding weighted norm as $\|\vec{u}\|_A:=\sqrt{\vec{u}^TA\vec{u}}$. Finally, $\|\cdot \|_2$ denotes the usual Euclidean norm on $\R^d.$
\section{Problem Setting and Computational Framework}\label{sec: formulation}
This section contains necessary background. Subsection \ref{ssec:Hilberspacesetting} overviews the formulation of Bayesian inverse problems in Hilbert space. Subsection \ref{ssec:computationalframework}  describes the computational framework analyzed in this paper. Finally, Subsection \ref{ssec:functionspacesetting} shows how finite element and graph-based methods for inverse problems in function space can be viewed as particular instances of the general framework. 

\subsection{Inverse Problem in Hilbert Space}\label{ssec:Hilberspacesetting}
Let $\H$ be an infinite-dimensional separable Hilbert space with inner product $\langle \cdot, \cdot \rangle_\H.$ Consider the inverse problem of recovering an unknown $u\in \H$ from data $y \in \R^{d_y}$ related by
\begin{align}\label{eq:IPinHilbertSpace}
    y=\mathcal{F}u+\eta, \quad \eta\sim \Nc(0,\Gamma),
\end{align}
where $\mathcal{F}: \H \to \R^{d_y}$ is a linear and bounded forward model, and $\eta$ represents Gaussian observation noise with known covariance matrix $\Gamma\in S_{++}^{d_y}$. Nonlinear forward models will be considered in Section \ref{sec:MAPconvergence}. The observation model \eqref{eq:IPinHilbertSpace} implies a Gaussian likelihood function
\begin{align}\label{eq:infinitelikelihood}
    \pi_{\text{like}}(y|u)\propto \exp \left(-\frac{1}{2}\|y-\mathcal{F}u\|_{\Gamma^{-1}}^2 \right).
\end{align}
We adopt a Bayesian approach \cite{tarantola2005inverse,kaipio2006statistical,sanzstuarttaeb} and let $\mu_0 = \Nc(m_0, \Cm_0)$ be a Gaussian prior measure on $\H,$ where $\Cm_0: \H \to \H$ is a trace-class covariance operator defined by the requirement that
\begin{align}\label{eq:covInHilbert}
    \langle v,\mathcal{C}_0w\rangle_\H=\mathbb{E}^{u \sim \mu_0}\Bigl[ \bigl\langle v,(u-m_0) \bigr\rangle_\H \bigl\langle (u-m_0),w \bigr\rangle_\H \Bigr], \quad \quad \forall v, w \in \H,
\end{align}
and $m_0 \in \H$ is assumed to belong to the Cameron-Martin space $E = \text{Im}(\Cm_0^{1/2}) \subset \H.$
The Bayesian solution to the inverse problem is the conditional law of $u$ given $y,$ called the posterior probability measure $\mu_{\text{post}}.$
Application of Bayes' rule in infinite dimension \cite{stuart2010inverse} characterizes the posterior as a change of measure with respect to the prior, with Radon-Nikodym derivative given by the likelihood:
\begin{align}\label{eq:changemeasureinfinite}
    \frac{d\mu_{\text{post}}}{d\mu_0}(u)=\frac{1}{Z}\pi_{\text{like}}(y|u),
\end{align}
where $Z=\int \pi_{\text{like}}(y|u) \, d\mu_0$ is a normalizing constant. Since the prior is Gaussian and the forward model is linear and bounded, the posterior is also Gaussian, $\mu_{\text{post}}=\Nc(m_{\text{post}},\mathcal{C}_{\text{post}}),$ where
\begin{align}\label{eq:infiniteMean}
     m_{\text{post}} &=m_0+\mathcal{C}_0\F^*(\F\mathcal{C}_0\F^*+\Gamma)^{-1}(y-\F m_0),\\
    \label{eq:infiniteCov}
     \Cm_{\text{post}} &=\Cm_0-\Cm_0\F^*(\F\Cm_0\F^*+\Gamma)^{-1}\F\Cm_0.
\end{align}
Here, $\F^*$ denotes the adjoint of $\F,$ which is the unique map from $\R^{d_y}$ to $ \H$ that satisfies
\begin{align*}
    \langle \F u, y\rangle =\langle u,\F^*y\rangle_\H, \quad \quad \forall u \in \H, y \in \R^{d_y}.
\end{align*}
In Subsection \ref{ssec:computationalframework}, we will review the computational framework in \cite{bui2013computational} to approximate the posterior. The idea is to replace the inverse problem \eqref{eq:IPinHilbertSpace} on the infinite-dimensional Hilbert space $\H$ with an inverse problem on a finite-dimensional weighted inner product space. Under general conditions on the discretization of the forward model and prior measure, the posterior mean and covariance of the finite and infinite-dimensional inverse problems are close together; this claim will be formalized and rigorously established in Section \ref{sec:unifiederroranalysis}.

\subsection{Computational Framework}\label{ssec:computationalframework} This subsection overviews the computational framework presented in \cite{bui2013computational}. 
Let $\{\oldphi_1,\oldphi_2,\ldots,\oldphi_n\}$ be a basis for an $n$-dimensional space $\V\subset \H.$ 
For $u=\sum_{i=1}^n u_i\oldphi_i \in \V$, we denote by $\vec{u}=(u_1,\ldots,u_n)^T$ the vector of coefficients of $u$ in this basis.  We will endow the space of coefficients with a weighted inner product that is naturally inherited from the inner product in $\H.$ Specifically, for any $u,v\in \V$, we have
$$
\langle u,v\rangle_{\H}=\vec{u}^TM\vec{v}=\langle \vec{u},\vec{v}\rangle_M,
$$
where the matrix $M=(M_{ij})_{i,j =1}^n$ is given by
$$\label{eq:mass}
M_{ij}=\langle \oldphi_i, \oldphi_j \rangle_\H, \quad i,j\in \{1,\ldots,n\}.
$$
This observation motivates us to introduce the inner product space $\R^n_M$ defined by vector space $\R^n$ and inner product $\langle \cdot, \cdot \rangle_M.$ 
We remark that for both the finite elements and graph-based discretizations introduced in Subsections \ref{sec:FEM discretization} and \ref{sec:graph discretization}, the corresponding basis $\{\oldphi_i\}$ will not be orthonormal; consequently  $M\neq I$ and $\R^n_M$ will not agree with the standard Euclidean inner product space.  \nc 
We now aim to replace the inverse problem \eqref{eq:IPinHilbertSpace} with an inverse problem on  the weighted space $\R^n_M$.

To begin with, we define a discretization map $P:\H \to \R^n_M$ that assigns to $u\in \H$ the vector $\vec{u}^\star \in \R^n_M$ satisfying
\begin{equation}\label{eq:discretizationmap}
Pu=\vec{u}^\star=(u_1^\star,\ldots,u_n^\star)^T=\arg \min_{u_1,\ldots,u_n}\frac{1}{2} \Bigl\|u - \sum_{i=1}^nu_i\oldphi_i \Bigr\|_{\H}.
\end{equation}
In practice, this discretization can be computed by solving the linear system $M\vec{u}^\star=\vec{b}$, where $b_i=\langle u,\oldphi_i\rangle_{\H}$. Conversely, our theoretical analysis will rely on an extension map $P^*:\R^n_M\to \V \subset \H$ that assigns to a vector $\vec{u}\in \R^n_M$ the element $P^*u \in \V$ defined by
\begin{equation}\label{eq:extensionmap}
P^* \vec{u}=\sum_{i=1}^n u_i \oldphi_i.
\end{equation}
One can verify that $\langle P u, \vec{v}\rangle_{M}=\langle u,P^*\vec{v}\rangle_{\H},$ so that $P^*$ is the adjoint of $P,$ as our notation suggests. Similarly, one can verify that the map $P^*P:\H\to \V\subset \H$ is the orthogonal projection onto  $\V,$ so that $PP^*:\R_M^n\to \R_M^n$ is the identity map.

We are ready to introduce the inverse problem on the weighted inner product space $\R^n_M.$ In analogy with \eqref{eq:IPinHilbertSpace}, we seek to recover $\vec{u} \in \R^n_M$ from data $y$ related by
\begin{equation}
    y = F \vec{u} + \eta, \quad \eta\sim \Nc(0,\Gamma),
\end{equation}
where $F: \R^n_M \to \R^{d_y}$ is a discretized forward model, identified with a matrix $F\in \R^{d_y \times n}$. (Here and henceforth we will abuse notation and identify linear maps and matrices without further notice.) The map $F$ should approximate $\F$ in the sense that $\| \F - FP \|_{op}$ is small; we refer to Subsections \ref{ssec:errorforwardFEM} and \ref{ssec:errorforwardgraph} for examples arising from discretization of PDE-constrained forward models in function space. In analogy with \eqref{eq:infinitelikelihood}, the likelihood of observed data $y$ given a coefficient vector $\vec{u}\in \R_M^n$ is given by
$$
\pi_{\text{like}} (y|\vec{u})\propto \exp \left(-\frac{1}{2}\|y-F\vec{u}\|_{\Gamma^{-1}}^2 \right).
$$
Following the analogy with Subsection \ref{ssec:Hilberspacesetting}, we choose a Gaussian prior $\mu_0^n = \Nc( \vec{m}_0, C_0)$ on the space $\R^n_M,$
with Lebesgue density
\begin{align}\label{eq:finitePrior}
    \pi_0(\vec{u})\propto \exp \left( -\frac{1}{2} \bigl\langle \vec{u}-\vec{m}_0, C_0^{-1} (\vec{u} - \vec{m}_0) \bigr\rangle_M \right).
\end{align}
For the prior mean, we take $\vec{m}_0 = Pm_0;$ for the prior covariance, we may take an invertible operator $C_0: \R^n_M \to \R^n_M$ that approximates well $\Cm_0,$ in the sense that $\| \Cm_0 - P^* C_0 P \|_{op}$ is small; we refer to Subsections \ref{ssec:errorcovarianceFEM} and \ref{ssec:errorcovariancegraph} for examples arising from discretization of Mat\'ern-type Gaussian processes.  
Similar to \eqref{eq:covInHilbert}, the prior covariance $C_0$ satisfies, by definition, that
$$ \label{eq:gaussianWeighted}
\langle \vec{v},C_0\vec{w}\rangle_M=\mathbb{E}^{\vec{u} \sim \mu_0^n}\Bigl[\langle \vec{v},(\vec{u}-\vec{m})\rangle_M \bigl\langle (\vec{u}-\vec{m}),\vec{w} \bigr\rangle_M \Bigr], \quad \quad \forall \vec{v}, \vec{w} \in \R^n_M.
$$
While we view $\mu_0^n$ as a Gaussian measure on $\R^n_M,$ it is helpful to notice that as a multivariate Gaussian distribution on $\R^n$ equipped with the standard inner product, we have that $\mu_0^n = \Nc( \vec{m_0}, C_0^E),$ with covariance $C_0^E = C_0 M^{-1}.$ 

Applying Bayes' rule as in \eqref{eq:changemeasureinfinite}, we can express the posterior as a change of measure with respect to the prior:
\begin{align}\label{eq:changemeasurefinite}
    \frac{\mu_{\text{post}}^n}{\mu_0^n}=\frac{1}{Z_h}\pi_{\text{like}}(y|\vec{u}).
\end{align}
Since the space $\R^n_M$ is finite dimensional, we recover the standard Bayes' formula for the posterior distribution with Lebesgue density given by
\begin{align}\label{eq:finitePost}
\pi^n_{\text{post}}(\vec{u}|y)\propto \exp \left(-\frac{1}{2}\|y-F\vec{u}\|_{\Gamma^{-1}}^2 -\frac{1}{2} \langle \vec{u}-\vec{m}_0, C_0^{-1} (\vec{u} - \vec{m}_0) \rangle_M \right).
\end{align}
As in the infinite-dimensional setting, the posterior $\mu_{\text{post}}^n = \Nc(\vec{m}_{\text{post}}, C_{\text{post}})$ is Gaussian with analogous expressions for its mean $\vec{m}_{\text{post}}$ and covariance $C_{\text{post}}$. A note of caution is that in the weighted space $\R^n_M$ a careful distinction must be made between adjoint and matrix transpose. We present a detailed discussion of this distinction in Appendix \ref{sec:adjoint discussion}. In particular, 
if the forward model is given by matrix $F\in \R^{d_y \times n},$ then the adjoint map is given by $F^{\natural}=M^{-1}F^T \in \R^{n \times d_y}.$  Thus,
\begin{align}\label{eq:finiteMean}
    \vec{m}_{\text{post}} &=\vec{m}_0+C_0F^{\natural}(FC_0F^{\natural}+\Gamma)^{-1}(y-F\vec{m}_0),\\
    \label{eq:finiteCov}
    C_\text{post}&=C_0-C_0F^{\natural}(FC_0F^{\natural}+\Gamma)^{-1}FC_0.
\end{align}
One can also view the posterior as a Gaussian measure in the standard Euclidean space, in which case the covariance operator is given by $C_{\text{post}}^E=C_{\text{post}}M^{-1}$. For completeness, we include the expressions for the posterior mean and covariance in Euclidean space written in terms of the Euclidean prior covariance:
\begin{align}\label{eq:euclideanmean}
    \vec{m}_{\text{post}}&=\vec{m}_0+C_0^EF^T(FC_0^EF^T+\Gamma)^{-1}(y-F\vec{m}_0),\\
\label{eq:euclideancovariance}
    C_{\text{post}}^E &=C_0^E-C_0^EF^T(FC_0^EF^T+\Gamma)^{-1}FC_0^E. 
\end{align}
Notice that both pairs of equations \eqref{eq:finiteMean}--\eqref{eq:finiteCov} and \eqref{eq:euclideanmean}--\eqref{eq:euclideancovariance} are analogous to their infinite-dimensional counterparts in \eqref{eq:infiniteMean}--\eqref{eq:infiniteCov}. However, our theory and examples in the next subsection will demonstrate the advantage of working on the weighted inner product space. Specifically, we will show that finite element and graph discretizations of important classes of prior covariance and forward model give discretized quantities $F^\natural$ and $C_0$ that approximate well in operator norm their infinite-dimensional counterparts $\F^*$ and $\Cm_0$; the same would not be true for the Euclidean analogs $F^T$ and $C_0^E.$

\subsection{Inverse Problems in Function Space}\label{ssec:functionspacesetting}
As particular instances of the computational framework overviewed in the previous subsection, here we consider discretization of inverse problems in function space using finite elements and graphs. To illustrate the ideas, we focus on inverse problems with Matérn-type priors and deconvolution forward models, introduced in Examples \ref{ex:prior} and \ref{ex:forward} below. We let $\H = L^2(\Omega)$ be the Hilbert space of square integrable functions on $\Omega$ with the usual inner product $\langle \cdot, \cdot \rangle_{L^2(\Omega)}.$ For finite element discretizations, we take $\Omega = \D$ to be a sufficiently regular domain $\D \subset \R^d,$ while for graph discretizations we take $\Omega = \M$ to be a $d$-dimensional smooth, connected, compact Riemannian manifold embedded in $\R^{D}.$ These choices of domain $\Omega$ reflect popular settings for finite element and graph-based discretizations in Bayesian inverse problems.

\begin{table}[H]
	\begin{center}
		\begin{tabular}{ | c | c |c | c |}
			\hline
			Domain & Basis  & Discretization   & Error bounds  \\ \hline
		 $\D$& Lagrange finite elements &  Subsection \ref{sec:FEM discretization}  & Subsection \ref{ssec:errorFEM} \\ \hline
			 $ \M$ & Random geometric graphs  & Subsection \ref{sec:graph discretization}    & Subsection \ref{ssec:errorgraph}   \\ \hline
		\end{tabular}
		\caption{Roadmap of the discretizations analyzed in this paper as particular examples of the general computational framework introduced in Subsection \ref{ssec:computationalframework}.}		
	\end{center}
\end{table}
We next introduce our examples of  prior and forward models, followed by a discussion of their finite element and graph discretizations, both of which will be analyzed in a unified way in Section \ref{sec:unifiederroranalysis}. 

\begin{example}[Matérn-Type Prior]\label{ex:prior}
We will consider Gaussian priors  $\mu_0=\Nc(m_0,\mathcal{C}_0)$  with Mat\'ern covariance operator $\Cm_0$ and mean
    $m_0$ in the Cameron-Martin space $\emph{Im}(\Cm_0^{1/2})$. Specifically, for positive integer $\alpha,$ we define $\Cm_0 = \mathcal{A}^{-\alpha},$ where  $\mathcal{A}^{-1}:L^2(\Omega)\to H^2(\Omega)$ is the solution operator for the following elliptic PDE in weak form:
\begin{align}\label{eq:ellipticPDE}
 \int_{\Omega}\Theta \nabla u \cdot \nabla p + \int_{\Omega} bup =\int_{\Omega} f p\, ,  \quad \quad \forall p\in H^1(\Omega).
\end{align}
 That is, for $f\in L^2(\Omega)$, $\mathcal{A}^{-1}f=u$, where $u$ solves \eqref{eq:ellipticPDE}. Here and below, integrals are with respect to the Lebesgue measure if $\Omega = \D$ and with respect to the Riemannian volume form if $\Omega = \M.$
In the case $\Omega = \D,$ we assume for concreteness Dirichlet boundary conditions and take $\H^1(\Omega) \equiv H_0^1(\D).$ In the case $\Omega = \M,$ we take $\H^1(\Omega) \equiv H^1(\M).$
 We emphasize that we view $\mathcal{A}^{-1}$ as a map from the right-hand side of the elliptic PDE to the solution. To ensure that a solution to the PDE exists, is unique, and is sufficiently regular, the coefficients $\Theta$ and $b$ must be positive and sufficiently smooth; precise assumptions will be given in our theorem statements. Choosing these coefficients to be spatially varying allows one to encode additional prior information about the unknown, such as anisotropic correlations. The parameter $\alpha$ in the covariance $\Cm_0 = \mathcal{A}^{-\alpha}$ controls the regularity of prior draws. For simplicity, we restrict our attention to positive integer-valued $\alpha$, but extensions to fractional values are possible \cite{bolin2020rational,antil2022efficient}.
 If the domain $\Omega$ is sufficiently regular, $\alpha$ is chosen to be large enough, and $\Theta,b$ are positive and sufficiently smooth, then the covariance operator $\mathcal{C}_0$ is trace-class on $L^2(\Omega)$ \cite{stuart2010inverse}.
\end{example}

\begin{example}[Deconvolution Forward Model]\label{ex:forward} 
Consider the heat equation:
\begin{align}\label{eq:heatEqn}
    \begin{cases}
    v_t(x,t)=\Delta v(x,t), &  x\in \Omega, \, t>0,\\
    v(x,0)=u(x), & x\in \Omega.
    \end{cases}
\end{align}
In the case $\Omega = \D,$ we supplement \eqref{eq:heatEqn} with Dirichlet boundary conditions $ v  (x,t)=0, \, x\in \partial \D.$
We let $\G:L^2(\Omega)\to L^2(\Omega)$ map an input function $u\in L^2(\Omega)$ to the solution to \eqref{eq:heatEqn} at time $t=1.$  
It is well known that the heat equation admits a variational characterization, which will lend itself more naturally to the numerical approximations introduced in Subsections \ref{sec:FEM discretization} and \ref{sec:graph discretization}.
We consider a linear observation model given by local averages of the solution over a Euclidean ball of radius $\delta>0$, which approximate pointwise observations:
\begin{align}\label{eq:local averages}
    \mathcal{O}v=\left[v^{\delta}(x_1),\ldots,v^{\delta}(x_{d_y}) \right]^T, \quad v^{\delta}(x_i):=\int_{B_{\delta}(x_i) \cap \Omega}v(x,1).
\end{align}
We then write our forward model as
\begin{align}\label{eq:forward model}
    \F:=\mathcal{O}\circ\G.
\end{align}
The utility of considering the local average observation map is that it is a bounded linear map from $L^2(\Omega)$ to $\R^{d_y}$, whereas pointwise observations are only bounded from subspaces of $L^2(\Omega)$ with enough regularity to guarantee almost everywhere continuity. While the forward map given by the heat equation is sufficiently smoothing to guarantee the solution is continuous, the graph-based approximations to the forward map that we will consider may only converge in $L^2(\Omega)$. In those cases, it will be more convenient to consider the local average observations, as in \cite{garcia2018continuum}. 
\end{example}

\subsubsection{Finite Element Discretizations} \label{sec:FEM discretization}
The use of finite elements within the computational framework overviewed in Subsection \ref{ssec:computationalframework} was proposed and numerically investigated in  \cite{bui2013computational}. Here, the domain $\Omega$ is an open, bounded, and sufficiently regular subset $\D\subset \R^d.$ We let  $\V$  be a finite element discretization space with basis functions denoted by $\{\oldphi_j\}_{j=1}^n$. For concreteness, we restrict our attention to linear Lagrange polynomial basis vectors. These basis functions correspond to nodal points $\{x_j\}_{j=1}^n\in \R^d$ such that $\oldphi_i(x_j)=\delta_{ij}$ for $i,j\in \{1,\ldots,n\}.$ The domain $\D$ is partitioned by a mesh into elements $e_1,e_2,\ldots$ with $h_i:=\text{diameter}(e_i)$ and $h:=\max h_i.$ Two canonical families of Lagrange elements include $d$-simplexes and $d$-hypercubes. A $d$-simplex is the region $e\subset\R^d$ determined by $d+1$ distinct points (for $d=1$ this is an interval and for $d=2$ a triangle). A $d$-hypercube is a product of $d$ intervals on the real line, $e = [a_1,b_1]\times \cdots \times [a_d,b_d]\subset \R^d$. As the mesh is refined (that is, as we take $h\to 0$), the dimension of $\V$ grows. To emphasize that we are considering a family of subspaces indexed by the mesh width parameter $h$, we adopt the notation $\V=\V_h$, as is typical in the finite element literature. \color{black} The theory we will present in Subsection \ref{ssec:errorFEM} holds for uniform (and more generally for quasi-uniform) mesh refinements. 
\begin{example}[Finite Element Approximation of Matérn Covariance]
For an operator $\mathcal{B}:L^2(\D)\to L^2(\D)$, one can consider the action of this operator restricted to the subspace  $\V_h$, given by $\mathcal{B}_h=P^*P\mathcal{B}P^*P:\V_h\to \V_h$, which is a finite-dimensional linear map. The matrix representation $B$ of this operator must satisfy
$$
\int_{\D}\oldphi_i \mathcal{B} \oldphi_j \, dx=\langle \vec{e}_i,B\vec{e}_j\rangle_M,
$$
where $\vec{e}_i$ is the coordinate vector that corresponds to $\oldphi_i.$ This implies that $B:\R_M^n\to \R_M^n$ can be written explicitly as
\begin{align}
    B=M^{-1}K,
\end{align}
where 
\begin{align}
    K_{ij}=\int_{\D}\oldphi_i \mathcal{B} \oldphi_j \, dx, \quad i,j\in \{1,\ldots,n\}.
\end{align}
In particular, when representing the differential operator $\mathcal{A}$ defined in \eqref{eq:ellipticPDE} we get that $K$ is given by the stiffness matrix with entries
\begin{align}\label{eq:stiffness}
    K_{ij}=\int_{\D}\Theta(x) \nabla \oldphi_i(x) \cdot \nabla \oldphi_j(x)+b(x)\oldphi_i(x)\oldphi_j(x) \, dx, \quad i,j\in \{1,\ldots,n\}.
\end{align}
One can verify that both $A=M^{-1}K$ and $A^{-1}=K^{-1}M$ are self-adjoint in the weighted inner product space. Recall that the coefficients of the Galerkin approximation to the elliptic PDE \eqref{eq:ellipticPDE} are given by solving the linear system 
\begin{align*}
    K\vec{u}=\vec{b},
\end{align*}
where $b_i=\int_{\D} f\oldphi_i \, dx$ for $1\leq i\leq n,$ see \cite{oden2012introduction}. Since we have that $\vec{b}=M\vec{f}$ (where $\vec{f}=Pf$), we see that $A^{-1}\vec{f}=K^{-1}M\vec{f}$ is precisely the Galerkin approximation to $\mathcal{A}^{-1}f$.  We will show in Subsection \ref{ssec:errorcovarianceFEM} that the matrix $C_0:\R^n_M\to \R^n_M$ given by $C_0=A^{-\alpha}=(K^{-1}M)^\alpha$ approximates well the continuum Mat\'ern covariance operator $\Cm_0.$    
\end{example}

\begin{example}[Finite Element Approximation of Heat Forward Model]
The variational formulation of the heat equation \eqref{eq:heatEqn} naturally leads to a \textit{semidiscrete} Galerkin approximation
\begin{align}
    v_h(x,t)=\sum_{i=1}^nA^i(t)\oldphi_i(x),
\end{align}
which is required to satisfy that, for all $\oldphi\in  \V_h,$
\begin{alignat}{2}\label{eq:semidiscrete galerkin}
&\int_{\D} \frac{\partial v_h}{\partial t} \oldphi(x) \, dx +\int_{\D} \nabla v_h \cdot \nabla \oldphi(x) \, dx=0, \quad  \quad  &&t\in (0,1],\\
&\int_{\D}v_h(x,0)\oldphi(x) \, dx =\int_{D}u(x)\oldphi(x) \, dx, \quad  &&t=0.
\end{alignat}
This leads to a system of differential equations for the time-dependent coefficients $A^i(t):$
\begin{alignat}{2} \label{eq:semidiscrete ODE}
    &\sum_{j=1}^nM_{ij}\frac{\partial}{\partial t}A^j(t)+K_{ij}A^j(t)=0, \quad \quad &i=1,\ldots,n, \\
    &\sum_{j=1}^n M_{ij}A^j(0)=g_i, & i=1,\ldots,n,
\end{alignat}
 where $M_{ij}=\langle \oldphi_i, \oldphi_j\rangle_{L^2(\D)}$, $K_{ij}=\langle \oldphi_i, -\Delta \oldphi_j\rangle_{L^2(\D)}$, and $g_i=\langle u,\oldphi_i\rangle_{L^2(\D)}.$ Note that $\vec{A}(t)=\left(A^1(t),\ldots,A^n(t)\right)^T\in \R^n_M.$ Given the eigenpairs of  $K$ and $M$,  this system of differential equations can  be solved analytically. However, computing the eigenpairs may be prohibitively expensive in practice, so the solution is often approximated by a time-discretization of the semidiscrete problem. One such classical time discretization of \eqref{eq:semidiscrete galerkin} is the Crank-Nicolson method: find $v_h^k\in V_h,$ for $k=0,\ldots,1/\Delta t$ such that, for every $\oldphi\in \V_h,$
\begin{alignat}{2} \label{eq:time discrete galerkin}
&\int_{\D}\frac{v_h^k(x)-v_h^{k-1}(x)}{\Delta t}\oldphi(x) \, dx+\int_{\D}\nabla \left( \frac{v_h^k(x)+v_h^{k-1}(x)}{2}\right)\cdot \nabla \oldphi(x) \, dx =0, \quad k=1,\ldots,1/\Delta t,\\
&\int_{\D} v_h^0(x) \oldphi(x) \, dx =\int_{\D}u(x)\oldphi(x)\, dx.
\end{alignat}
This leads to a system of linear equations analogous to \eqref{eq:semidiscrete ODE} for the coefficients $A^j_k\approx A^j(k\Delta t)$ for $k=0,\ldots, 1/\Delta t-1$:

\begin{alignat*}{2}
        \frac{M+K\Delta t}{2}\vec{A}_{k+1}&=\frac{M-K\Delta t}{2}\vec{A}_{k}, \quad \quad  &&k=1,\ldots,1/\Delta t-1,\\
        M\vec{A}_0&=\vec{g},  \quad &&k=0.
\end{alignat*}
The second equation is simply the requirement that $\vec{A}_0$ be given by the orthogonal projection onto $\V_h$ as $\vec{A}_0=Pu.$ We write the mapping from the coefficients of the initial condition to the coefficients of the solution at $t=1$ as $G:\R^n_M\to \R^n_M$, where
\begin{align}
    G=\left(\left(\frac{M+K\Delta t}{2}\right)^{-1}\left(\frac{M-K\Delta t}{2}\right) \right)^{\frac{1}{\Delta t}}.
\end{align} 
We denote by $O:\R_M^n\to \R^{d_y}$ the map from coefficients to observations. Our discretized forward model $F:\R^n_M\to \R^{d_y}$ is then given by
\begin{align}\label{eq:discrete FEM forward map}
F:=OG.
\end{align}
We remark that the observation map $\mathcal{O}$ applied to the numerical solution to the heat equation is an integral of a piecewise linear function. Such an integral can be computed exactly via quadrature methods, which are typically tractable in low dimensions (e.g. $d=1,2,3$). Since finite element methods are seldom implemented in higher dimensions in practice, we make the simplifying assumption that the observation map can be evaluated exactly for functions in $\V_h$. 
\end{example}

\subsubsection{Graph-Based Discretizations}\label{sec:graph discretization}
We will now see how the graph-based discretizations considered in \cite{garcia2018continuum} fit into the above framework.  Here we let $\Omega=\M$ be a $d$-dimensional smooth, connected, compact, Riemannian manifold embedded in $\R^{D}.$ We assume further that $\M$ has bounded sectional curvature and Riemannian metric inherited from $\R^{D}$. Without loss of generality, we let $\M$ be normalized such that $vol(\M)=1.$ We assume we have access to a point cloud $\M_n=\{x_1,\ldots,x_n\}\subset \M$ of $n$ samples from the uniform distribution $\gamma$ on $\M$. We let $\gamma_n=\frac{1}{n}\sum_{i=1}^n\delta_{x_i}$ denote the empirical measure of $\M_n$. We will show that this setting naturally fits into our computational framework with basis vectors of the form $\oldphi_i(x)=\mathbb{1}_{U_i}(x)$, where the sets $\{U_i\}_{i=1}^n\subset \M$ form a partition of $\M$. We recall the following result from \cite{garcia2020error}:
\begin{proposition}[Existence of Transport Maps]
\label{thm:transportMaps}
    There is a constant $c$ such that, with probability one, there exists a sequence of transport maps $T_n:\M\to \M_n$ such that $\gamma_n=T_{n\sharp}\gamma$ and
    \begin{align}
        \lim_{n\to \infty}\sup \frac{n^{1/d}\sup_{x\in \M}d_{\M}(x,T_n(x))}{(\log n)^{c_d}}\leq c,
    \end{align}
    where $c_d=3/4$ if $d=2$ and $c_d=1/d$ otherwise.
\end{proposition}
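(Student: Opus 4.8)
The statement is a consequence of a quantitative bound on the $\infty$-transportation (Wasserstein) distance $W_\infty(\gamma,\gamma_n) := \inf_{T_{\sharp}\gamma = \gamma_n}\sup_{x\in\M}d_\M(x,T(x))$, combined with the Borel--Cantelli lemma. The plan is to exhibit, for each $n$, a good transport map and to show it satisfies the desired displacement bound on an event of probability at least $1-Cn^{-2}$. To construct the map, partition $\M$ into $n$ cells $\{U_i\}_{i=1}^n$, each of Riemannian volume $1/n$, in such a way that $x_i\in U_i$ for every $i$; then $T_n:=\sum_i x_i\,\mathbb{1}_{U_i}$ pushes $\gamma$ forward to $\gamma_n$ and satisfies $\sup_{x\in\M}d_\M(x,T_n(x))\le\max_i\mathrm{diam}(U_i)$. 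Hence it suffices to produce such an equal-mass partition with $\max_i\mathrm{diam}(U_i)\lesssim n^{-1/d}(\log n)^{c_d}$ with the stated probability; this is essentially the claim that $W_\infty(\gamma,\gamma_n)$ has this order, together with a measurable selection of the cells (note that the cells $\{U_i\}$ produced here are exactly those used later as supports of the graph basis functions).

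First I would reduce to a Euclidean cube. Since $\M$ is compact with bounded sectional curvature, it admits a finite atlas of charts each of which is uniformly bi-Lipschitz onto a cube, together with a partition of $\M$ subordinate to this atlas into finitely many pieces of comparable volume. Conditioning on the (binomially distributed and sharply concentrated) number of sample points falling in each piece, the problem splits into finitely many essentially independent sub-problems of matching $m\asymp n$ i.i.d.\ uniform points to an equal-mass partition of $[0,1]^d$; the chart distortion affects only the constants. So it is enough to prove the cube version of the bound.

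On the cube I would use the classical recursive dyadic matching argument (Ajtai--Koml\'os--Tusn\'ady and Leighton--Shor, in the streamlined form of Shor--Yukich). At dyadic scale $2^{-k}$ the cube decomposes into $2^{dk}$ subcubes, each of expected count $n2^{-dk}$; Bernstein's inequality together with a union bound over all scales $k\le d^{-1}\log_2 n$ and all subcubes shows that, off an event of probability $\le n^{-2}$, the count in every such subcube deviates from its mean by at most $O\bigl(\sqrt{n2^{-dk}\log n}+\log n\bigr)$. One then builds the equal-mass partition (equivalently, the transport plan) by cascading mass corrections from coarse to fine scales, the displacement injected at scale $k$ being $\lesssim 2^{-k}$ times the relative imbalance at that scale. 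Summing over scales, for $d\ge 3$ the resulting geometric series is dominated by the finest scale $2^{-k}\sim n^{-1/d}$ and yields displacement $\lesssim n^{-1/d}(\log n)^{1/d}$; for $d=2$ all scales contribute on the same order, so a direct summation only gives a $\log n$ factor, and recovering the sharp exponent $(\log n)^{3/4}$ requires the finer Leighton--Shor grid-matching estimate for the $L^\infty$ cost (e.g.\ via Talagrand's majorizing-measure / chaining proof).

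Finally, I would transfer the cube estimate back to $\M$ through the bi-Lipschitz charts and take a union bound over the finitely many pieces, obtaining for each $n$ an event $E_n$ with $\Prob(E_n^c)\le Cn^{-2}$ on which $\sup_{x\in\M}d_\M(x,T_n(x))\le c\,n^{-1/d}(\log n)^{c_d}$. Since $\sum_n\Prob(E_n^c)<\infty$, Borel--Cantelli implies that almost surely $E_n$ holds for all large $n$, which is precisely the claimed bound on $\frac{n^{1/d}\sup_{x\in\M}d_\M(x,T_n(x))}{(\log n)^{c_d}}$. The main obstacle is the sharp logarithmic exponent in dimension two: the passage from a naive $\log n$ to $(\log n)^{3/4}$ is the delicate Leighton--Shor ingredient, whereas the dimensions $d\ne 2$ and the manifold-to-cube reduction are comparatively routine, the latter needing only the uniform bi-Lipschitz charts guaranteed by bounded geometry.
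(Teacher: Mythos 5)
The paper does not actually prove this proposition: it is imported verbatim from \cite{garcia2020error}, and the argument there is exactly the one you outline --- reduction to the unit cube via uniformly bi-Lipschitz charts of the bounded-geometry manifold, the Ajtai--Koml\'os--Tusn\'ady/Shor--Yukich dyadic matching bounds with the Leighton--Shor refinement supplying the $(\log n)^{3/4}$ exponent in $d=2$, and a Borel--Cantelli argument over events of summable probability. Your sketch is a correct reconstruction of that proof, including the accurate identification of the two-dimensional logarithmic exponent as the only genuinely delicate ingredient.
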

In the above, $\gamma_n=T_{n\sharp}\gamma$  indicates that $\gamma(T_n^{-1}(U))=\gamma_n(U)$ for all measurable $U$, and $d_{\M}$ denotes the geodesic distance. We denote the pre-image of each singleton as $U_i=T_n^{-1}(\{x_i\})$. The sets $U_i$ form a partition of $\M$ and by the measure preserving property of $T_n$ all have mass $\gamma(U_i)=1/n.$ It follows from Proposition \ref{thm:transportMaps} that we have $U_i\subset B_{\M}(x_i,\eps_n)$, where 
$$
\eps_n:= \sup_{x\in \M}d_{\M}(x,T_n(x))\lesssim \frac{(\log n)^{c_d}}{n^{1/d}}.
$$
These sets will be used to define our basis functions: $\oldphi_i(x)=\mathbb{1}_{U_i}(x)$, which correspond to locally constant interpolations of functions in $L^2(\M).$ We can then interpret vectors of function values on the point cloud $\M_n$ as coefficients vectors in $\R_M^n$, where the corresponding mass matrix is $M=\frac{1}{n}I_n.$

\begin{example}[Graph Approximation of Matérn Covariance]
In the manifold setting, we opt to approximate the elliptic differential operator via the graph Laplacian. We define a symmetric weight matrix $W\in \R^{n\times n}$ that has entries $W_{ij}\geq 0$ corresponding to closeness between points in $\M_n$ by 
\begin{align}
    \label{weights}
    W_{ij}=\frac{2(d+2)}{n\nu_d h_n^{d+2}}\mathbb{1}_{\{\|x_i-x_j\|_2<h_n\}}.
\end{align}
Here $\nu_d$ denotes the volume of the $d$-dimensional unit ball, and $h_n$ denotes the graph connectivity. Defining the degree matrix $D =\emph{diag}(d_1,\ldots,d_n)$ where $d_i=\sum_{j=1}^nW_{ij},$ the unnormalized graph Laplacian is given by $\Delta_n^{un}=D-W.$ This operator can be shown to converge pointwise and spectrally to $-\Delta$ (the Laplace-Beltrami operator) in the manifold setting \cite{belkin2005towards,burago2015graph}. This construction can also be generalized to elliptic operators with spatially varying coefficients. To do so, we define a nonstationary weight matrix $\tilde{W}$ with entries
\begin{align}
\tilde{W}_{ij}=W_{ij}\sqrt{\Theta(x_i)\Theta(x_j)}=\frac{2(d+2)}{n\nu_d h_n^{d+2}}\mathbb{1}_{\{\|x_i-x_j\|_2<h_n\}}\sqrt{\Theta(x_i)\Theta(x_j)} \,.
\end{align}
We then define $\Delta^{\Theta}_n=\tilde{D}-\tilde{W}$ where $\tilde{D}_{ii}=\sum_{j=1}^n\tilde{W}_{ij}$. Denoting $B_n=\emph{diag}\bigl(b(x_1),\ldots,b(x_n)\bigr)$, we approximate the differential operator $\mathcal{A}$ as the matrix $A:\R_M^n\to \R_M^n$ given by
\begin{align}\label{eq:graphApprox}
A=\Delta^{\Theta}_n+B_n.
\end{align}
Both $A$ and $A^{-1}$ are symmetric and self-adjoint in the weighted inner product space. The matrix $C_0:\R_M^n\to \R_M^n$ given by $C_0=A^{-\alpha}$ will be shown to approximate well the continuum Mat\'ern covariance operator $\Cm_0$ in Subsection \ref{ssec:errorcovariancegraph}.
\end{example}

\begin{example}[Graph Approximation of Heat Forward Model]
For our graph-based approximation of the heat forward model, we consider the differential equation
\begin{align}
    \begin{cases}\label{eq:graph heat equation}
        \frac{\partial v_n}{\partial t} = -\Delta_n^{un}v_n, & t>0,\\
        v_n(0)=u_n.
    \end{cases}
\end{align}
We let $G_n:\R_M^n\to \R_M^n$ map $u_n\in \R_M^n$ to the solution $v_n(1)$ to \eqref{eq:graph heat equation}. Given the eigenpairs of the graph Laplacian, denoted $\bigl\{(\lambda_n^{(i)},\psi_n^{(i)} \bigr\}_{i=1}^n$, we can explicitly write the solution to this ODE:
\begin{align}
    G_n(u_n)=\sum_{i=1}^n\exp\left(-\lambda_n^{(i)}\right)\langle u_n,\psi_n^{(i)}\rangle_M\psi_n^{(i)}.
\end{align}
The computations presented above for both the prior covariance discretization and the heat forward model do not require the explicit computation of the sets $U_i$ that partition $\M$ and define our basis vectors. In order to exactly compute the observation map defined in \eqref{eq:local averages}, we would need to exactly know the sets $U_i$. To circumvent this challenge, we instead  consider a surrogate observation map $\mathcal{O}_n:\R_M^n\to \R^{d_y}$ that only requires knowledge of the point cloud $\M_n:$
\begin{align}\label{eq:surrogate observations}
    \mathcal{O}_nv_n=\left[\bar{v}^{\delta}_{n,1},\ldots ,\bar{v}^{\delta}_{n,1} \right]^T, \quad \bar{v}^{\delta}_{n,j}:=\frac{1}{n}\sum_{k:x_k\in B_{\delta}(x_j)\cap \M_n}[v_n(1)]_k, 
\end{align}
for $1\leq j \leq d_y,$ where $[v_n(1)]_k$ denotes the $k^{th}$ entry of the vector $v_n$.  For the purposes of our analysis, it will be necessary to make the assumption on the boundary of $\B_\delta(x_k)\cap \M$. In particular, we assume that $\text{length}\left(\partial (\B_\delta(x_k)\cap \M) \right)=C_{\delta,j}$ for $1\leq j \leq d_y,$ where $C_{\delta,j}$ is a constant that depends on $\M$, $\delta$ and the observation point $x_j$.

Our discretized forward model $F:\R_M^n\to \R^{d_y}$ is then given by
\begin{align}\label{eq:graph forward model}
    F:=\mathcal{O}_nG_n.
\end{align}
\end{example}

\section{Error Analysis}\label{sec:unifiederroranalysis}
This section analyzes the computational framework introduced in Section \ref{sec: formulation}. We start in Subsection \ref{ssec:generalerroranalysis} by establishing error bounds on the infinite-dimensional posterior mean and covariance based on a general assumption. Next, we verify this assumption in the function space setting of Subsection \ref{ssec:functionspacesetting}, studying finite element discretizations (Subsection \ref{ssec:errorFEM}) and graph-based methods (Subsection \ref{ssec:errorgraph}). 
\subsection{Error Analysis: General Computational Framework}\label{ssec:generalerroranalysis}
The main results of this subsection, Theorems \ref{thm:posterior mean Theorem} and Theorem \ref{thm:posterior covariance Theorem} below, quantify the errors in the mean and covariance approximations 
\begin{align}\label{eq:meanErr}
    \eps_m&=\|m_{\text{post}}-P^*\vec{m}_{\text{post}}\|_{\H},\\
    \label{eq:covErr}
    \eps_C&=\|\Cm_{\text{post}}-P^*C_{\text{post}}P\|_{op},
\end{align}
where we recall that $P: \H \to \R_M^n$ is the discretization map defined in \eqref{eq:discretizationmap} and $P^*: \R_M^n \to \V \subset \H$ is the extension map defined in \eqref{eq:extensionmap}. Our bounds on $\eps_m$ and $\eps_C$ will rely on a general assumption on (i) the closeness of the covariance operator $\Cm_0: \H \to \H$ and its discrete approximation $C_0: \R^n_M \to \R^n_M,$ encoded in an assumption on the operator norm $\| \Cm_0 - P^* C_0 P \|_{op};$ (ii) the closeness of the forward model $\F: \H \to \R^{d_y}$ and its approximation $F: \R^n_M\to \R^{d_y},$ encoded in an assumption on the operator norm $\|\F - FP\|_{op};$ and (iii) the closeness of the finite-dimensional space $\V$ and the infinite-dimensional space $\H,$ encoded in an assumption that the orthogonal projection operator $P^*P: \H \to \V$ is close to the identity.

\begin{assumption}\label{assumption:errors}
The following holds:
\begin{enumerate}[leftmargin=.3in]
    \item[(i)] \emph{(Error in Approximation of Prior Covariance.)} There is a constant $r_1 \in \R$ and a function $\psi_1: \N \to \R$ with $\lim_{n\to \infty} \psi_1(n)=0$ such that $\|\Cm_0-P^*C_0P\|_{op}\leq r_1\psi_1(n).$ 
    \item[(ii)] \emph{(Error in Approximation of Forward Model.)} There is a constant $r_2 \in \R$ and a function $\psi_2: \N \to \R$ with $\lim_{n\to \infty} \psi_2(n)=0$ such that $\|\F-FP\|_{op}\leq r_2 \psi_2(n).$ 
    \item[(iii)] \emph{(Error in Orthogonal Projection.)} 
        There is a Hilbert space $\H'$ continuously embedded in $\H,$ a constant $r_3 \in \R$ and a function $\psi_3 :\N \to \R$ with $\lim_{n\to \infty} \psi_3(n)=0$ such that, for every $u \in \H',$  $\|(I-P^*P)u\|_{\H}\leq r_3\psi_3(n)\|u\|_{\H'}.$  
\end{enumerate}
\end{assumption}
   In Subsections \ref{ssec:errorFEM} and \ref{ssec:errorgraph}, we will verify Assumption \ref{assumption:errors} for the important examples of finite element and graph discretization spaces, Mat\'ern-type prior covariance operators, and deconvolution forward models.  In these examples, we will take the space $\H'$ in Assumption \ref{assumption:errors} (iii) to be an appropriate Sobolev space contained in the Cameron-Martin space $\text{Im}(\Cm_0^{1/2})$.

To establish upper bounds on the mean and covariance approximation errors \eqref{eq:meanErr} and \eqref{eq:covErr}, we will first prove a lemma similar to those in \cite{ghattas2022non} and \cite{kwiatkowski2015convergence}. Following the approach in these papers, we introduce the Kalman gain operator
\begin{align}\label{eq:kalman}
    \msK:  S_+ \times B(\mathcal{H},\R^{d_y}) \to B(\R^{d_y},\mathcal{H}), \quad \quad  \msK(\Cm, \F)=\Cm\F^*(\F\Cm\F^*+\Gamma)^{-1}.
\end{align}
Unlike \cite{ghattas2022non,kwiatkowski2015convergence}, we view the Kalman gain operator as a function of both the forward map and the covariance, not just the covariance. 
The following lemma shows that the Kalman gain operator is pointwise continuous.
\begin{lemma}[Continuity of Kalman Gain Update]
\label{lemma:continuity}
Let $\msK$ be the Kalman gain operator defined in \eqref{eq:kalman}. Let $\Cm_1,\Cm_2\in S_+$, let $\F_1,\F_2\in B(\mathcal{H},\R^{d_y}),$ and let $\Gamma\in S_{++}^{d_y}.$ The following holds: 
\begin{align}
    \|\msK(\Cm_1,\F_1)-\msK(\Cm_2,\F_2)\|_{op}\leq c_1\|\Cm_1-\Cm_2\|_{op} + c_2\|\F_1-\F_2\|_{op},
\end{align}
where
\begin{align}\label{eq:constantsc1c2}
\begin{split}
c_1&=\|\Gamma^{-1}\|_{op}\|\F_2\|_{op}+\|\Gamma^{-1}\|_{op}^2\|\Cm_1\|_{op}\|\F_1\|_{op}\|\F_2\|_{op}^2, \\ 
c_2&=\|\Gamma^{-1}\|_{op}\|\Cm_1\|_{op}+\|\Gamma^{-1}\|_{op}^2\|\Cm_1\|_{op}^2\|\F_1\|_{op}^2+\|\Gamma^{-1}\|_{op}\|\Cm_1\|_{op}^2\|\F_1\|_{op}\|\F_2\|_{op}.
\end{split}
\end{align}
\begin{proof}
    Applying Lemma A.8 from \cite{ghattas2022non} (which was stated for matrices, but also holds in our infinite-dimensional setting), we get that
    \begin{align*}
        \|\msK(\F_1,\Cm_1)-\msK(\F_2,\Cm_2)\|_{op} &\leq \|\Gamma^{-1}\|_{op}\|\Cm_1\F_1^*-\Cm_2\F_2^*\|_{op} \\
        &+\|\Gamma^{-1}\|_{op}^2\|\Cm_1\F_1^*\|_{op}\|\F_1\Cm_1\F_1^*-\F_2\Cm_2\F_2^*\|_{op}.
    \end{align*}
    We can then bound each of these terms with the triangle inequality:
    \begin{align*}
        \|\Cm_1\F_1^*-\Cm_2\F_2^*\|_{op}&=\|\Cm_1\F_1^*-\Cm_1\F_2^*+\Cm_1\F_2^*-\Cm_2\F_2^*\|_{op} \\
        &\leq \|\Cm_1\|_{op}\|\F_1-\F_2\|_{op}+\|\F_2\|_{op}\|\Cm_1-\Cm_2\|_{op},
    \end{align*}
    and
    \begin{align*}
        \|\F_1\Cm_1\F_1^*-\F_2\Cm_2\F_2^*\|_{op}&=\|\F_1\Cm_1\F_1^*-\F_2\Cm_1\F_1^*+\F_2\Cm_1\F_1^*-\F_2\Cm_2\F_2^*\|_{op} \\
        &\leq \|\Cm_1\F_1^*\|_{op}\|\F_1-\F_2\|_{op}+\|\F_2\|_{op}\|\Cm_1\F_1^*-\Cm_2\F_2^*\|_{op}.
    \end{align*}
    Combining the previous three displayed inequalities gives the desired result.
\end{proof}
    
\end{lemma}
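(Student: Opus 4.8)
The plan is to invoke the cited matrix identity from \cite{ghattas2022non} (their Lemma A.8), which expresses the difference of two Kalman gains $\msK(\Cm_1,\F_1) - \msK(\Cm_2,\F_2)$ as a sum of terms, each controlled by $\|\Cm_1\F_1^* - \Cm_2\F_2^*\|_{op}$ and $\|\F_1\Cm_1\F_1^* - \F_2\Cm_2\F_2^*\|_{op}$, with prefactors involving $\|\Gamma^{-1}\|_{op}$. The first thing to check is that this identity, although originally stated for finite-dimensional matrices, carries over verbatim to bounded operators between a separable Hilbert space and $\R^{d_y}$: the only ingredients are the resolvent identity $X^{-1} - Y^{-1} = X^{-1}(Y-X)Y^{-1}$ applied to $X = \F_1\Cm_1\F_1^* + \Gamma$ and $Y = \F_2\Cm_2\F_2^* + \Gamma$ (both invertible on $\R^{d_y}$ since $\Gamma \in S_{++}^{d_y}$ and $\F_i\Cm_i\F_i^* \in S_+^{d_y}$), submultiplicativity of the operator norm, and the bound $\|(\F\Cm\F^*+\Gamma)^{-1}\|_{op} \le \|\Gamma^{-1}\|_{op}$, which follows because $\F\Cm\F^*$ is positive semidefinite. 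None of these steps use finite-dimensionality of the domain.

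Given that starting inequality, the remaining work is purely algebraic: I would expand the two ``mixed difference'' terms by adding and subtracting a common intermediate operator. For $\Cm_1\F_1^* - \Cm_2\F_2^*$, insert $\pm\,\Cm_1\F_2^*$ to get $\Cm_1(\F_1^* - \F_2^*) + (\Cm_1 - \Cm_2)\F_2^*$, and bound by $\|\Cm_1\|_{op}\|\F_1-\F_2\|_{op} + \|\F_2\|_{op}\|\Cm_1-\Cm_2\|_{op}$ (using $\|\F^*\|_{op} = \|\F\|_{op}$). For $\F_1\Cm_1\F_1^* - \F_2\Cm_2\F_2^*$, insert $\pm\,\F_2\Cm_1\F_1^*$ to get $(\F_1-\F_2)\Cm_1\F_1^* + \F_2(\Cm_1\F_1^* - \Cm_2\F_2^*)$, then reuse the bound just derived for the second factor. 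Substituting both expansions into the \cite{ghattas2022non} inequality and collecting the coefficients of $\|\Cm_1-\Cm_2\|_{op}$ and $\|\F_1-\F_2\|_{op}$ separately yields exactly the constants $c_1$ and $c_2$ in \eqref{eq:constantsc1c2}; one also uses $\|\Cm_1\F_1^*\|_{op} \le \|\Cm_1\|_{op}\|\F_1\|_{op}$ to simplify the prefactor coming from the second term of the starting inequality.

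There is essentially no serious obstacle here — the lemma is a continuity/perturbation estimate whose proof is a bookkeeping exercise once the right telescoping insertions are chosen. The only point requiring genuine care is the asymmetry in how the intermediate terms are inserted: one must consistently ``freeze'' $\Cm_1$ and $\F_1$ on one side and $\Cm_2, \F_2$ on the other so that every residual factor is either a bounded-norm quantity (like $\|\Cm_1\|_{op}$ or $\|\F_2\|_{op}$) or one of the two target differences, and never a difference multiplied by another difference. A secondary subtlety, worth a one-line remark rather than a real argument, is the implicit claim that $\msK$ indeed maps into $B(\R^{d_y},\H)$ — i.e. that the Kalman gain is a well-defined bounded operator — which is immediate since $(\F\Cm\F^*+\Gamma)^{-1}$ acts on the finite-dimensional space $\R^{d_y}$ and $\Cm\F^*$ is bounded.
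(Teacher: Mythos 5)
Your proposal is correct and follows essentially the same route as the paper: it invokes Lemma A.8 of the cited reference for the starting inequality and then uses the identical telescoping insertions ($\pm\,\Cm_1\F_2^*$ and $\pm\,\F_2\Cm_1\F_1^*$) to bound the two mixed differences before collecting coefficients. Your additional remarks justifying why the matrix lemma extends to the infinite-dimensional setting are a welcome elaboration of a point the paper only asserts parenthetically.
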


\begin{theorem} [Mean Approximation Error] \label{thm:posterior mean Theorem}
    Consider the error $\eps_m$ in \eqref{eq:meanErr} between the true posterior mean in \eqref{eq:infiniteMean}  and its finite-dimensional posterior approximation in \eqref{eq:finiteMean}. Then, under Assumption \ref{assumption:errors} it holds that
    \begin{align}
        \eps_m\leq r_1'\psi_1(n)+r_2'\psi_2(n)+r_3'\psi_3(n),
    \end{align}
    where
    \begin{align*}
        r_1'  &= 2c_1r_1\|y-\F m_0\|_2 , \\
            r_2'  &=  2c_2r_2\|y-\F m_0\|_2 + r_2\|m_0 \|_\H \|\msK(\Cm_0,\F)\|_{op} ,  \\
                r_3' &= r_3 \|m_0\|_{\H'},
    \end{align*}
        and $c_1,$ $c_2$ are defined in \eqref{eq:constantsc1c2}.
    \begin{proof}
    By the definition of $\eps_m$ in \eqref{eq:meanErr} and an application of the triangle inequality,
        \begin{align}\label{eq:auxboundepsm}
        \begin{split}
            \eps_m \le 
            \|m_0-P^*\vec{m}_0\|_{\H} &+\|\msK(\Cm_0,\F)-\msK(P^*C_0P,FP)\|_{op}\|y-F\vec{m}_0\|_2  \\
          &+\|\msK(\Cm_0,\F)\|_{op}\|\F m_0-F\vec{m}_0\|_{2}.
          \end{split}
        \end{align}
        We next bound each of the terms on the right-hand side.
        By Assumption \ref{assumption:errors} (iii) and the fact that $\vec{m}_0=Pm_0$, we have that
        \begin{align}\label{eq:aux1term}
            \|m_0-P^*\vec{m}_0\|_{\H}=\|m_0-P^*Pm_0\|_{\H}\leq r_3 \|m_0\|_{\H'} \psi_3(n).
        \end{align}
        By Lemma \ref{lemma:continuity} and Assumption \ref{assumption:errors} (i) and (ii), we have that
        \begin{align}\label{eq:aux2term}
         \begin{split}
            \|\msK(\Cm_0,\F)-\msK(P^*C_0P,FP)\|_{op} &\leq c_1\|\Cm_0-P^*C_0P\|_{op} + c_2\|\F-FP\|_{op}\\
            &\leq c_1r_1\psi_1(n)+c_2r_2\psi_2(n).
            \end{split}
        \end{align}
        By Assumption \ref{assumption:errors} (ii) and (iii) we have that
        \begin{align*} 
            \begin{split}
            \|y-F\vec{m}_0\| &\leq \|y-\F m_0\|+\|\F\|_{op}\|m_0-Pm_0\|_{\H}+\|\F-FP\|_{op}\|m_0\|_{\H}\\
           & \leq \|y-\F m_0\|+\|\F\|_{op}\|m_0\|_{\H'}r_3\psi_3(n)+\|m_0\|_{\H}r_2\psi_2(n).
            \end{split}
        \end{align*}
        This implies that for $n$ sufficiently large, it holds that
        \begin{align}\label{eq:aux4term}
            \|y-F\vec{m}_0\|_2\leq 2\|y-\F m_0\|_2.
        \end{align}
        Finally, by Assumption \ref{assumption:errors} (ii) we have that
        \begin{align}\label{eq:aux3term}
            \|\F m_0-F\vec{m}_0\|_{2}\leq \|\F-FP\|_{op}\|m_0\|_{\H}\leq r_2 \|m_0\|_{\H} \psi_2(n).
        \end{align}
        Plugging in the bounds \eqref{eq:aux1term}, \eqref{eq:aux2term}, \eqref{eq:aux4term}, and \eqref{eq:aux3term} into inequality \eqref{eq:auxboundepsm} gives the desired result.
    \end{proof}
\end{theorem}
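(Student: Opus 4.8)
\emph{Proof proposal.} The plan is to start from the closed-form expressions for the two posterior means, $m_{\text{post}} = m_0 + \msK(\Cm_0,\F)(y-\F m_0)$ from \eqref{eq:infiniteMean} and $\vec{m}_{\text{post}} = \vec{m}_0 + C_0F^\natural(FC_0F^\natural+\Gamma)^{-1}(y-F\vec{m}_0)$ from \eqref{eq:finiteMean}, and to compare them after lifting the discrete mean to $\H$ through $P^*$. The crucial preliminary observation is that the \emph{lifted} discrete Kalman gain coincides with the abstract Kalman gain operator \eqref{eq:kalman} evaluated at the lifted covariance and lifted forward model, namely
\begin{align*}
P^* C_0 F^\natural (FC_0F^\natural+\Gamma)^{-1} = \msK\bigl(P^*C_0P,\, FP\bigr).
\end{align*}
This follows from the identities $PP^* = \mathrm{id}_{\R^n_M}$ and $(FP)^* = P^*F^\natural$ (the adjoint of a composition, with $F^\natural = M^{-1}F^T$ the adjoint of $F$ in $\R^n_M$), which let one insert $PP^*$ factors and cancel them. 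Granting this and recalling $\vec{m}_0 = Pm_0$, one gets $P^*\vec{m}_{\text{post}} = P^*Pm_0 + \msK(P^*C_0P,FP)(y-F\vec{m}_0)$; subtracting from $m_{\text{post}}$ and adding and subtracting $\msK(\Cm_0,\F)(y-F\vec{m}_0)$ produces exactly the three-term bound \eqref{eq:auxboundepsm}: a projection error $\|(I-P^*P)m_0\|_{\H}$, a gain-perturbation term $\|\msK(\Cm_0,\F)-\msK(P^*C_0P,FP)\|_{op}\,\|y-F\vec{m}_0\|_2$, and a forward-mismatch term $\|\msK(\Cm_0,\F)\|_{op}\,\|\F m_0 - F\vec{m}_0\|_2$.

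Next I would bound each term. The first is $\le r_3\psi_3(n)\|m_0\|_{\H'}$ directly from Assumption \ref{assumption:errors}(iii), since $m_0 \in \H'$ by hypothesis. For the second, the gain perturbation is controlled by Lemma \ref{lemma:continuity} applied with $(\Cm_1,\F_1)=(\Cm_0,\F)$ and $(\Cm_2,\F_2)=(P^*C_0P,FP)$ — noting that $P^*C_0P$ is symmetric and positive semidefinite on $\H$ — giving $c_1\|\Cm_0-P^*C_0P\|_{op}+c_2\|\F-FP\|_{op} \le c_1r_1\psi_1(n)+c_2r_2\psi_2(n)$ via Assumption \ref{assumption:errors}(i),(ii); and the data residual $\|y-F\vec{m}_0\|_2$ is bounded by splitting $y-FPm_0 = (y-\F m_0) + \F(I-P^*P)m_0 + (\F-FP)(P^*Pm_0)$ (using $FPP^*P = FP$) and invoking (ii),(iii) with $\|P^*P\|_{op}\le 1$, so that for $n$ large enough — this is where $\psi_1,\psi_2,\psi_3 \to 0$ enters — the last two summands are dominated by $\|y-\F m_0\|_2$ and hence $\|y-F\vec{m}_0\|_2 \le 2\|y-\F m_0\|_2$. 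The third term is immediate once one notes $\F m_0 - F\vec{m}_0 = (\F-FP)m_0$, so it is $\le r_2\psi_2(n)\|m_0\|_{\H}$ by Assumption \ref{assumption:errors}(ii).

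Finally, I would substitute these four estimates into \eqref{eq:auxboundepsm} and collect the resulting bound according to $\psi_1,\psi_2,\psi_3$, reading off $r_1' = 2c_1r_1\|y-\F m_0\|_2$, $r_2' = 2c_2r_2\|y-\F m_0\|_2 + r_2\|m_0\|_{\H}\|\msK(\Cm_0,\F)\|_{op}$, and $r_3' = r_3\|m_0\|_{\H'}$. The only genuinely delicate points — rather than outright obstacles — are (a) establishing and correctly using the Kalman-gain lifting identity, which requires careful bookkeeping of adjoints versus transposes in the weighted space $\R^n_M$ (this is precisely where working with $F^\natural = M^{-1}F^T$ instead of $F^T$ matters), and (b) the asymptotic step $\|y-F\vec{m}_0\|_2\le 2\|y-\F m_0\|_2$, which holds only for $n$ sufficiently large and is what makes the conclusion an eventual-in-$n$ estimate. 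Everything else reduces to the triangle inequality and submultiplicativity of the operator norm.
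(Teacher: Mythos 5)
Your proposal is correct and follows essentially the same route as the paper's proof: the same three-term triangle-inequality decomposition \eqref{eq:auxboundepsm}, the same use of Lemma \ref{lemma:continuity} with Assumption \ref{assumption:errors}(i)--(ii) for the gain perturbation, the same large-$n$ bound $\|y-F\vec{m}_0\|_2\le 2\|y-\F m_0\|_2$, and the same identification of $r_1',r_2',r_3'$. The one point where you go beyond the paper is in explicitly verifying the lifting identity $P^*C_0F^\natural(FC_0F^\natural+\Gamma)^{-1}=\msK(P^*C_0P,FP)$ via $PP^*=\mathrm{id}$ and $(FP)^*=P^*F^\natural$, a step the paper's proof uses implicitly when writing \eqref{eq:auxboundepsm}; this is a correct and worthwhile clarification but not a different argument.
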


\begin{theorem} [Covariance Approximation Error]\label{thm:posterior covariance Theorem}
    Consider the error $\eps_C$ in \eqref{eq:covErr} between the true posterior covariance operator in \eqref{eq:infiniteCov} and its finite-dimensional posterior approximation in \eqref{eq:finiteCov}. Then, under Assumption \ref{assumption:errors} it holds that
    \begin{align}
        \eps_C\leq r_1'\psi_1(n)+r_2'\psi_2(n),
    \end{align}
  where
  \begin{align*}
      r_1' &= r_1 + 2c_1r_1 \|\F\|_{op}\|\Cm_0\|_{op} + 2r_1 \|\F\|_{op}\|\msK(\Cm_0,\F)\|_{op}, \\
      r_2' &= 2c_2r_2 \|\F\Cm_0\|_{op} + r_2\|\Cm_0\|_{op}\|\msK(\Cm_0,\F)\|_{op},
  \end{align*}
  and $c_1,$ $c_2$ are defined in \eqref{eq:constantsc1c2}.
\begin{proof}
By the definition of $\eps_C$ in \eqref{eq:covErr} and an application of the triangle inequality,
    \begin{align}\label{eq:auxboundepsc}
    \begin{split}
        \eps_C  
        \leq \|\Cm_0-P^*C_0P\|_{op}   &+   \|\msK(\Cm_0,\F)-\msK(P^*C_0P,FP)\|_{op}\|FC_0P\|_{op}\\
        & +\|\msK(\Cm_0,\F)\|_{op}\|\F\Cm_0-FC_0P\|_{op}.
        \end{split}
    \end{align}
    By Assumption \ref{assumption:errors} (i) we have that
    \begin{align}\label{eq:aux1epsc}
        \|\Cm_0-PC_0P\|_{op}\leq r_1\psi_1(n).
    \end{align}
    As in the previous proof, applying Lemma \ref{lemma:continuity} and using Assumption \ref{assumption:errors}  (i) and (ii) gives
     \begin{align}\label{eq:aux2epsc}
            \|\msK(\Cm_0, \F)-\msK(P^*C_0P,FP)\|_{op} 
            \leq c_1r_1\psi_1(n)+c_2r_2\psi_2(n).
        \end{align}
        The triangle inequality along with Assumption \ref{assumption:errors} (i) and (ii) yield the bound
        \begin{align}\label{eq:aux3epsc}
        \begin{split}
            \|\F\Cm_0-FC_0P\|_{op} &\leq \| FP\|_{op} \|\Cm_0-P^*C_0P\|_{op} + \|\F-FP\|_{op}\|\Cm_0\|_{op} \\
            &\leq r_1 \|FP\|_{op}\psi_1(n) + r_2 \|\Cm_0\|_{op}\psi_2(n).
            \end{split}
        \end{align}
        Assumption \ref{assumption:errors} (i) and (ii) guarantee that, for sufficiently large $n$,
        \begin{align}\label{eq:aux4epsc}
            \|FP\|_{op}\leq 2\|\F\|_{op}, \text{ and } \|FC_0P\|_{op}\leq 2\|\F\|_{op}\|\Cm_0\|_{op}.   
        \end{align}
     Plugging in the bounds \eqref{eq:aux1epsc}, \eqref{eq:aux2epsc}, \eqref{eq:aux3epsc}, and \eqref{eq:aux4epsc} into \eqref{eq:auxboundepsc} gives the desired result. 
\end{proof}
    
\end{theorem}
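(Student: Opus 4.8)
The plan is to run the same strategy used for the mean in Theorem~\ref{thm:posterior mean Theorem}: write both posterior covariances through the Kalman gain operator $\msK$ of \eqref{eq:kalman} and then exploit its continuity (Lemma~\ref{lemma:continuity}). Using \eqref{eq:infiniteCov} and \eqref{eq:finiteCov}, write $\Cm_{\text{post}} = \Cm_0 - \msK(\Cm_0,\F)\F\Cm_0$ and $C_{\text{post}} = C_0 - \msK(C_0,F)FC_0$. The first step is the algebraic identity
\[
P^*C_{\text{post}}P = P^*C_0P - \msK\bigl(P^*C_0P,\, FP\bigr)\,(FP)\,(P^*C_0P),
\]
i.e., the lifted discrete posterior covariance is exactly the posterior covariance of the Hilbert-space inverse problem with prior covariance $P^*C_0P$ and forward map $FP$. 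I would prove this by repeatedly using $PP^* = I$ on $\R^n_M$ together with $(FP)^* = P^*F^\natural$ (where $F^\natural = M^{-1}F^T$ is the $\R^n_M$-adjoint of $F$): these give $P^*\msK(C_0,F) = \msK(P^*C_0P,FP)$ since $FPP^*C_0PP^*F^\natural = FC_0F^\natural$, and likewise $FC_0P = (FP)(P^*C_0P)$. Note $P^*C_0P\in S_+$, so $\msK$ is well defined on it.

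With that identity, I subtract the two operators, add and subtract the cross term $\msK(\Cm_0,\F)\,FC_0P$, and apply the triangle inequality to reach the three-term bound \eqref{eq:auxboundepsc},
\[
\eps_C \le \|\Cm_0-P^*C_0P\|_{op} + \|\msK(\Cm_0,\F)-\msK(P^*C_0P,FP)\|_{op}\,\|FC_0P\|_{op} + \|\msK(\Cm_0,\F)\|_{op}\,\|\F\Cm_0-FC_0P\|_{op}.
\]
The first term is $\le r_1\psi_1(n)$ by Assumption~\ref{assumption:errors}(i). For the second, Lemma~\ref{lemma:continuity} with $(\Cm_1,\F_1)=(\Cm_0,\F)$, $(\Cm_2,\F_2)=(P^*C_0P,FP)$ and Assumption~\ref{assumption:errors}(i)--(ii) give $\le c_1r_1\psi_1(n)+c_2r_2\psi_2(n)$, while $\|FC_0P\|_{op}\le 2\|\F\Cm_0\|_{op}$ for $n$ large because $\|FC_0P - \F\Cm_0\|_{op}\to 0$ by (i)--(ii). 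For the third, the splitting $\F\Cm_0 - FC_0P = (\F-FP)\Cm_0 + FP(\Cm_0 - P^*C_0P)$ together with (i)--(ii) bounds $\|\F\Cm_0 - FC_0P\|_{op}\le r_2\|\Cm_0\|_{op}\psi_2(n) + \|FP\|_{op}\,r_1\psi_1(n)$, and $\|FP\|_{op}\le 2\|\F\|_{op}$ for $n$ large. Collecting the coefficients of $\psi_1(n)$ and $\psi_2(n)$ produces the claimed $r_1',r_2'$; here $\psi_3$ is absent because the posterior covariance does not depend on $m_0$ and the identity above never invokes that $P^*P$ is near the identity.

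I expect the main obstacle to be the weighted-space adjoint bookkeeping: verifying the lifting identity $P^*\msK(C_0,F)FC_0P = \msK(P^*C_0P,FP)(FP)(P^*C_0P)$ verbatim is exactly where the $M$-inner product matters — one needs $F^\natural = M^{-1}F^T$, not $F^T$, for the identity and hence the operator-norm bounds to hold — and one must also check that the constants $c_1,c_2$ from Lemma~\ref{lemma:continuity}, which a priori depend on $\|FP\|_{op}$, can be replaced for $n$ large by the versions in \eqref{eq:constantsc1c2} evaluated at $\F$ using $\|FP\|_{op}\le 2\|\F\|_{op}$. Everything else is a routine triangle-inequality expansion paralleling the proof of Theorem~\ref{thm:posterior mean Theorem}.
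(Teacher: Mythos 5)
Your proposal is correct and follows essentially the same route as the paper: the same three-term triangle-inequality decomposition \eqref{eq:auxboundepsc}, the same use of Lemma \ref{lemma:continuity} with Assumption \ref{assumption:errors} (i)--(ii), and the same asymptotic bounds on $\|FP\|_{op}$ and $\|FC_0P\|_{op}$. The only difference is that you make explicit the lifting identity $P^*C_{\text{post}}P = P^*C_0P - \msK(P^*C_0P,FP)(FP)(P^*C_0P)$ via $PP^*=I$ and $(FP)^*=P^*F^\natural$, a step the paper leaves implicit; your verification of it is correct.
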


\subsection{Error Analysis: Finite Element Discretizations}\label{ssec:errorFEM}
The following result applies Theorems \ref{thm:posterior mean Theorem} and \ref{thm:posterior covariance Theorem} to quantify the errors in the finite element approximations to the posterior mean and covariance operators in the setting considered in Subsection \ref{sec:FEM discretization}.

\begin{theorem}[Finite Element Posterior Mean and Covariance Approximation]
\label{thm:FEM Posterior Theorem}
    Consider the discretization proposed in Subsection \ref{sec:FEM discretization} with a finite element space $\V_h$ of linear Lagrange basis vectors and time discretization step of $\Delta t.$  Assume the prior mean function is chosen to be in the Sobolev space $H^s(\D)$. Let $\Theta(x)\in C^1(\bar{\D}),$ $b(x)\in L^{\infty}(\D)$, and assume that both functions are (almost surely) bounded below by positive constants. The errors in the mean and covariance approximations are bounded by
    \begin{align*}
        \|m_{\emph{post}}(x)-P^*\vec{m}_{\emph{post}}\|_{L^2(\D)}&\lesssim h^{\min  \{2,s\}}+\Delta t^2, \\
        \|\Cm_{\emph{post}}-P^*C_0P\|_{op}&\lesssim h^{2}+\Delta t^2.
    \end{align*}
\end{theorem}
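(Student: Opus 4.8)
The plan is to apply the general Theorems~\ref{thm:posterior mean Theorem} and~\ref{thm:posterior covariance Theorem} with the finite element discretization, so the real work is in verifying Assumption~\ref{assumption:errors} with the right rates. I would take $\H' = H^s(\D)$ for part (iii) (or an appropriate Sobolev space contained in the Cameron--Martin space, which for a Matérn covariance $\Cm_0 = \mathcal{A}^{-\alpha}$ is $H^{2\alpha}(\D)\cap H^1_0(\D)$), and the claimed rates $\psi_1(n) \asymp h^2$, $\psi_2(n) \asymp h^2 + \Delta t^2$, $\psi_3(n) \asymp h^{\min\{2,s\}}$ would then feed directly into the theorem bounds. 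Concretely, the three ingredients are: (i) $\|\Cm_0 - P^*C_0 P\|_{op} \lesssim h^2$ for $\Cm_0 = \mathcal{A}^{-\alpha}$ and $C_0 = (K^{-1}M)^\alpha$; (ii) $\|\F - FP\|_{op} \lesssim h^2 + \Delta t^2$ for the heat-plus-local-averages forward model and its Crank--Nicolson finite-element surrogate; (iii) the standard finite element approximation bound $\|(I - P^*P)u\|_{L^2(\D)} \lesssim h^{\min\{2,s\}}\|u\|_{H^s(\D)}$ for linear Lagrange elements on quasi-uniform meshes.

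Step~(iii) is classical: $P^*P$ is the $L^2$-orthogonal projection onto $\V_h$, and for linear Lagrange elements one has the Bramble--Hilbert/Aubin--Nitsche estimate $\|u - P^*Pu\|_{L^2} \lesssim h^{\min\{2,s\}}\|u\|_{H^s}$; the min with $2$ reflects that piecewise-linear elements cannot do better than $O(h^2)$ regardless of extra smoothness. Step~(i) is the heart of the prior-covariance analysis: I would write $\Cm_0 - P^*C_0P = \mathcal{A}^{-\alpha} - P^*(K^{-1}M)^\alpha P$, note that $A^{-1} = K^{-1}M$ on $\R^n_M$ is exactly the Galerkin solution operator for the elliptic PDE~\eqref{eq:ellipticPDE}, so $P^* A^{-1} P$ is the finite element approximation of $\mathcal{A}^{-1}$, and invoke the classical $L^2$ error estimate for Galerkin approximation of the elliptic problem, $\|\mathcal{A}^{-1} - P^* A^{-1} P\|_{op} \lesssim h^2$ (valid since $\Theta \in C^1(\bar\D)$, $b\in L^\infty(\D)$ bounded below, giving $H^2$ elliptic regularity). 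One then bootstraps from $\alpha = 1$ to general integer $\alpha$ by a telescoping identity $\mathcal{A}^{-\alpha} - (P^*A^{-1}P)^\alpha = \sum_{j=0}^{\alpha-1} (P^*A^{-1}P)^j(\mathcal{A}^{-1} - P^*A^{-1}P)\mathcal{A}^{-(\alpha-1-j)}$, using uniform operator-norm bounds on $P^*A^{-1}P$ and $\mathcal{A}^{-1}$, plus the fact that $P^*(K^{-1}M)^\alpha P = (P^*A^{-1}P)^\alpha$ because $PP^* = I$. Step~(ii) combines two sources of error: the semidiscrete spatial Galerkin error for the heat equation, $\|\G - \G_h\|_{L^2\to L^2}\lesssim h^2$ (again from $H^2$ elliptic regularity and parabolic smoothing, a classical result of Thomée), and the Crank--Nicolson time-stepping error $\lesssim \Delta t^2$; composing with the bounded observation operator $\mathcal{O}$ (bounded $L^2(\D)\to\R^{d_y}$, exactly evaluated on $\V_h$ by assumption) preserves these rates, so $\|\F - FP\|_{op} = \|\mathcal{O}\G - OG\,P\|_{op} \lesssim h^2 + \Delta t^2$.

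The main obstacle I anticipate is \textbf{step~(i) for $\alpha > 1$}: the naive telescoping estimate above picks up factors of $\mathcal{A}^{-(\alpha-1-j)}$, which are bounded operators, so the argument does go through, but one must be careful that the Galerkin $L^2$-error bound is genuinely an operator-norm bound on $L^2(\D)$ (not merely an energy-norm bound, which would only give $O(h)$) — this requires the Aubin--Nitsche duality trick and full $H^2$ elliptic regularity, which is where the regularity hypotheses on $\Theta$ and $b$ are used. A secondary subtlety is keeping the constants in Theorems~\ref{thm:posterior mean Theorem}--\ref{thm:posterior covariance Theorem} finite: one needs $\|\Cm_0\|_{op} < \infty$ (immediate, $\Cm_0$ trace-class), $\|\F\|_{op} < \infty$ (the local-average observation operator is bounded), $\|m_0\|_{H^s} < \infty$ (hypothesis), and $\|m_0\|_{\H} < \infty$ (from $m_0 \in H^s \subset L^2$), all of which hold under the stated assumptions, so the $\lesssim$ in the conclusion absorbs all of these problem-dependent but mesh-independent constants.
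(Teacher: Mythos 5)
Your proposal follows essentially the same route as the paper: both reduce the theorem to verifying Assumption~\ref{assumption:errors} for the finite element discretization, prove the prior-covariance bound via the classical $L^2$ (Aubin--Nitsche) estimate for $\alpha=1$ followed by an induction/telescoping over integer $\alpha$ that uses $PP^*=I$, combine the semidiscrete Galerkin and Crank--Nicolson errors for the forward map, and use the standard $L^2$-projection estimate with $\H'=H^s(\D)$ for the mean. The only slip is your parenthetical identification of the Cameron--Martin space as $H^{2\alpha}(\D)\cap H^1_0(\D)$: since $\Cm_0^{1/2}=\mathcal{A}^{-\alpha/2}$ maps $L^2(\D)$ into a space of Sobolev order $\alpha$, the Cameron--Martin space has order $\alpha$ rather than $2\alpha$ (consistent with the paper's requirement $s\geq\alpha$), but this does not affect your argument since you ultimately take $\H'=H^s(\D)$ exactly as the paper does.
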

This result follows from Theorems \ref{thm:posterior mean Theorem} and \ref{thm:posterior covariance Theorem} upon verifying that Assumption \ref{assumption:errors} hold for the finite element discretizations of our model Bayesian inverse problem. Verifying these assumptions will be the focus of the following three subsections. In so doing, we will view the $\psi_i$ in Assumption \ref{assumption:errors} as functions of the mesh width  $h,$ noting that this quantity determines the dimension $n$ of the discretization space. For simplicity, we have restricted our discussion to linear Lagrange basis vectors. As our general theory suggests, higher degree polynomials could be used in the finite element approximations to yield faster convergence rates. 
 
\subsubsection{Finite Element Approximation of Matérn-Type Prior Covariance}\label{ssec:errorcovarianceFEM}
Here we show that the finite element approximation to the covariance operator given in Subsection \ref{sec:FEM discretization} satisfies Assumption \ref{assumption:errors}  (i).  
    \begin{theorem}[Operator-Norm Error for Finite Element Prior Covariance Approximation] \label{th:opNormPriorCov}
        Let $\Cm_0:L^2(\D)\to L^2(\D)$ be the prior covariance operator defined in Section \ref{sec: formulation}, and let $A^{-\alpha}:\R^n_M\to \R^n_M$ be the finite-dimensional approximation defined in Subsection \ref{sec:FEM discretization} corresponding to a finite element space $\V_h$ with piecewise linear basis functions.  Let $\Theta(x)\in C^1(\bar{\D}),$ $b(x)\in L^{\infty}(\D)$, and assume that both functions are (almost surely) bounded below by positive constants\color{black}.  Then, there exists a constant $c$  independent of  $h$ such that
        \begin{align}\label{eqn:FEM op norm prior bound}
            \|\Cm_0-P^*A^{-\alpha}P\|_{op}\leq ch^{2}.
        \end{align}
    \end{theorem}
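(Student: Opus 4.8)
The plan is to decompose the error $\|\Cm_0 - P^* A^{-\alpha} P\|_{op}$ using the telescoping identity for powers. Write $\Cm_0 = \mathcal{A}^{-\alpha}$ and note that $P^* A^{-\alpha} P = P^*(K^{-1}M)^\alpha P$. Since both $\mathcal{A}^{-1}$ and its discretization $A^{-1} = K^{-1}M$ are self-adjoint (the latter in the weighted space $\R^n_M$), I would use the algebraic identity
\begin{equation*}
\mathcal{A}^{-\alpha} - P^* A^{-\alpha} P = \sum_{j=0}^{\alpha-1} \mathcal{A}^{-(\alpha-1-j)}\bigl(\mathcal{A}^{-1} - P^* A^{-1} P\bigr)(P^* A^{-1} P)^{j},
\end{equation*}
which is valid because $P P^* = I$ on $\R^n_M$, so that $(P^* A^{-1} P)^j = P^* A^{-j} P$. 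Taking operator norms and using submultiplicativity reduces the problem to two ingredients: (a) a uniform-in-$h$ bound $\|P^* A^{-1} P\|_{op} \le C$ and $\|\mathcal{A}^{-1}\|_{op} \le C$, and (b) the single-power operator-norm estimate $\|\mathcal{A}^{-1} - P^* A^{-1} P\|_{op} \lesssim h^2$.

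For ingredient (a), the bound on $\|\mathcal{A}^{-1}\|_{op}$ is immediate from ellipticity (coercivity of the bilinear form with $\Theta, b$ bounded below gives $\mathcal{A}^{-1}$ bounded $L^2 \to L^2$, indeed $L^2 \to H^1_0$). For the discrete operator, $P^* A^{-1} P = P^* K^{-1} M P$ is exactly the Galerkin solution operator composed with the $L^2$-projection; Céa's lemma together with the discrete coercivity (inherited from the continuous bilinear form on $\V_h \subset H^1_0$) yields $\|P^* A^{-1} P\|_{op} \le C$ with $C$ independent of $h$. For ingredient (b), the key point is that $P^* A^{-1} P$ applied to $f \in L^2(\D)$ returns the Galerkin finite element approximation $u_h$ to the solution $u = \mathcal{A}^{-1} f$ of the elliptic PDE \eqref{eq:ellipticPDE} with right-hand side $f$ (as recorded in the excerpt: $A^{-1}\vec{f} = K^{-1}M\vec f$ is precisely the Galerkin approximation). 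Standard finite element theory — Aubin–Nitsche duality for the $L^2$ error of the Galerkin approximation to a second-order elliptic problem with $C^1$ coefficients on a sufficiently regular domain, using $H^2$-regularity of the solution operator and piecewise-linear interpolation estimates — gives $\|u - u_h\|_{L^2(\D)} \lesssim h^2 \|f\|_{L^2(\D)}$, uniformly over quasi-uniform meshes. This is exactly the statement $\|\mathcal{A}^{-1} - P^* A^{-1} P\|_{op} \lesssim h^2$.

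Assembling the pieces: each of the $\alpha$ terms in the telescoping sum is bounded by $\|\mathcal{A}^{-1}\|_{op}^{\alpha-1-j} \cdot \|\mathcal{A}^{-1} - P^* A^{-1} P\|_{op} \cdot \|P^* A^{-1} P\|_{op}^{j} \le C^{\alpha-1} \cdot C h^2$, so the full sum is $\lesssim \alpha C^\alpha h^2 = c h^2$ with $c$ depending only on $\alpha$, the domain, and the coefficient bounds, but not on $h$. This establishes \eqref{eqn:FEM op norm prior bound}.

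The main obstacle I anticipate is ingredient (b): verifying the $O(h^2)$ operator-norm (i.e., $L^2 \to L^2$) bound for the Galerkin approximation when the diffusion coefficient $\Theta$ is merely $C^1(\bar{\D})$ and $b \in L^\infty$ rather than smooth. The Aubin–Nitsche argument needs full $H^2$-regularity of the adjoint problem (which here coincides with the primal since the operator is self-adjoint), and this elliptic-regularity step is where the hypotheses on $\Theta$, $b$, and the regularity of $\D$ are actually consumed; one must confirm that $C^1$ diffusion coefficients on a convex or smooth domain suffice for the $\|u\|_{H^2} \lesssim \|f\|_{L^2}$ estimate, and that piecewise-linear interpolation gives the matching $O(h^2)$ rate in the duality pairing. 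A secondary, more bookkeeping-level point is keeping all constants genuinely independent of $h$ across the $\alpha$ telescoped factors, which requires the uniform discrete coercivity constant — standard under quasi-uniform refinement but worth stating explicitly.
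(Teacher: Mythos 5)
Your proposal is correct and follows essentially the same route as the paper: the paper proves the bound by induction on $\alpha$, whose inductive step is exactly one term of your telescoping identity (both rely on $PP^*=I$ so that $(P^*A^{-1}P)^j = P^*A^{-j}P$), and both arguments reduce to the single-power estimate $\|\mathcal{A}^{-1}-P^*A^{-1}P\|_{op}\lesssim h^2$ from the standard Aubin--Nitsche $L^2$ error bound together with $H^2$ elliptic regularity and uniform boundedness of $\mathcal{A}^{-1}$ and $P^*A^{-1}P$. The paper handles your anticipated obstacle by citing the classical $L^2$ finite element estimate and elliptic regularity directly, so no new ideas are needed beyond what you describe.
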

The proof can be found in Appendix \ref{AppendixFEMProofs}. This shows that the finite-dimensional prior approximation proposed in Subsection \ref{sec:FEM discretization} satisfies Assumption \ref{assumption:errors}  (i) with $\psi_1(h)=h^2.$ 

\subsubsection{Finite Element Approximation of Heat Forward Model}\label{ssec:errorforwardFEM}
We now show that the discretized finite element forward map given in \eqref{eq:discrete FEM forward map} satisfies Assumption \ref{assumption:errors} (ii). 
\begin{theorem}[Operator-Norm Error for Finite Element Forward Map Approximation]\label{thm:forwardFEM}
Let $\F:L^2(\D)\to \R^{d_y}$ be the forward model defined in \eqref{eq:forward model}, and let $F:\R^n_M \to \R^{d_y}$ be the finite-dimensional approximation defined in \eqref{eq:discrete FEM forward map} corresponding to a finite element space $\V_h$ consisting of linear Lagrange basis vectors and a time-discretization step of $\Delta t.$ Then, there exist constants $c_1$ and $c_2$  independent of $h$ and $\Delta t$ such that
\begin{align}
    \|\F-FP\|_{op}\leq c_1h^{2}+c_2\Delta t^2.
\end{align}
\end{theorem}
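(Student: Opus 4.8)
The plan is to split the forward model error $\|\F - FP\|_{op}$ into a spatial discretization error and a time discretization error by inserting the semidiscrete Galerkin solution operator. Recall $\F = \OO \circ \G$ and $F = O G$, where $G$ is the Crank--Nicolson time-stepping operator. Let $G_h^{\text{semi}} : \R^n_M \to \R^n_M$ denote the exact-in-time solution operator of the semidiscrete system \eqref{eq:semidiscrete ODE}, i.e. $G_h^{\text{semi}} = \exp(-A)$ in the weighted space with $A = M^{-1}K$. Then I would write
\begin{align*}
\|\F - FP\|_{op} = \|\OO \G - O G P\|_{op} \leq \|\OO \G - O G_h^{\text{semi}} P\|_{op} + \|O (G_h^{\text{semi}} - G) P\|_{op}.
\end{align*}
Since $\OO$ and its discrete counterpart $O$ are bounded (the local-average observation map has operator norm bounded by $\sqrt{d_y}$ times the volume of a $\delta$-ball, and $O$ agrees with $\OO$ on $\V_h$ by the simplifying assumption that the observation map is evaluated exactly on $\V_h$), it suffices to bound (a) $\|\G - P^* G_h^{\text{semi}} P\|_{op}$, the $L^2$-error of the semidiscrete Galerkin approximation to the heat semigroup at $t=1$, and (b) $\|G_h^{\text{semi}} - G\|_{op}$ in the weighted space, the Crank--Nicolson time-stepping error.

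For step (a), I would invoke classical finite element theory for parabolic problems (e.g.\ Thomée's monograph): for the heat equation with a piecewise-linear Lagrange finite element space on a quasi-uniform mesh, the semidiscrete Galerkin approximation satisfies $\|v(\cdot,1) - v_h(\cdot,1)\|_{L^2(\D)} \lesssim h^2 \|u\|_{L^2(\D)}$ for $u \in L^2(\D)$, using the smoothing property of the heat semigroup to upgrade the regularity of the initial data at positive times (so no $H^2$-regularity of $u$ is needed; the $t^{-1}$-type singularity in the smoothing estimate is integrable and harmless at the fixed time $t=1$). This is exactly an operator-norm bound $\|\G - P^* G_h^{\text{semi}} P\|_{op} \lesssim h^2$. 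Combined with the boundedness of $\OO$ on $\V_h$, this controls the first term by $c_1 h^2$. I should be a little careful that the operator norm here is the one from $L^2(\D)$ to $L^2(\D)$, and that composing with $P$ and $P^*$ (orthogonal projection pieces) does not enlarge it.

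For step (b), I would diagonalize in the eigenbasis of the generalized eigenproblem $K \psi = \lambda M \psi$ (equivalently, the eigenbasis of the self-adjoint operator $A = M^{-1}K$ on $\R^n_M$). On each eigenmode with eigenvalue $\lambda \geq 0$, $G_h^{\text{semi}}$ acts by multiplication by $e^{-\lambda}$ and $G$ by $r(\lambda \Delta t)^{1/\Delta t}$, where $r(z) = (1 - z/2)/(1 + z/2)$ is the Crank--Nicolson stability function. The standard estimate $|e^{-z} - r(z)| \lesssim z^3$ for small $z \geq 0$, together with $|r(z)| \leq 1$ and $e^{-z} \leq 1$, yields $|e^{-\lambda} - r(\lambda \Delta t)^{1/\Delta t}| \lesssim \Delta t^2$ uniformly over $\lambda \geq 0$ (the modes with $\lambda \Delta t$ not small contribute a difference of two quantities each bounded by $1$, but the exponential decay $e^{-\lambda}$ makes those negligible; this is the usual second-order convergence argument for Crank--Nicolson applied to a self-adjoint nonnegative operator with uniform-in-spectrum control). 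Since the eigenbasis is orthonormal in $\langle\cdot,\cdot\rangle_M$, this mode-wise bound gives $\|G_h^{\text{semi}} - G\|_{op} \lesssim \Delta t^2$ in the weighted space. Applying $O$ (bounded) then controls the second term by $c_2 \Delta t^2$.

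The main obstacle is step (a): getting the $h^2$ bound as a genuine operator norm from $L^2$ to $L^2$ without assuming regularity of the initial condition $u$. The resolution is the parabolic smoothing estimate --- the nonsmooth-data error estimates for Galerkin approximations of analytic semigroups (Thomée, Chapter 3) give $\|(\G - P^*G_h^{\text{semi}}P)u\|_{L^2} \lesssim h^2 t^{-1} \|u\|_{L^2}$, which at the fixed observation time $t=1$ is precisely what is needed. A secondary technical point to handle carefully is the uniform-in-spectrum Crank--Nicolson estimate in step (b): one must check that the crude bound on high-frequency modes (where $\lambda \Delta t$ is $O(1)$ or larger, so $r(\lambda\Delta t)^{1/\Delta t}$ and $e^{-\lambda}$ are merely both $\le 1$) still combines with the decay of $e^{-\lambda}$ to stay $O(\Delta t^2)$; alternatively, one restricts to $\Delta t$ small relative to $h$ (a mild and standard CFL-type condition, though Crank--Nicolson is unconditionally stable so this is only needed for the rate, not stability) and splits the sum at $\lambda\Delta t \sim 1$. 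Everything else is bookkeeping with the triangle inequality and the boundedness of the observation maps.
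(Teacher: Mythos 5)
Your overall architecture---peel off the bounded observation map, reduce to an $L^2\to L^2$ operator-norm bound on the solution operators, and obtain the $h^2$ rate from parabolic smoothing rather than from regularity of the initial datum---is in the spirit of the paper's proof, which however proceeds in one step: it sets $v=\G u$, $v_h=GPu$, cites a single classical fully discrete Crank--Nicolson error theorem to get $\|v-v_h\|_{L^2(\D)}\leq c_1h^2\|v\|_{H^2(\D)}+c_2\Delta t^2\|Pu\|_{L^2(\D)}$, and then uses boundedness of $\G:L^2(\D)\to H^2(\D)$ together with $\|P\|_{op}=1$. Your step (a), the semidiscrete nonsmooth-data estimate $\|(\G-P^*G_h^{\mathrm{semi}}P)u\|_{L^2(\D)}\lesssim h^2t^{-1}\|u\|_{L^2(\D)}$ evaluated at $t=1$, is correct and is arguably the more careful way to justify the spatial part.

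The gap is in step (b). The claim that $|e^{-\lambda}-r(\lambda\Delta t)^{1/\Delta t}|\lesssim\Delta t^2$ uniformly over $\lambda\geq 0$ is false, and the failure is precisely the well-known lack of $L$-stability of Crank--Nicolson: $|r(z)|\to 1$ as $z\to\infty$, so high modes are not damped. Quantitatively, for an eigenvalue $\lambda=K_0\Delta t^{-2}$ one has $|r(\lambda\Delta t)|^{1/\Delta t}=\bigl(1-4/(\lambda\Delta t^2+2\Delta t)\bigr)^{1/\Delta t}\to e^{-4/K_0}$, a constant of order one, while $e^{-\lambda}$ is negligible; the difference is therefore of order one, not $O(\Delta t^2)$. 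Eigenvalues of this size are present in the spectrum of $A=M^{-1}K$ (whose largest eigenvalue scales like $h^{-2}$ for piecewise linear elements on a quasi-uniform mesh) whenever $\Delta t\gtrsim h$. Hence your first proposed resolution---that the decay of $e^{-\lambda}$ renders the high modes negligible---does not work: it is $r(\lambda\Delta t)^{1/\Delta t}$, not $e^{-\lambda}$, that fails to be small, and composing with the rank-$d_y$ observation map as you do (via $\|O\|_{op}\|G_h^{\mathrm{semi}}-G\|_{op}$) cannot recover the loss. Your second resolution, a CFL-type restriction $\Delta t\lesssim h$ up to logarithms, does repair the argument but imports a hypothesis absent from the theorem statement; the standard alternatives are to precede Crank--Nicolson with a few damped (backward Euler) startup steps or to invoke a fully discrete nonsmooth-data estimate directly, which is essentially what the paper does by citing a fully discrete theorem and feeding it the smoothed state $v=\G u\in H^2(\D)$. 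As written, step (b) does not deliver the uniform $O(\Delta t^2)$ operator-norm bound that the rest of your argument requires.
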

The proof is given in Appendix \ref{AppendixFEMProofs}. We see that Assumption \ref{assumption:errors} (ii) holds for the Crank-Nicolson discretization with $\psi_{2}(h)=h^2+\Delta t^2.$
\subsubsection{Finite Element Orthogonal Projection}\label{ssec:errorprojectionFEM}
We first verify that the orthogonal projection of the prior mean function onto the finite element subspace $\V_h$ converges as in Assumption \ref{assumption:errors} (iii)  as the mesh is refined. We let $m_0(x)\in H^s(\D)$. Since we require that the mean function lies in the Cameron-Martin space $E= \text{Im}(\Cm_0^{1/2})$, we must have that $s\geq \alpha$. We assume that $\V_h$ consists of piecewise linear basis functions. Since $s\geq \alpha>d/2,$ the Sobolev embedding theorem guarantees that functions in $H^s(\D)$ are continuous. Consequently, the prior mean function can be well approximated by piecewise linear interpolants and, in turn, by its orthogonal projection onto $\V_h$. Assuming that the finite element space $\V_h$ is given by a quasiuniform mesh refinement of $\D$ with linear Lagrange basis functions, \cite[Theorem 6.8]{oden2012introduction} guarantees that, for any $m_0\in H^{s}(\D),$ there exists an element $U\in  \V_h$ such that $$\|m_0-U\|_{L^2(\D)}\leq ch^{\min \{2,s\}}\|m_0\|_{H^s(\D)}.$$
Since the orthogonal projection is defined to minimize the $L^2(\D)$ error over all functions in $\V_h$, we have that, for any $m_0\in H^{s}(\D),$
\begin{align}\label{eq:FEM projection bound}
    \|m_0-P^*Pm_0\|_{L^2(\D)}\leq \|m_0-U\|_{L^2(\D)}\leq ch^{\min\{2,s\}}\|m_0\|_{H^{s}(\D)}.
\end{align}
 We see that Assumption \ref{assumption:errors} (iii) holds for the finite element discretization with $\psi_3(h)=h^{\min\{2,s\}}$  and $\H'=H^s(\D).$

We have now verified that Assumption \ref{assumption:errors} holds for the finite element discretization, proving Theorem \ref{thm:FEM Posterior Theorem}.

\subsection{Error Analysis: Graph-Based Approximations}
\label{ssec:errorgraph}
The following result applies Theorems \ref{thm:posterior mean Theorem} and \ref{thm:posterior covariance Theorem} to quantify the error in the graph-based approximations to posterior mean and covariance operator in the setting considered in Subsection \ref{sec:graph discretization}.
\begin{theorem}[Graph-Based Posterior Mean and Covariance Approximation]
\label{thm:Graph Posterior Theorem}
Consider the discretization proposed in Subsection \ref{sec:graph discretization}. Assume we are in a realization in which the conclusion of Proposition \ref{thm:transportMaps} holds and we have that $\alpha>(5d+1)/4$.  Let $b(x)$ be Lipschitz, $\Theta(x)\in C^1(\M)$, and suppose that both are bounded below by positive constants. Then, if we take the scaling $h_n\asymp \sqrt{\frac{(\log n)^{c_d}}{n^{1/d}}},$
the errors in the posterior mean and covariance approximations are bounded by
\begin{align*}
\begin{split}
    \|m_{\emph{post}}(x)-P^*\vec{m}_{\emph{post}}\|_{L^2(\M)}\lesssim (\log n)^{\frac{c_d}{4}}n^{-\frac{1}{4d}},\\
    \|\Cm_{\emph{post}}-P^*C_{\emph{post}}P\|_{op}\lesssim (\log n)^{\frac{c_d}{4}}n^{-\frac{1}{4d}},
    \end{split}
\end{align*}
 where $c_d=3/4$ if $d=2$ and $c_d=1/d$ otherwise.
\end{theorem}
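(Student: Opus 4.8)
The plan is to verify the three conditions of Assumption~\ref{assumption:errors} for the graph-based discretization of Subsection~\ref{sec:graph discretization} and then invoke Theorems~\ref{thm:posterior mean Theorem} and~\ref{thm:posterior covariance Theorem}. Throughout I would express the error functions $\psi_1,\psi_2,\psi_3$ in terms of the transport scale $\eps_n\lesssim(\log n)^{c_d}/n^{1/d}$ from Proposition~\ref{thm:transportMaps}, and take the Hilbert space $\H'$ of Assumption~\ref{assumption:errors}(iii) to be $H^\alpha(\M)$; since $\Cm_0=\mathcal{A}^{-\alpha}$ with $\mathcal{A}$ a second-order elliptic operator, $H^\alpha(\M)$ is contained in the Cameron--Martin space $\mathrm{Im}(\Cm_0^{1/2})$, and $\alpha>(5d+1)/4>1$ also supplies the Sobolev embeddings used below. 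Condition (iii) is the easiest: $P^*P$ is the $L^2(\M)$-orthogonal projection onto functions that are constant on each cell $U_i=T_n^{-1}(\{x_i\})$, each of mass $1/n$ and contained in $B_\M(x_i,\eps_n)$, so a cellwise Poincar\'e (averaging) inequality gives $\|(I-P^*P)u\|_{L^2(\M)}\lesssim\eps_n\|u\|_{H^1(\M)}$, and hence Assumption~\ref{assumption:errors}(iii) holds with $\psi_3(n)\asymp\eps_n$ after using $H^\alpha(\M)\hookrightarrow H^1(\M)$. The mean and covariance bounds will then follow immediately from Theorems~\ref{thm:posterior mean Theorem} and~\ref{thm:posterior covariance Theorem}, whose prefactors depend only on $\|\Cm_0\|_{op}$, $\|\F\|_{op}$, $\|m_0\|_{\H'}$ and the Kalman-gain norm, all finite and independent of $n$.

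The bulk of the work is Assumption~\ref{assumption:errors}(i): showing $\|\Cm_0-P^*A^{-\alpha}P\|_{op}\lesssim(\log n)^{c_d/4}n^{-1/(4d)}$ for $A=\Delta_n^\Theta+B_n$. I would build on the spectral convergence theory for graph Laplacians with spatially varying weights from \cite{sanz2022spde} (itself resting on \cite{garcia2020error}). Denote by $\{(\mu_k,\varphi_k)\}$ the eigenpairs of $\mathcal{A}$ on $L^2(\M)$ and by $\{(\mu_k^n,\varphi_k^n)\}$ those of $A$ on $\R^n_M$. Transporting functions between $L^2(\M)$ and $\R^n_M$ via $T_n$, those results provide per-mode estimates on the eigenvalue errors $|\mu_k-\mu_k^n|$ and the eigenvector errors $\|\varphi_k-P^*\varphi_k^n\|_{L^2(\M)}$ that grow polynomially in the mode index $k$ but are otherwise governed by a bias contribution of order $h_n$ and a variance contribution controlled by $\eps_n/h_n$ (up to square-root losses and logarithmic factors). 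Expanding the difference of the $\alpha$-th powers over eigenmodes and truncating at a cutoff frequency $K$, the low-frequency part is controlled by the per-mode estimates --- this is where the $\alpha$-th power amplifies the low-frequency eigenvector errors --- while the high-frequency tails of $\Cm_0$ and $P^*A^{-\alpha}P$ are bounded by $\sum_{k>K}\mu_k^{-\alpha}$, which by Weyl's law $\mu_k\asymp k^{2/d}$ decays polynomially in $K$ when $\alpha$ is large. Optimizing over $K$ and adopting the scaling $h_n\asymp\sqrt{(\log n)^{c_d}/n^{1/d}}$ yields the stated rate; the requirement that the polynomially growing eigenvector errors remain summable against the $\mu_k^{-\alpha}$ weights is what pins down the lower bound $\alpha>(5d+1)/4$. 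This establishes Assumption~\ref{assumption:errors}(i) with $\psi_1(n)\asymp(\log n)^{c_d/4}n^{-1/(4d)}$.

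For Assumption~\ref{assumption:errors}(ii), write $\F=\mathcal{O}\circ\G$ and $F=\mathcal{O}_nG_n$ and split
\[
\|\F-FP\|_{op}\le\|\mathcal{O}(\G-P^*G_nP)\|_{op}+\|(\mathcal{O}P^*-\mathcal{O}_n)G_nP\|_{op}.
\]
For the first term, $G_n=e^{-\Delta_n^{un}}$ is the graph heat semigroup at time one and approximates $\G$; its $L^2$-closeness follows, as in \cite{garcia2018continuum}, from the spectral convergence of $\Delta_n^{un}$ used above together with uniform boundedness and smoothing of both semigroups, and contributes a term of the same order as $\psi_1$. For the second term, note that $(\mathcal{O}_nw)_j=\int_{\Omega_j}P^*w$ with $\Omega_j=\bigcup_{k:\,x_k\in B_\delta(x_j)}U_k$, so $(\mathcal{O}P^*-\mathcal{O}_n)w$ is an integral of $P^*w$ over the symmetric difference $B_\delta(x_j)\triangle\Omega_j$, which by Proposition~\ref{thm:transportMaps} has measure $\lesssim\eps_n\,\mathrm{length}\bigl(\partial(B_\delta(x_j)\cap\M)\bigr)=\eps_nC_{\delta,j}$; evaluating at $w=G_nPu$ and using ultracontractivity of the graph heat semigroup to bound $\|P^*G_nPu\|_{L^\infty(\M)}\lesssim\|u\|_{L^2(\M)}$ uniformly in $n$, this contributes $O(\eps_n)$. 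This is precisely where the hypothesis on $\mathrm{length}(\partial(B_\delta(x_k)\cap\M))$ enters. Hence Assumption~\ref{assumption:errors}(ii) holds with $\psi_2(n)\asymp(\log n)^{c_d/4}n^{-1/(4d)}$, and Theorems~\ref{thm:posterior mean Theorem}--\ref{thm:posterior covariance Theorem} deliver the claimed mean and covariance estimates.

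The main obstacle is clearly Assumption~\ref{assumption:errors}(i). In the finite-element setting classical Galerkin theory hands one clean $O(h^2)$ operator-norm bounds on $\Cm_0-P^*A^{-\alpha}P$; the graph setting has no such black box, and one must instead carry the mode-dependent constants from the spectral convergence of $\Delta_n^\Theta$ through the $\alpha$-th power, weigh the resulting amplification of low-frequency errors against the eigenvalue-tail contribution, and keep the graph-side norms $\|P^*A^{-\alpha}P\|_{op}$ and $\|FP\|_{op}$ bounded uniformly in $n$ so that the constants in Theorems~\ref{thm:posterior mean Theorem} and~\ref{thm:posterior covariance Theorem} do not degenerate. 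It is this bookkeeping, rather than any single hard inequality, that dictates both the admissible range of $\alpha$ and the particular bandwidth scaling in the statement.
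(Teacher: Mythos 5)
Your proposal is correct and follows essentially the same route as the paper: verify Assumption \ref{assumption:errors} for the graph discretization using the transport maps of Proposition \ref{thm:transportMaps} and the spectral convergence of the graph Laplacian from \cite{sanz2022spde} (truncation at a cutoff frequency, per-mode eigenvalue/eigenvector swaps, Weyl-law tail bounds, with $\alpha>(5d+1)/4$ pinning down summability), then invoke Theorems \ref{thm:posterior mean Theorem} and \ref{thm:posterior covariance Theorem}. The only deviations are cosmetic: the paper takes $\H'=H^1(\M)$ and cites \cite[Lemma 12]{garcia2020error} where you reprove the projection bound via a cellwise Poincar\'e inequality, and for the observation-map discrepancy the paper uses Cauchy--Schwarz in $L^2$ against the measure of the symmetric difference (yielding an $O(\sqrt{\eps_n})$ but still subdominant term) rather than your $L^\infty$/ultracontractivity argument.
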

This result follows from Theorems \ref{thm:posterior mean Theorem} and \ref{thm:posterior covariance Theorem} upon verifying that the graph-based discretization satisfies Assumption \ref{assumption:errors} for our model problem. Verifying these assumptions will be the focus of the following subsections. In all that follows, we assume to be in a realization where the conclusion of Proposition \ref{thm:transportMaps} holds. 

\subsubsection{Graph-Based Approximation of Matérn-Type Prior Covariance}\label{ssec:errorcovariancegraph}
The following theorem shows that the covariance operator constructed in Section \ref{sec:graph discretization} satisfies Assumption \ref{assumption:errors} (ii). The proof can be found in Appendix \ref{AppendixGraphProofs}. 

\begin{theorem}[Operator-Norm Error for Graph-Based Prior Covariance Approximation]
    \label{thm:opNormGraphCov}
    Let $b(x)$ be Lipschitz, $\Theta(x)\in C^1(\M)$, and suppose that both are bounded below by positive constants. Let $\alpha>(5d+1)/4$, and take  
$h_n \asymp \sqrt{\frac{(\log n)^{c_d}}{n^{1/d}}}$.
   Then,
    \begin{align*}
        \|\Cm_0-P^*(\Delta_n^{\Theta}+B_n)^{-\alpha}P\|_{op}\lesssim (\log n)^{\frac{c_d}{4}}n^{-\frac{1}{4d}},
    \end{align*} 
    where $c_d=3/4$ if $d=2$ and $c_d=1/d$ otherwise.
\end{theorem}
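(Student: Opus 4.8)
The plan is to mimic the truncation argument of \cite{sanz2022spde}, extending it from the shifted Laplacian to the variable-coefficient operator $\mathcal{A}=-\mathrm{div}(\Theta\nabla\,\cdot\,)+b$ and its graph surrogate $A=\Delta_n^{\Theta}+B_n$. Let $\{(\lambda_i,\psi_i)\}_{i\ge 1}$ be the ascending eigenpairs of $\mathcal{A}$ on $L^2(\M)$ and $\{(\lambda_n^{(i)},\psi_n^{(i)})\}_{i=1}^n$ those of $A$ on $\R^n_M$. Since $P^*$ is an isometry onto $\V\subset\H$ and $PP^*=I$, the family $\{P^*\psi_n^{(i)}\}$ is orthonormal in $\H$, and functional calculus gives $\Cm_0=\mathcal{A}^{-\alpha}=\sum_{i\ge 1}\lambda_i^{-\alpha}\,\psi_i\otimes\psi_i$ while $P^*A^{-\alpha}P=\sum_{i=1}^n(\lambda_n^{(i)})^{-\alpha}\,(P^*\psi_n^{(i)})\otimes(P^*\psi_n^{(i)})$. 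Fixing a cutoff $K=K_n\to\infty$ to be chosen, I split
\begin{align*}
\Cm_0 - P^* A^{-\alpha}P
&= \sum_{i\le K}\bigl(\lambda_i^{-\alpha}-(\lambda_n^{(i)})^{-\alpha}\bigr)\,\psi_i\otimes\psi_i
  + \sum_{i\le K}(\lambda_n^{(i)})^{-\alpha}\bigl(\psi_i\otimes\psi_i - P^*\psi_n^{(i)}\otimes P^*\psi_n^{(i)}\bigr) \\
&\quad + \sum_{i> K}\lambda_i^{-\alpha}\,\psi_i\otimes\psi_i
  - \sum_{i> K}(\lambda_n^{(i)})^{-\alpha}\,P^*\psi_n^{(i)}\otimes P^*\psi_n^{(i)},
\end{align*}
and call the four sums $\mathrm{(I)}$, $\mathrm{(II)}$, $\mathrm{(III)}$, $\mathrm{(IV)}$.

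Next I would bound the four terms. Because the $\psi_i\otimes\psi_i$ are mutually orthogonal projections, $\|\mathrm{(I)}\|_{op}=\max_{i\le K}|\lambda_i^{-\alpha}-(\lambda_n^{(i)})^{-\alpha}|\lesssim\max_{i\le K}\lambda_i^{-\alpha-1}|\lambda_i-\lambda_n^{(i)}|$ once $n$ is large enough that $\lambda_n^{(i)}\ge\lambda_i/2$ for all $i\le K$. For the tails, Weyl's law for the elliptic operator $\mathcal{A}$ on the closed manifold $\M$ gives $\lambda_i\gtrsim i^{2/d}$, so $\|\mathrm{(III)}\|_{op}=\lambda_{K+1}^{-\alpha}\lesssim K^{-2\alpha/d}$; the matching discrete lower bound $\lambda_n^{(i)}\gtrsim i^{2/d}$, valid for $i$ up to a threshold $c_n\to\infty$ (the uniform discrete Weyl law underlying the spectral-convergence theory of \cite{garcia2020error,burago2015graph}), together with orthonormality of $\{P^*\psi_n^{(i)}\}$, gives $\|\mathrm{(IV)}\|_{op}=\max_{i>K}(\lambda_n^{(i)})^{-\alpha}\lesssim K^{-2\alpha/d}$ provided $K<c_n$. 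Finally, the triangle inequality and $\|a\otimes a-b\otimes b\|_{op}\le\|a-b\|(\|a\|+\|b\|)$ yield $\|\mathrm{(II)}\|_{op}\lesssim\sum_{i\le K}(\lambda_n^{(i)})^{-\alpha}\,\|\psi_i-P^*\psi_n^{(i)}\|_{L^2(\M)}$.

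The quantitative heart is an index-explicit spectral convergence estimate for $\Delta_n^{\Theta}+B_n$, extending the plain-Laplacian estimates underlying \cite{sanz2022spde}: on the event of Proposition \ref{thm:transportMaps}, for $i\le c_n$ one expects bounds of the form
\[
|\lambda_i-\lambda_n^{(i)}|\;\lesssim\;\lambda_i^{\,\beta_1}\Bigl(h_n+\tfrac{\eps_n}{h_n}\Bigr),\qquad
\|\psi_i-P^*\psi_n^{(i)}\|_{L^2(\M)}\;\lesssim\;\lambda_i^{\,\beta}\Bigl(h_n+\sqrt{\tfrac{\eps_n}{h_n}}\Bigr),
\]
with $\eps_n\lesssim(\log n)^{c_d}/n^{1/d}$ from Proposition \ref{thm:transportMaps}. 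The balancing choice $h_n\asymp\sqrt{\eps_n}$ makes both right-hand sides dominated by $\lambda_i^{\,\beta}\sqrt{h_n}\asymp\lambda_i^{\,\beta}\,\eps_n^{1/4}$; this is the source of the final $\eps_n^{1/4}$ rate. Plugging into the bound for $\mathrm{(II)}$ and using $\lambda_i\asymp i^{2/d}$, $\|\mathrm{(II)}\|_{op}\lesssim\eps_n^{1/4}\sum_{i\le K}i^{(2\beta-2\alpha)/d}$, which stays $O(\eps_n^{1/4})$ \emph{uniformly in $K$} exactly when $\alpha>\beta+d/2$; tracking the value $\beta=(3d+1)/4$ coming from the graph-Laplacian theory gives precisely the hypothesis $\alpha>(5d+1)/4$. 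The term $\mathrm{(I)}$ is of lower order $O(h_n)=O(\eps_n^{1/2})$, and choosing $K_n$ with $K_n^{-2\alpha/d}\lesssim\eps_n^{1/4}$ — feasible within $K_n<c_n$ since this only forces $K_n$ to be a small power of $n$ — renders $\mathrm{(III)},\mathrm{(IV)}$ negligible. Summing the four contributions gives $\|\Cm_0-P^*(\Delta_n^{\Theta}+B_n)^{-\alpha}P\|_{op}\lesssim\eps_n^{1/4}\asymp(\log n)^{c_d/4}n^{-1/(4d)}$.

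The main obstacle is the estimate in the previous paragraph: establishing the index-explicit spectral convergence rates for the \emph{variable-coefficient} graph operator. One must re-run the Dirichlet-form discretization argument (transporting along the maps of Proposition \ref{thm:transportMaps}, with the kernel-scale bias of order $h_n$) for the reweighted edge weights $\tilde{W}_{ij}=W_{ij}\sqrt{\Theta(x_i)\Theta(x_j)}$ and the zeroth-order term $B_n$, showing that $\Theta\in C^1(\M)$ and Lipschitz $b$ suffice to absorb the extra remainder terms without degrading the rate, and — crucially — pinning down the exponent $\beta$ that governs how the estimate deteriorates with the eigenvalue index, since this is exactly what fixes the threshold $(5d+1)/4$ on $\alpha$ (and, implicitly, the surrogate observation map's boundary regularity assumption enters when the same machinery is later used for the forward model, not here). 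A secondary and routine point is that the locally-constant interpolation maps $P,P^*$ built on the partition $\{U_i\}$ with $\mathrm{diam}(U_i)\le\eps_n$ contribute only $O(\eps_n)$ errors, well below the $O(\eps_n^{1/4})$ target.
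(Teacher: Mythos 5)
Your proposal is correct and follows essentially the same route as the paper's proof: a spectral truncation at a cutoff $k_n$, separate control of the eigenvalue error, the eigenfunction error, and the two tails via Weyl's law, with the balancing $h_n\asymp\sqrt{\eps_n}$ producing the $\eps_n^{1/4}$ rate and the summability condition $\alpha>\beta+d/2$ yielding exactly the threshold $(5d+1)/4$. The index-explicit spectral estimates you flag as the main obstacle are precisely Propositions \ref{thm:graphEW} and \ref{thm:graphEV}, recalled from \cite{sanz2022spde} already for the variable-coefficient operator, and the eigenfunction rate $j^{3/2}\bigl(\eps_n/h_n+h_n\sqrt{\lambda^{(j)}}\bigr)^{1/2}$ there matches your anticipated exponent $\beta=(3d+1)/4$.
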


\begin{remark}
The proof of Theorem \ref{thm:opNormGraphCov} in Appendix \ref{AppendixGraphProofs} follows that of Theorem D.1 in \cite{sanz2022spde}. However, since we are working with the covariance operator (as opposed to the square root of the covariance operator as is the case when sampling the Mat\'ern fields), we get convergence rates for $\alpha>(5d+1)/4$ as opposed to $\alpha>(5d+1)/2$. The rates of convergence are still the same, since the error is still dominated by the error in approximating the eigenfunctions.
\end{remark}
We have shown that the graph-based finite-dimensional approximation to the prior covariance proposed in Subsection \ref{sec:graph discretization} satisfies Assumption \ref{assumption:errors} (i) with $\psi_1(n)=(\log n)^{\frac{c_d}{4}}n^{-\frac{1}{4d}}$, where $c_d=3/4$ if $d=2$ and $c_d=1/d$ otherwise.
\subsubsection{Graph-Based Approximation of Heat Forward Model}\label{ssec:errorforwardgraph}
We show that the graph-based approximation of the heat forward model satisfies with the surrogate observation operator satisfies Assumption \ref{assumption:errors} (iii). The proof can be found in Appendix \ref{AppendixGraphProofs}.

\begin{theorem}[Operator-Norm Error for Graph-Based Forward Map Approximation]\label{thm:forwardgraph}
     Let $F=\mathcal{O}_nG_n$ be the forward model defined in Subsection \ref{sec:graph discretization}. Take $h_n \asymp \sqrt{\frac{(\log n)^{c_d}}{n^{1/d}}}.$ Then,
    \begin{align}
        \|\F-P^*FP\|_{op}\lesssim (\log n)^{\frac{c_d}{4}}n^{-\frac{1}{4d}}.
    \end{align}
\end{theorem}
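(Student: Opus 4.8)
The plan is to bound $\|\F - P^*FP\|_{op}$ by decomposing the error according to the structure $\F = \mathcal{O}\circ\G$ and $F = \mathcal{O}_n G_n$, splitting into (a) the error in replacing the continuum heat semigroup $\G$ by the graph-based heat semigroup $G_n$, and (b) the error in replacing the local-average observation map $\mathcal{O}$ by the surrogate point-cloud observation map $\mathcal{O}_n$. More precisely, I would write
\begin{align*}
\|\F - P^*FP\|_{op} &\le \|\mathcal{O}\G - P^*\mathcal{O}_n G_n P\|_{op} \\
&\le \|\mathcal{O}\G - \mathcal{O}(P^*G_n P)\|_{op} + \|\mathcal{O}(P^*G_n P) - P^* \mathcal{O}_n G_n P\|_{op},
\end{align*}
(treating $\mathcal{O}_n$ as acting on $\V$ via the identification $P^*$), and then bound the first term using $\|\mathcal{O}\|_{op}\|\G - P^* G_n P\|_{op}$ since $\mathcal{O}$ is bounded from $L^2(\M)$ to $\R^{d_y}$, and the second term by comparing the exact local average over $B_\delta(x_j)\cap\M$ with the Monte-Carlo/quadrature-type sum $\frac1n\sum_{k:x_k\in B_\delta(x_j)}[\,\cdot\,]_k$ on the point cloud.

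For step (a), the key is spectral convergence of the graph Laplacian $\Delta_n^{un}$ to the Laplace-Beltrami operator. The argument should mirror the treatment of the graph prior covariance in the proof of Theorem \ref{thm:opNormGraphCov} (which follows \cite{sanz2022spde}): decompose both $\G$ and $G_n$ in terms of their (continuum vs.\ graph) eigenpairs $\{(\lambda^{(i)},\psi^{(i)})\}$ and $\{(\lambda_n^{(i)},\psi_n^{(i)})\}$, split the sum at a truncation level $k_n\to\infty$, and use that (i) for low modes $i\le k_n$ the graph eigenvalues and (interpolated) eigenfunctions converge at the rate $(\log n)^{c_d/4}n^{-1/(4d)}$ with the scaling $h_n\asymp\sqrt{(\log n)^{c_d}/n^{1/d}}$, and (ii) for high modes $i>k_n$ the exponential smoothing factor $e^{-\lambda^{(i)}}$ (resp.\ $e^{-\lambda_n^{(i)}}$) makes the tail negligible — in fact the heat semigroup is far more forgiving than the $\alpha$-th inverse power appearing in the covariance, so no lower bound on a regularity parameter $\alpha$ is needed here. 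One also needs the transport-map machinery of Proposition \ref{thm:transportMaps} to pass between functions on $\M$ and on $\M_n$, controlling $\eps_n\lesssim (\log n)^{c_d}/n^{1/d}$, which is why the square-root scaling of $h_n$ produces the $n^{-1/(4d)}$ rate.

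For step (b), the point is that $v^\delta(x_j) = \int_{B_\delta(x_j)\cap\M} v$ while $\bar v^\delta_{n,j} = \frac1n\sum_{k:x_k\in B_\delta(x_j)\cap\M_n}[v_n(1)]_k$ is a Riemann-type sum approximation using the partition $\{U_i\}$ (each of mass $1/n$). The error has two sources: the discrepancy between $v_n(1)$ and $v(\cdot,1)$ (already controlled by step (a), after applying the piecewise-constant interpolation $P^*$), and the geometric error from the set $\{k: x_k\in B_\delta(x_j)\}$ versus $B_\delta(x_j)\cap\M$ — i.e.\ cells $U_i$ that straddle $\partial(B_\delta(x_j)\cap\M)$. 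The latter is where the assumed bound $\mathrm{length}(\partial(B_\delta(x_j)\cap\M)) = C_{\delta,j}$ enters: the number of boundary cells is $O(C_{\delta,j}\,\eps_n^{-(d-1)}\cdot n^{-1})$ in measure... more carefully, the symmetric difference of the two regions has volume $\lesssim C_{\delta,j}\,\eps_n$, so this contributes an $O(\eps_n^{1/2}) = O((\log n)^{c_d/2}n^{-1/(2d)})$-type term after a Cauchy-Schwarz bound against $\|v_n(1)\|$, which is dominated by the $n^{-1/(4d)}$ rate from step (a).

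The main obstacle I anticipate is step (a): carefully establishing the spectral convergence rate of the nonstationary graph Laplacian eigenpairs with the precise $(\log n)^{c_d/4}n^{-1/(4d)}$ dependence, and choosing the truncation level $k_n$ so that both the low-mode approximation error and the high-mode semigroup tail are simultaneously controlled at this rate. This requires quantitative eigenfunction-perturbation estimates (Davis-Kahan-type arguments combined with the $L^\infty$-to-$L^2$ transfer via transport maps) together with Weyl-type lower bounds on $\lambda^{(i)}$ to make the heat-kernel tail summable — essentially reusing, with simplifications, the machinery already developed for Theorem \ref{thm:opNormGraphCov}. The bookkeeping between the three norms ($L^2(\M)$, $L^2(\gamma_n)$, and the weighted $\R^n_M$ with $M = \frac1n I_n$) and the identification of $\mathcal{O}_n$ with an operator on $\V$ must also be handled with care, but this is routine given the framework in Subsection \ref{ssec:computationalframework}.
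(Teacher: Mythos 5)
Your proposal is correct and follows essentially the same route as the paper's proof: the same splitting into $\|\mathcal{O}\|_{op}\|\G-P^*G_nP\|_{op}$ plus the observation-map discrepancy, the same truncated spectral decomposition with Weyl's law and the eigenpair convergence rates from \cite{sanz2022spde} for the semigroup term, and the same boundary-cell/transport-map argument giving a symmetric-difference volume $\lesssim C_{\delta,j}\eps_n$ and hence an $O(\eps_n^{1/2})$ contribution dominated by the $(\log n)^{c_d/4}n^{-1/(4d)}$ rate. Your observation that the exponential smoothing of the heat semigroup removes any lower-bound requirement on $\alpha$ is also consistent with the paper.
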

We have shown that the graph-based finite-dimensional approximation to the forward model proposed in Subsection \ref{sec:graph discretization} satisfies Assumption \ref{assumption:errors} (ii) with $\psi_2(n)=(\log n)^{\frac{c_d}{4}}n^{-\frac{1}{4d}}$, where $c_d=3/4$ if $d=2$ and $c_d=1/d$ otherwise.
\subsubsection{Graph-Based Orthogonal Projection}\label{ssec:errorprojectiongraph}
Finally, we verify that the orthogonal projection of the prior mean function onto the subspace $\V=\text{span}\{\mathbb{1}_{U_i}(x)\}_{i=1}^n$ converges as in Assumption  \ref{assumption:errors} (iii). Since the mean function must lie in the Cameron-Martin space $E=\text{Im}(\Cm_0^{1/2})$, we necessarily have that $m_0(x)\in H^1(\M).$ The following result from \cite[Lemma 12]{garcia2020error} quantifies the convergence rate of the projected mean function:
\begin{lemma}[Lemma 12 in \cite{garcia2020error}]
    There exists a constant $c_{\M}$ independent of $n$ such that, for every $m_0\in H^1(\M)$, we have
    \begin{align}
        \|m_0-P^*Pm_0\|_{L^2(\M)}\leq c_{\M}\eps_n \|m_0\|_{H^1(\M)}\lesssim \frac{(\log n)^{c_d}}{n^{1/d}}\|m_0\|_{H^1(\M)}.
    \end{align}
\end{lemma}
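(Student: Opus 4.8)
The plan is to identify $P^*P$ with a conditional expectation and then control the resulting local oscillations of $m_0$ by a Poincar\'e-type inequality, exactly as in Lemma~12 of \cite{garcia2020error}. Since the graph basis is $\oldphi_i=\mathbb{1}_{U_i}$ with $\{U_i\}$ a partition of $\M$ into cells of equal mass $\gamma(U_i)=1/n$, the subspace $\V$ consists precisely of the functions that are constant on each $U_i$, so the orthogonal projection $P^*P:L^2(\M)\to\V$ is the conditional expectation $\Expect^{\gamma}[\,\cdot\mid \sigma(U_1,\dots,U_n)]$; in particular $(P^*Pm_0)\big|_{U_i}$ equals the cell average $\overline{m}_{0,i}:=n\int_{U_i}m_0\,d\gamma$. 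By orthogonality and then Jensen's inequality (with respect to the probability measure $n\,\gamma|_{U_i}$),
\[
\|m_0-P^*Pm_0\|_{L^2(\M)}^2=\sum_{i=1}^n\int_{U_i}\bigl|m_0(x)-\overline{m}_{0,i}\bigr|^2\,d\gamma(x)\;\le\; n\sum_{i=1}^n\int_{U_i}\!\!\int_{U_i}\bigl|m_0(x)-m_0(y)\bigr|^2\,d\gamma(y)\,d\gamma(x).
\]

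The next step is to feed in the geometry from Proposition~\ref{thm:transportMaps}: since $U_i=T_n^{-1}(\{x_i\})\subset B_\M(x_i,\eps_n)$, we have $\mathrm{diam}_\M(U_i)\le 2\eps_n$ with $\eps_n\lesssim(\log n)^{c_d}n^{-1/d}$. For $\eps_n$ below the injectivity radius, any $x,y\in U_i$ are joined by a minimizing geodesic $\gamma_{xy}$ of length $\le 2\eps_n$ lying in a fixed enlargement $\widehat U_i\subset B_\M(x_i,C\eps_n)$, so writing $m_0(x)-m_0(y)=-\int_0^{d_\M(x,y)}\langle\nabla m_0(\gamma_{xy}(s)),\dot\gamma_{xy}(s)\rangle\,ds$ and applying Cauchy--Schwarz yields $|m_0(x)-m_0(y)|^2\le 2\eps_n\int_0^{2\eps_n}|\nabla m_0(\gamma_{xy}(s))|^2\,ds$. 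Integrating this over $x,y\in U_i$, passing to geodesic polar coordinates, and using that the exponential map and its Jacobian are uniformly controlled on $\M$ (bounded geometry: curvature and lower injectivity-radius bounds, volume doubling), one obtains a local Poincar\'e estimate $\int_{U_i}|m_0-\overline{m}_{0,i}|^2\,d\gamma\le C_\M\,\eps_n^2\int_{\widehat U_i}|\nabla m_0|^2\,d\gamma$ with $C_\M$ depending only on $d$ and the geometric constants of $\M$. Summing over $i$, a bound on the overlap multiplicity $\sup_{z}\#\{i:z\in\widehat U_i\}$ reduces $\sum_i\int_{\widehat U_i}|\nabla m_0|^2\,d\gamma$ to a constant times $\|\nabla m_0\|_{L^2(\M)}^2$, giving $\|m_0-P^*Pm_0\|_{L^2(\M)}\lesssim\eps_n\|m_0\|_{H^1(\M)}$; substituting the rate for $\eps_n$ finishes the proof.

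The technical heart — and the reason one invokes \cite[Lemma~12]{garcia2020error} rather than reproving it here — is the combination of the last two points: the cells $U_i$ are pre-images of a transport map and need not be convex or even regularly shaped, so the local Poincar\'e inequality must be set up on the enlarged sets $\widehat U_i$ with a constant uniform in $i$ and $n$, and the overlap of the $\widehat U_i$ must be controlled sharply, using the \emph{local} displacements $d_\M(x,T_n(x))$ rather than only the global bound $\eps_n$, so as not to incur spurious logarithmic factors. All remaining ingredients (the identification of $P^*P$ with a conditional expectation, the orthogonality identity, and the substitution $\eps_n\lesssim(\log n)^{c_d}n^{-1/d}$) are routine; moreover, for the conclusion of Theorem~\ref{thm:Graph Posterior Theorem} even a version of this estimate with additional logarithmic factors would suffice, since the projection error is of strictly lower order than the eigenfunction-approximation errors controlled in Theorems~\ref{thm:opNormGraphCov} and \ref{thm:forwardgraph}.
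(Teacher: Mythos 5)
Your argument is correct and follows essentially the same route as the source: the paper itself gives no proof of this lemma, importing it verbatim from \cite{garcia2020error}, and your reconstruction --- identifying $P^*P$ with averaging over the transport cells $U_i$, bounding the resulting oscillation via a local Poincar\'e inequality on enlargements $\widehat U_i\subset B_\M(x_i,C\eps_n)$ with uniformly controlled geometry, and summing with bounded overlap --- is exactly the mechanism behind Lemma~12 there. You also correctly isolate the only genuinely delicate points (irregularity of the cells $U_i$ and uniformity of the Poincar\'e constant in $i$ and $n$), so nothing essential is missing.
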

We see that Assumption \ref{assumption:errors} (iii) holds for the graph-based discretization with $\psi_3(n)=\frac{(\log n)^{c_d}}{n^{1/d}}$  and $\H'=H^1(\M)$. For large $n$, this term will be dominated by $\psi_1(n)$ and $\psi_2(n).$

We have now verified that Assumption \ref{assumption:errors} holds for the graph-based discretization, proving Theorem \ref{thm:Graph Posterior Theorem}.

%%%%%%%%%%%%%%%%%%%%%%%%%%%%%%%%%%%%%%%%%%%%%%%%%%%%%%%%%%%%%%%%%%
\section{Sample Size Requirements for Ensemble Kalman Updates}\label{sec:ensembleKalmanupdates}
In this section, we formulate the ensemble Kalman update in weighted inner product space and show that the effective dimension of the discretized problem is controlled by the effective dimension of the continuum problem, which is finite.

\subsection{Ensemble Approximation to Finite-Dimensional Posterior}
To implement an ensemble Kalman approximation to the finite-dimensional Gaussian posterior given by \eqref{eq:finitePost}, we need to draw samples from the Gaussian prior distribution, $\Nc(\vec{m}_0,C_0)$ given by \eqref{eq:finitePrior}. To do so, we draw $\xi^{(j)}\sim \Nc(0,I),$ and compute samples 
\begin{align}\label{eq:priorDraws}
    u^{(j)}=\vec{m}_0+L\xi^{(j)}, \quad u^{(j)}\sim \mathcal{N}(\vec{m}_0,C_0),
\end{align}
for $1\leq j \leq J$, 
where $L$ is a linear map from $\R^n\to \R^n_M$ such that $C_0=LL^{\sharp}=LL^TM.$ Recall that given samples $u^{(1)},\ldots,u^{(J)}\sim \Nc(0,\Cm)$ from a Gaussian measure in a Hilbert space $\H$, the sample covariance operator is defined as the operator $\widehat{\hspace{0pt} \Cm}:\H\to \H$
$$
\widehat{\hspace{0pt} \Cm}u:=\frac{1}{J-1}\sum_{j=1}^J\langle u^{(j)},u\rangle_{\H}u^{(j)}, \quad u\in \H.
$$
As such, the sample covariance operator on $\R_M^n$ is given by 
\begin{align*}
    \widehat{C}_0=\frac{1}{J-1}\sum_{j=1}^J(u^{(j)}-\widehat{m}_0)(u^{(j)}-\widehat{m}_0)^TM,
\end{align*}
where $\widehat{m}_0=\frac{1}{J}\sum_{j=1}^J u^{(j)}$ is the sample mean. The perturbed observation ensemble Kalman update transforms each sample from the prior ensemble via
\begin{align}\label{eq:POUpdate}
v^{(j)}=u^{(j)}+\widehat{C}_0F^{\natural}\left( F\widehat{C}_0F^{\natural}+\Gamma\right)^{-1}\left(y-Fu^{(j)}+\eta^{(j)} \right), \quad \eta^{(j)}\iid \Nc(0,\Gamma).
\end{align}
We then output the sample mean and covariance of the transformed ensemble, given by
\begin{align}
\label{eq:enspostmean}
\widehat{m}_{\text{post}}=\frac{1}{J}\sum_{j=1}^Jv^{(j)}, \text{ and } \widehat{C}_{\text{post}}&=\frac{1}{J-1}\sum_{J=1}^{J}(v^{(j)}-\widehat{m}_{\text{post}})(v^{(j)}-\widehat{m}_{\text{post}})^TM.
\end{align}

\subsection{Ensemble Kalman Approximation: Error Analysis}
We now want to derive non-asymptotic expectation bounds for the error in the ensemble approximation to the infinite-dimensional posterior. We consider the errors
\begin{align} 
    \widehat{\eps}_m &=\Expect\|m_{\text{post}}-P^*\widehat{m}_{\text{post}}\|_{\H} ,\label{eq:ensMeanErr} \\
\hat{\eps}_C &=\Expect\|\Cm_{\text{post}}-P^*\hat{C}_{\text{post}}P\|_{op}.   \label{eq:ensCovErr}
\end{align}
Controlling these errors will amount to controlling the error between the ensemble approximations and the finite-dimensional approximations using the theory from \cite{ghattas2022non} and then applying our results from Section \ref{sec:unifiederroranalysis}. Since the dimension of the discretized covariance matrices increases as the mesh is refined, we will require bounds that do not have an explicit dependence on the dimension. To do so, we define the \textit{effective dimension} of a mapping $C_0:\R_M^n\to \R_M^n$ to be 
\begin{align}\label{eq:effectiveDim}
    r_M(C_0):=\frac{\text{Tr}(C_0)}{\|C_0\|_{op}},
\end{align}
where $\|\cdot\|_{op}$ is the operator norm from $\R_M^n$ to $\R_M^n.$ When the eigenvalues of $C_0$ decay rapidly, the effective dimension $r_M(C_0)$ is  a better measure of dimension than the nominal state dimension $n$ \cite{tropp2015introduction}. We will show in Subsection \ref{sec:EffectiveDimensionBound} that for the finite element discretization, the effective dimension of the discretized operator is controlled by the effective dimension of the continuum operator, defined to be $r(\Cm_0)=\frac{\text{Tr}(\Cm_0)}{\|\Cm_0\|_{op}}$, where $\|\cdot\|_{op}$ denotes the operator norm from $\H$ to $\H$. Since the prior covariance operator must be chosen to be bounded and trace-class, this quantity is necessarily finite. As such, the error bounds that we now derive for the ensemble approximation will not degenerate as the mesh is refined, despite the state dimension increasing arbitrarily.

\begin{theorem}[Mean Ensemble Approximation Error]\label{thm:postMeanEnsTheorem}
    Let $\widehat{\eps}_m$ be the expected error between the infinite-dimensional posterior mean and the ensemble posterior mean with an ensemble of size $J$, as defined in \eqref{eq:ensMeanErr}. Assume that the finite-dimensional approximations satisfy Assumption \ref{assumption:errors}, that for $n$ sufficiently large $r_M(C_0)\leq c r(\Cm_0)$ for some constant $c$ independent of $n$, and that $J\geq r_M(C_0)$. Then, it holds that 
    \begin{align}
        \widehat{\eps}_m\leq r_1'\psi_1(n)+r_2'\psi_2(n)+r'_3\psi_3(n)+r_4'\frac{1}{\sqrt{J}},
    \end{align}
    where the constants $r_i'$ are independent of  $n$ and $J.$
\begin{proof}
By the triangle inequality, we decompose the expected  error as
\begin{align*}
    \Expect\|m_{\text{post}}(x)-P^*\widehat{m}_{\text{post}(x)}\|_{L^2(\D)}\leq \Expect\|m_{\text{post}}(x)-P^*\vec{m}_{\text{post}}(x)\|_{L^2}+\Expect\|\widehat{m}_{\text{post}}-\vec{m}_{\text{post}}\|_M.
\end{align*}
The first term on the right-hand side is deterministic and is bounded by Theorem \ref{thm:posterior mean Theorem}:
\begin{align*}
    \Expect\|m_{\text{post}}(x)-P^*\vec{m}_{\text{post}}(x)\|_{L^2}=\|m_{\text{post}}(x)-P^*\vec{m}_{\text{post}}(x)\|_{L^2}\lesssim r_1'\psi_1(n)+r_2'\psi_2(n)+r_3'\psi_3(n).
\end{align*}
For the second term on the right-hand side, we recall \cite[Theorem 3.3]{ghattas2022non}, which guarantees the following:
\begin{align}
    \Expect\|\widehat{m}_{\text{post}}-\vec{m}_{\text{post}}\|_M\lesssim c_n'\sqrt{\frac{r_M(C_0)}{J}}+c_n''\sqrt{\frac{r_2(\Gamma)}{J}},
\end{align}
where
$c_n'=\left(\|C_0\|_{op}^{1/2}\vee \|C_0\|_{op}^2 \right)\left(\|F\|_{op}\vee \|F\|_{op}^4 \right)\left(\|\Gamma^{-1}\|_{op}\vee \|\Gamma^{-1}\|_{op}^2 \right)\left(1\vee \|y-F\vec{m}_0\|_2 \right)$, and $c_n''=\|F\|_{op}\|\Gamma^{-1}\|_{op}\|\Gamma\|_{op}^{1/2}\|C_0\|_{op}$. The quantity $r_2(\Gamma)$ is the effective dimension of $\Gamma$ in the usual Euclidean inner product space. The quantities $c_n'$ and $c_n''$ depend on $n;$ however, under Assumption \ref{assumption:errors} each of the discretized operators appearing in the constants can be made arbitrarily close to their continuum counterparts when $n$ is sufficiently large. In particular, we can take $n$ to be large enough such that the following two bounds hold:
\begin{align*}
c_n' &\leq c'=2\left(\|\Cm_0\|_{op}^{1/2}\vee \|\Cm_0\|_{op}^2 \right)\left(\|\F\|_{op}\vee \|\F\|_{op}^4 \right)\left(\|\Gamma^{-1}\|_{op}\vee \|\Gamma^{-1}\|_{op}^2 \right)\left(1\vee \|y-\F m_0\|_2 \right), \\
    c_n'' &\leq c''=2\|\F\|_{op}\|\Gamma^{-1}\|_{op}\|\Gamma\|_{op}^{1/2}\|\Cm_0\|_{op}.
\end{align*}
From these bounds combined with our assumption that $r_M(C_0)\leq cr(\Cm_0)$, we take $r_4'=\sqrt{c}c'\sqrt{r(\Cm_0)}+c''\sqrt{r(\Gamma)}$, and the result follows.
\end{proof}
\end{theorem}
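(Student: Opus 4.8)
The plan is to decompose the expected error $\widehat\eps_m = \Expect\|m_{\text{post}} - P^*\widehat m_{\text{post}}\|_{\H}$ by inserting the deterministic finite-dimensional posterior mean $P^*\vec m_{\text{post}}$ and applying the triangle inequality, giving
\[
\widehat\eps_m \le \|m_{\text{post}} - P^*\vec m_{\text{post}}\|_{\H} + \Expect\|P^*(\widehat m_{\text{post}} - \vec m_{\text{post}})\|_{\H}.
\]
Since $P^*$ is an isometry from $\R_M^n$ into $\H$ (because $P^*P$ is the orthogonal projection onto $\V$ and $PP^*=I$, so $\|P^*\vec v\|_\H = \|\vec v\|_M$), the second term equals $\Expect\|\widehat m_{\text{post}} - \vec m_{\text{post}}\|_M$. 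The first term is exactly the quantity bounded in Theorem \ref{thm:posterior mean Theorem}, contributing $r_1'\psi_1(n) + r_2'\psi_2(n) + r_3'\psi_3(n)$ with constants as in that theorem.

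Next I would invoke the ensemble error bound from \cite[Theorem 3.3]{ghattas2022non} applied in the weighted space $\R_M^n$, which controls $\Expect\|\widehat m_{\text{post}} - \vec m_{\text{post}}\|_M$ by $c_n'\sqrt{r_M(C_0)/J} + c_n''\sqrt{r_2(\Gamma)/J}$, where $c_n'$ and $c_n''$ are explicit polynomial expressions in $\|C_0\|_{op}$, $\|F\|_{op}$, $\|\Gamma^{-1}\|_{op}$, $\|\Gamma\|_{op}$, and $\|y - F\vec m_0\|_2$. The point is that these constants depend on $n$ only through discretized operators, each of which converges to its continuum counterpart under Assumption \ref{assumption:errors}: $\|F\|_{op}\to\|\F\|_{op}$ via part (ii), $\|C_0\|_{op}\to\|\Cm_0\|_{op}$ via part (i) (using $\|C_0\|_{op} = \|P^*C_0P\|_{op}$ since $P^*$ is an isometry and $P$ a co-isometry), and $\|y - F\vec m_0\|_2\to\|y-\F m_0\|_2$ as already shown in \eqref{eq:aux4term}. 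Hence for $n$ large enough, $c_n' \le c'$ and $c_n'' \le c''$ for the explicit $n$-independent bounds displayed in the statement. Finally, combining with the hypothesis $r_M(C_0)\le c\,r(\Cm_0)$ gives $\Expect\|\widehat m_{\text{post}} - \vec m_{\text{post}}\|_M \le (\sqrt{c}\,c'\sqrt{r(\Cm_0)} + c''\sqrt{r(\Gamma)})\,J^{-1/2}$, so we may take $r_4' = \sqrt{c}\,c'\sqrt{r(\Cm_0)} + c''\sqrt{r(\Gamma)}$, which is independent of $n$ and $J$.

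The step I expect to be the main obstacle — or at least the one requiring the most care — is checking that the hypotheses of \cite[Theorem 3.3]{ghattas2022non} are genuinely satisfied by the perturbed-observation update \eqref{eq:POUpdate} run in the weighted space $\R_M^n$ rather than in standard Euclidean space, and that the notion of effective dimension $r_M(C_0)$ in \eqref{eq:effectiveDim} matches the one used there. Because the adjoint is $F^\natural = M^{-1}F^T$ rather than $F^T$, one must verify that the cited theorem applies verbatim once all inner products, adjoints, and traces are interpreted in the $M$-weighted geometry; the trace $\Tr(C_0)$ and operator norm $\|C_0\|_{op}$ in \eqref{eq:effectiveDim} are the intrinsic (basis-independent) quantities for the operator $C_0:\R_M^n\to\R_M^n$, so this is really a matter of confirming the invariance of the bound under the change of inner product. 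The condition $J \ge r_M(C_0)$ is assumed, so no subsampling subtlety arises. Everything else is routine: the isometry property of $P^*$, the triangle inequality, and passing $n\to\infty$ in the explicit constants using Assumption \ref{assumption:errors}.
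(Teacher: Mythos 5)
Your proposal is correct and follows essentially the same route as the paper's proof: a triangle-inequality split into the deterministic discretization error (controlled by Theorem \ref{thm:posterior mean Theorem}) and the ensemble sampling error (controlled by Theorem 3.3 of \cite{ghattas2022non}), followed by passage to $n$-independent constants via Assumption \ref{assumption:errors} and the effective-dimension hypothesis. Your explicit observation that $P^*$ is an isometry from $\R^n_M$ into $\H$ (since $PP^*=I$), which the paper uses only implicitly, is a welcome clarification but does not change the argument.
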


\begin{theorem}[Covariance Ensemble Approximation Error]\label{thm:postCovEnsTheorem}
    Let $\widehat{\eps}_C$ be the expected error between the infinite-dimensional posterior covariance and the ensemble posterior covariance with an ensemble of size $J$, as defined in \eqref{eq:ensMeanErr}. Assume that the finite-dimensional approximations satisfy Assumption \ref{assumption:errors}, that for $n$ sufficiently large $r_M(C_0)\leq c r(\Cm_0)$ for some constant $c$ independent of $n$, and that $J\geq r_M(C_0)$. Then, it holds that 
    \begin{align}
        \widehat{\eps}_C\leq r_1'\psi_1(n)+r_2'\psi_2(n)+r_3'\frac{1}{\sqrt{J}},
    \end{align}
    where the constants $r_i'$ are independent of the discretization dimension $n$ and the ensemble size $J.$
\begin{proof}
The proof proceeds similarly to that of Theorem \ref{thm:postMeanEnsTheorem}. We decompose the error as
\begin{align}
    \Expect\|\Cm_{\text{post}}-P^*\widehat{C}_{\text{post}}P\|_{op}\leq \Expect\|\Cm_{\text{post}}-P^*C_{\text{post}}P\|_{op}+\Expect\|C_{\text{post}}-\widehat{C}_{\text{post}}\|_{op}.
\end{align}
The first term on the right-hand side is deterministic and is controlled by Theorem \ref{thm:posterior covariance Theorem}:
\begin{align*}
    \Expect\|\Cm_{\text{post}}-P^*C_{\text{post}}P\|_{op}=\|\Cm_{\text{post}}-P^*C_{\text{post}}P\|_{op}\lesssim r_1'\psi_1(n)+r_2'\psi_2(n).
\end{align*}
For the second term, we use \cite[Theorem 3.5]{ghattas2022non}, which guarantees that
\begin{align}
    \Expect\|\widehat{C}_{\text{post}}-C_{\text{post}}\|_{op}\lesssim c_n'\sqrt{\frac{r_M(C_0)}{J}}+c_n''\left(\sqrt{\frac{r_M(C_0)}{J}}\vee \sqrt{\frac{r_2(\Gamma)}{J}} \right),
\end{align}
where 
\begin{align*}c_n' &=\left( \|C_0\|_{op}\vee \|C_0\|_{op}^3\right)\left(\|F\|_{op}^2\vee\|F\|_{op}^4 \right)\left( \|\Gamma^{-1}\|_{op}\vee \|\Gamma^{-1}\|_{op}^2\right), \\
    c_n'' &=\left( \|F\|_{op}\vee \|F\|_{op}^3\right)\left(\Gamma^{-1}\|\vee\|\Gamma^{-1}\|^2 \right)\left( \|C_0\|_{op}\vee \|\Gamma\|_{op}\right)\left(\|C_0\|_{op}\vee \|C_0\|^2 \right).
\end{align*}
Again, the quantities $c_n'$ and $c_n''$ depend on $n$, however under Assumption \ref{assumption:errors} each of the discretized operators appearing in the constants can be made arbitrarily close to their continuum counterparts when $n$ is sufficiently large. In particular, we take $n$ to be large enough such that the following two bounds hold:
\begin{align*}
    c_n' &\leq c'=2\left( \|\Cm_0\|_{op}\vee \|\Cm_0\|_{op}^3\right)\left(\|\F\|_{op}^2\vee\|\F\|_{op}^4 \right)\left( \|\Gamma^{-1}\|_{op}\vee \|\Gamma^{-1}\|_{op}^2\right), \\
    c_n'' &\leq c''=2\left( \|\F\|_{op}\vee \|\F\|_{op}^3\right)\left(\Gamma^{-1}\|\vee\|\Gamma^{-1}\|^2 \right)\left( \|\Cm_0\|_{op}\vee \|\Gamma\|_{op}\right)\left(\|\Cm_0\|_{op}\vee \|\Cm_0\|^2 \right).
\end{align*}
With these bounds and our assumption that $r_M(C_0)\leq c r(\Cm_0)$, we take $r_3=\sqrt{c}c'\sqrt{r(\Cm_0)}+c''\left(\sqrt{c}\sqrt{r(\Cm_0)}\vee r_2(\Gamma) \right)$ and we are done.
\end{proof}
\end{theorem}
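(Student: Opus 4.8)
The plan is to follow the template of the proof of Theorem~\ref{thm:postMeanEnsTheorem}, splitting the error by the triangle inequality into a deterministic discretization part and a stochastic sampling part:
\begin{align*}
\widehat{\eps}_C \leq \|\Cm_{\text{post}} - P^*C_{\text{post}}P\|_{op} + \Expect\bigl\|P^*(C_{\text{post}} - \widehat{C}_{\text{post}})P\bigr\|_{op}.
\end{align*}
The first term is precisely $\eps_C$ from \eqref{eq:covErr}, so Theorem~\ref{thm:posterior covariance Theorem} (applicable since Assumption~\ref{assumption:errors} is assumed) bounds it by $r_1'\psi_1(n) + r_2'\psi_2(n)$. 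For the second term, I would first observe that $P^*$ is an isometry from $\R^n_M$ into $\H$ (because $\langle P^*\vec u,P^*\vec u\rangle_\H=\langle\vec u,PP^*\vec u\rangle_M=\|\vec u\|_M^2$ using $PP^*=I$) and that $\|P\|_{op}\le1$ (because $P^*P$ is an orthogonal projection), hence $\|P^*(C_{\text{post}}-\widehat C_{\text{post}})P\|_{op}=\|C_{\text{post}}-\widehat C_{\text{post}}\|_{op}$, reducing the problem to the finite-dimensional ensemble covariance error on the weighted space $\R^n_M$.

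The core step is then to invoke \cite[Theorem 3.5]{ghattas2022non}, which bounds $\Expect\|\widehat C_{\text{post}}-C_{\text{post}}\|_{op}$ by a sum of terms of the form $c_n'\sqrt{r_M(C_0)/J}$ and $c_n''(\sqrt{r_M(C_0)/J}\vee\sqrt{r_2(\Gamma)/J})$, where the prefactors $c_n',c_n''$ are polynomial in $\|C_0\|_{op}$, $\|F\|_{op}$, $\|\Gamma^{-1}\|_{op}$, $\|\Gamma\|_{op}$. The crucial observation is that the $n$-dependence of these prefactors sits entirely in $\|C_0\|_{op}$ and $\|F\|_{op}$; using the isometry property of $P^*$ together with $PP^*=I$ one checks $\|C_0\|_{op}=\|P^*C_0P\|_{op}$ and $\|F\|_{op}=\|FP\|_{op}$, so Assumption~\ref{assumption:errors}(i)--(ii) give $\|C_0\|_{op}\to\|\Cm_0\|_{op}$ and $\|F\|_{op}\to\|\F\|_{op}$ as $n\to\infty$. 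Consequently, for $n$ large enough $c_n'\le c'$ and $c_n''\le c''$ for fixed continuum constants (inserting a factor of $2$ to absorb the vanishing perturbation). Combining this with the standing hypotheses $r_M(C_0)\le c\,r(\Cm_0)$ for large $n$ and $J\ge r_M(C_0)$, all the sampling terms collapse into a single contribution $r_3'/\sqrt J$ with $r_3'=\sqrt c\,c'\sqrt{r(\Cm_0)}+c''\bigl(\sqrt c\sqrt{r(\Cm_0)}\vee r_2(\Gamma)\bigr)$, which is manifestly independent of $n$ and $J$.

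The main obstacle is bookkeeping rather than conceptual: one must verify carefully that \emph{every} operator norm feeding into the \cite{ghattas2022non} prefactors transfers from the discretized operator on $\R^n_M$ to its continuum counterpart with an error that vanishes under mesh refinement, so that one fixed bound serves for all sufficiently large $n$; this uses only submultiplicativity of the operator norm and the identities $PP^*=I$ and $(P^*P)^2=P^*P$. A secondary subtlety is that the effective dimension $r_M(C_0)$ in \eqref{eq:effectiveDim} is defined through the weighted trace and operator norm on $\R^n_M$, so it is the hypothesis $r_M(C_0)\le c\,r(\Cm_0)$ --- which holds for our finite element and graph examples --- that is responsible for the sampling term remaining dimension-free under mesh refinement. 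No ideas beyond those already used in the proof of Theorem~\ref{thm:postMeanEnsTheorem} are needed.
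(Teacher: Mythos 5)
Your proposal is correct and follows essentially the same route as the paper's proof: the same triangle-inequality split into the deterministic discretization error (handled by Theorem \ref{thm:posterior covariance Theorem}) and the sampling error (handled by Theorem 3.5 of \cite{ghattas2022non}), with the same passage to $n$-independent constants via Assumption \ref{assumption:errors} and the hypothesis $r_M(C_0)\le c\,r(\Cm_0)$. Your explicit justification that $\|P^*(C_{\text{post}}-\widehat C_{\text{post}})P\|_{op}=\|C_{\text{post}}-\widehat C_{\text{post}}\|_{op}$ via the isometry of $P^*$ and $PP^*=I$ is a welcome detail the paper leaves implicit, but it does not change the argument.
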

The results in \cite{ghattas2022non} also prove high probability bounds for the error in the posterior means and covariances, which are stronger than the expectation bounds used here.  High probability bounds  can be  just as easily obtained within our computational framework, but we elect to present the expectation bounds for simplicity of exposition.

\subsection{Effective Dimension of Prior 
Covariance}\label{sec:EffectiveDimensionBound}
The results in the previous subsection relied on the assumption that the effective dimension of the discretized covariance operator in the weighted inner product space is bounded above independently of the discretization level for sufficiently large $n.$ We will now show that this assumption does indeed hold for the finite element covariance discretization given in Subsection \ref{sec:FEM discretization}. To get an upper bound for \ref{eq:effectiveDim}, we need an upper bound for $\text{Tr}(C_0)$ and a lower bound for $\|C_0\|_M$. The following result uses classical eigenvalue approximation results and our operator-norm convergence result to derive such a bound.
\begin{theorem}[Effective Dimension Upper Bound]\label{thm:effDimUB}
    Let the assumptions of Theorem \ref{th:opNormPriorCov} hold. Then, for $h$ sufficiently small, there exists a constant $\tau$ independent of $h$ such that
    \begin{align}\label{eq:efdimbound}
        r_M(C_0)\leq \frac{\emph{Tr}(\Cm_0)}{\|\Cm_0\|_{op}-\tau h^2}.
    \end{align}
    Consequently, for any constant $c>1$ taking $h\leq \sqrt{\tau(1-\frac{1}{c})\|\Cm_0\|_{op}}$ guarantees that
    \begin{align}
        r_M(C_0)\leq cr(\Cm_0).
    \end{align}
    \begin{proof}
        By Theorem \ref{th:opNormPriorCov} we have that
        \begin{align*}
            \|\Cm_0-P^*C_0P\|_{op}\leq \tau h^2,
        \end{align*}
        for some constant $\tau$ independent of $h$. Applying the reverse triangle inequality to this inequality gives us that
        \begin{align*}
            \|P^*C_0P\|_{op}\geq \|\Cm_0\|_{op}- \tau h^2.
        \end{align*}
        Then, using the fact that $\R_M^n$ matrix operator norm coincides with the $L^2(\D)$ operator norm of any mapping in the weighted inner product space, we get that
        \begin{align}\label{eq:opNorm lower bound}
            \|C_0\|_{op}\geq\|\Cm_0\|_{op}- \tau h^2.
        \end{align}
        To upper bound the trace of $C_0$ we proceed to bound the eigenvalues of $C_0.$ Recall that $C_0=A^{-\alpha}.$ We let $\{\lambda^{(i)}_h\}_{i=1}^n$ denote the eigenvalues of $A$, and $\{\lambda^{(i)}\}_{i=1}^\infty$ denote the eigenvalues of the continuum differential operator $\mathcal{A}$, both in ascending order. To characterize these eigenvalues, we can use classical finite element eigenvalue estimates. In particular, we refer to \cite[Theorem 6.1]{strang1973analysis}, which proves that the eigenvalues of the finite element approximation overestimate the eigenvalues of the continuum differential operator. That is, for each $i=1,\ldots,n$ we have that $\lambda^{(i)}\leq \lambda_h^{(i)}$. It then follows that
        \begin{align}\label{eq:trace uppper bound}
            \text{Tr}(C_0)=\sum_{i=1}^n(\lambda_h^{(i)})^{-\alpha}\leq \sum_{i=1}^n(\lambda^{(i)})^{-\alpha}\leq \sum_{i=1}^\infty (\lambda^{(i)})^{-\alpha}= \text{Tr}(\Cm_0).
        \end{align}
        Combining \eqref{eq:opNorm lower bound} and \eqref{eq:trace uppper bound} gives \eqref{eq:efdimbound}, completing the proof. 
    \end{proof}
\end{theorem}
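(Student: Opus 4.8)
The plan is to estimate $r_M(C_0)=\text{Tr}(C_0)/\|C_0\|_{op}$ by bounding the numerator from above by $\text{Tr}(\Cm_0)$ and the denominator from below by $\|\Cm_0\|_{op}-\tau h^2$; the operator-norm convergence of Theorem \ref{th:opNormPriorCov} handles the denominator, while classical finite element eigenvalue estimates handle the numerator.

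\textbf{Lower bound on $\|C_0\|_{op}$.} Theorem \ref{th:opNormPriorCov} gives $\|\Cm_0-P^*C_0P\|_{op}\le\tau h^2$ for some $\tau$ independent of $h$ (with $C_0=A^{-\alpha}$), the operator norm on the left being taken on $\H=L^2(\D)$. The reverse triangle inequality yields $\|P^*C_0P\|_{op}\ge\|\Cm_0\|_{op}-\tau h^2$. To conclude, I would identify $\|P^*C_0P\|_{op}$ (on $\H$) with $\|C_0\|_{op}$ (on the weighted space $\R^n_M$): since $P^*$ is an isometry of $\R^n_M$ onto $\V_h\subset\H$, with inverse $P$ on $\V_h$, and $\|Pw\|_M\le\|w\|_\H$ for all $w\in\H$ with equality on $\V_h$, one obtains $\|P^*C_0P\|_{op,\H}=\|C_0\|_{op,M}$; hence $\|C_0\|_{op}\ge\|\Cm_0\|_{op}-\tau h^2$, which is positive for $h$ sufficiently small.

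\textbf{Upper bound on $\text{Tr}(C_0)$.} Writing $\{\lambda_h^{(i)}\}_{i=1}^n$ for the eigenvalues of the generalized eigenproblem $K\vec u=\lambda M\vec u$ (the conforming Galerkin approximation of the eigenvalue problem for $\mathcal{A}$) and $\{\lambda^{(i)}\}_{i\ge1}$ for those of $\mathcal{A}$, the Courant--Fischer min--max principle, applied with the Rayleigh quotient $a(u,u)/\|u\|_{L^2(\D)}^2$ restricted to subspaces of $\V_h$, gives $\lambda^{(i)}\le\lambda_h^{(i)}$ for $i=1,\dots,n$ --- this is the classical eigenvalue overestimation property of conforming finite elements, cf.\ \cite{strang1973analysis}. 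Since $C_0=A^{-\alpha}$ and $\alpha>0$, it follows that $\text{Tr}(C_0)=\sum_{i=1}^n(\lambda_h^{(i)})^{-\alpha}\le\sum_{i=1}^n(\lambda^{(i)})^{-\alpha}\le\sum_{i=1}^\infty(\lambda^{(i)})^{-\alpha}=\text{Tr}(\Cm_0)<\infty$, the finiteness being the trace-class hypothesis on $\Cm_0$.

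Combining the two bounds gives \eqref{eq:efdimbound}. The second claim is then purely algebraic: $\text{Tr}(\Cm_0)/(\|\Cm_0\|_{op}-\tau h^2)\le c\,r(\Cm_0)=c\,\text{Tr}(\Cm_0)/\|\Cm_0\|_{op}$ is equivalent to $\|\Cm_0\|_{op}\le c(\|\Cm_0\|_{op}-\tau h^2)$, i.e.\ to $h^2\le(1-1/c)\|\Cm_0\|_{op}/\tau$, which holds for $h$ small. The one step requiring care is the identification of the two operator norms in the denominator bound (the convergence result of Theorem \ref{th:opNormPriorCov} is phrased on $\H$, whereas the effective dimension is defined on $\R^n_M$); the eigenvalue comparison, though standard, is the essential input that makes the trace estimate --- and hence the whole bound --- independent of the discretization dimension $n$.
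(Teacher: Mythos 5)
Your proposal is correct and follows essentially the same route as the paper: the reverse triangle inequality applied to the operator-norm bound of Theorem \ref{th:opNormPriorCov} for the denominator, and the classical conforming-Galerkin eigenvalue overestimation property $\lambda^{(i)}\le\lambda_h^{(i)}$ for the trace bound in the numerator, followed by the same algebraic manipulation. The only difference is that you spell out the identification $\|P^*C_0P\|_{op,\H}=\|C_0\|_{op,M}$ via the isometry $P^*$, which the paper asserts without elaboration, and your derivation of the smallness condition on $h$ (namely $h^2\le(1-1/c)\|\Cm_0\|_{op}/\tau$) is the correct one --- the threshold printed in the theorem statement appears to have $\tau$ misplaced.
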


\begin{remark}
    In the graph-based approximation setting, one must work slightly harder to derive such an upper bound on the effective dimension as the eigenvalues of the graph Laplacian are not guaranteed to overestimate the eigenvalues of the Laplace-Beltrami operator. Further, the eigenvalue estimates in Theorem \ref{thm:graphEW} only hold for eigenvalues that satisfy $h_n\sqrt{\lambda^{(k)}}\ll 1$, which typically only holds up to some $k<n$. As such, it may be most natural to consider a truncated prior covariance that retains only the portion of the spectrum that provably approximates the continuum operator, as is done in \cite{trillos2017consistency}.
\end{remark}

\section{Convergence of MAP Estimators Under Mesh Refinement}\label{sec:MAPconvergence}
For many inverse problems of interest, the forward model is not linear. In this section, we will see how our computational framework and analysis can be leveraged to guarantee the convergence of the discretized maximum a posteriori (MAP) estimators to their continuum counterparts in nonlinear Bayesian inverse problems as the mesh is refined. In particular, our framework very naturally fits into the theory developed in \cite{ayanbayev2021convergence}, from which we will show $\Gamma$-convergence of the relevant Onsager-Machlup functions and consequently convergence of the MAP estimators (up to subsequences). Under reasonable conditions on the nonlinear forward model (see Assumption 2.7 in \cite{stuart2010inverse}), the posterior can still be characterized as a change of measure with respect to the prior as in \eqref{eq:changemeasureinfinite}:
\begin{align}\label{eq:nonlinear posterior measure}
        \frac{d\mu_{\text{post}}}{d\mu_0}(u)=\frac{1}{Z}\exp \left(-\frac{1}{2}\|y-\F(u)\|_{\Gamma^{-1}}^2 \right),
\end{align}
where $Z=\int_{\H}\exp \left(-\Phi(u) \right)d\mu_0(u).$ For a general $\F$, this posterior measure will no longer be Gaussian. As such, it is difficult to fully characterize the posterior measure. One particularly useful point summary of the posterior measure is a maximum a posteriori (MAP) estimator. In infinite dimensions, the notion of a MAP estimator was introduced in \cite{dashti2013map} as the maximizer of a small ball probability. The theory of MAP estimation in function spaces has been further refined in \cite{kretschmann2019nonparametric,lambley2022order,klebanov2023maximum,ayanbayev2021convergence,lambley2023strong}. For $u\in \H,$ we let $B^{\delta}(u)\subset \H$ be the open ball centered at $u$ with radius $\delta.$ A \textit{MAP estimator} (or \textit{strong mode}) for $\mu_{\text{post}}$ is any point $u^{\text{MAP}}\in \H$ satisfying
\begin{align}\label{eq:MAP Estimator}
    \lim_{\delta\to 0}\frac{\mu_{\text{post}}\left(B^{
    \delta}(u^{\text{MAP}}) \right)}{M_\delta}=1, \quad M_\delta=\sup_{u\in \H}\mu_{\text{post}}\bigl(B^{
    \delta}(u) \bigr).
\end{align}
It is shown in \cite{lambley2023strong} that the MAP estimators of \eqref{eq:nonlinear posterior measure} with a Gaussian prior $\mu_0=\Nc(m_0,\Cm_0)$ in a separable Hilbert space are precisely characterized by the minimizers of the Onsager-Machlup (OM) functional $I_{\text{post}}:\H\to \R$ given by
\begin{align}\label{eq:OM functional}
    I_{\text{post}}(u)=\begin{cases}
        \Phi(u)+I_0(u), \quad & \text{if } u-m_0\in \text{Im}(\Cm_0^{1/2}),\\
        +\infty, & \text{otherwise},
    \end{cases}
\end{align}
where
\begin{align*}
    \Phi(u)=\frac{1}{2}\|y-\F(u)\|_{\Gamma^{-1}}^2 \quad  \text{ and } \quad  I_0(u)=\frac{1}{2}\|\Cm_0^{-1/2}(u-m_0)\|_{\H}^2.
\end{align*}
In more general settings, the minimizers of \eqref{eq:OM functional} may not coincide with MAP estimators defined in \eqref{eq:MAP Estimator}, and a weaker notion of MAP estimator is required \cite{helin2015maximum}. The optimization problem of minimizing \eqref{eq:OM functional} is in general difficult, if not impossible, to solve analytically. The computational framework put forth in Subsection \ref{ssec:computationalframework} provides a tractable finite-dimensional optimization problem that provably approximates the infinite-dimensional problem. The discretized posterior measure in \eqref{eq:changemeasurefinite} gives the following OM functional:
\begin{align}\label{eq:projected OM functional}
    I_{\text{post}}^{(n)}(u)=\begin{cases}
        \Phi^{(n)}(u)+I_0^{(n)}(u),\quad  & \text{if } u-m\in \text{Im}((P^*C_0P)^{1/2}),\\
        +\infty, & \text{otherwise},
    \end{cases}
\end{align}
where
\begin{align*}
    \Phi^{(n)}(u)=\frac{1}{2}\|y-F(Pu)\|_{\Gamma^{-1}}^2, \quad  \text{ and } \quad  I_0^{(n)}=\frac{1}{2}\|(P^*C_0P)^{\dagger/2}(u-m_0)\|_{\H}^2.
\end{align*}
Here $F:\R^n_M\to \R^k$ is a discretized approximation to $\F$ and the operator $(P^*C_0P)^{\dagger/2}=\sum_{i=1}^n(\lambda_n^{(i)})^{-\alpha/2}P^*\psi_n^{(i)}\otimes P^*\psi_n^{(i)}$ is the Moore-Penrose pseudoinverse of $(P^*C_0P)^{1/2}.$ We remark that $u-m\in \text{Im}((P^*C_0P)^{1/2})$ if and only if $u,m\in\V$, so we can equivalently write \eqref{eq:projected OM functional} as $I_{\text{post}}^{(n)}:\R_M^n\to \R,$ with
\begin{align}\label{eq:discretized OM functional}
    I_{\text{post}}^{(n)}(\vec{u})=\frac{1}{2}\|y-F(\vec{u})\|_{\Gamma^{-1}}^2+\frac{1}{2}\|C_0^{-1/2}(\vec{u}-\vec{m}_0)\|_{M}^2,
\end{align}
which can be minimized by methods from the breadth of literature on finite-dimensional optimization problems. To apply the results in \cite{ayanbayev2021convergence}, we first review some preliminary definitions and results regarding $\Gamma$-convergence.
\begin{definition}
Let $I,I_n:\H\to \overline{\R}$. We say that $I_n$ $\Gamma$-converges to $I$, or $\Gamma$-$\lim_{n\to\infty}I_n=I$, if, for every $u\in \H$, the following two conditions hold:
\begin{enumerate}[leftmargin=.3in]
    \item[(a)] \emph{($\Gamma$-lim inf Inequality.)} For every sequence $\{u_n\}_{n=1}^\infty$ converging to $u$ in $\H$,
    \begin{align}\label{eq:lim inf inequality}
        I(u)\leq \liminf_{n\to \infty}I_n(u_n);
    \end{align}
    \item[(b)] \emph{($\Gamma$-lim sup Inequality.)} There exists a sequence $\{u_n\}_{n=1}^\infty$ converging to $u$ such that 
    \begin{align}\label{eq:lim sup inequality}
        I(u)\geq \limsup_{n\to \infty} I_n(u_n).
    \end{align}
\end{enumerate}
\end{definition}
\begin{definition}
    A sequence of functionals $\{I_n\}_{n=1}^\infty$ is equicoercive if, for all $t\in \R$, there exists a compact subset $K_t\subseteq \H$ such that, for all $n$, $I_n^{-1}([-\infty,t])\subseteq K_t.$
\end{definition}
\begin{definition}
    Let $I,I_n:\H\to \overline{\R}$. We say that $I_n$ converges continuously to $I$ if, for every $u\in \H$ and every neighborhood $V$ of $I(u)$ in $\overline{\R}$, there exist $N\in \N$ and a neighborhood $U$ of $u$ such that $n\geq N$ and $u'\in U$ imply that $I_n(u')\in V.$
\end{definition}
Continuous convergence is a stronger notion of convergence than pointwise convergence and $\Gamma$-convergence, but is implied by uniform convergence of $I_n$ to $I$ in the case that $I$ is continuous \cite{dal2012introduction}. The following classical result can be found for instance in \cite{braides2006handbook}.
\begin{proposition}\label{thm:gamma convergence theorem}
    Let $I_n,I:\H \to \overline{\R}$ with $\Gamma$-$\lim_{n\to \infty}I_n=I$, and $\{I_n\}_{n=1}^\infty$ equicoercive. Then,
    \begin{align}
        \min_{\H} I=\lim_{n\to \infty}\inf_{\H}I_n,
    \end{align}
    and if $\{u_n\}_{n=1}^\infty$ is a precompact sequence such that $\lim_{n\to \infty} I_n(u_n)=\min_{\H}I,$ then every limit of a convergent subsequence of $\{u_n\}_{n=1}^\infty$ is a minimizer of $I.$ In particular, if each $I_n$ has a minimizer $u_n$, then any convergent subsequence of these minimizers is a minimizer of $I.$
\end{proposition}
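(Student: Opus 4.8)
The plan is to prove the variational convergence $\min_\H I=\lim_{n\to\infty}\inf_\H I_n$ by bounding $\liminf_n \inf_\H I_n$ from below using the $\Gamma$-liminf inequality \eqref{eq:lim inf inequality} together with equicoercivity, and bounding $\limsup_n \inf_\H I_n$ from above using the $\Gamma$-limsup inequality \eqref{eq:lim sup inequality}; the statement about minimizers then follows from \eqref{eq:lim inf inequality} alone. This is the classical ``fundamental theorem of $\Gamma$-convergence'', so the emphasis is on sequencing the compactness arguments correctly rather than on any single estimate.

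\emph{Lower bound and existence of a minimizer of $I$.} Set $\ell:=\liminf_n\inf_\H I_n$. If $\ell=+\infty$ then $I\equiv+\infty$ (a recovery sequence at any point of finite $I$-value would force $\inf_\H I_n$ to be eventually finite and bounded), and the claim is trivial, so assume $\ell<\infty$. Pick a subsequence along which $\inf_\H I_{n_k}\to\ell$ and choose $u_{n_k}$ with $I_{n_k}(u_{n_k})\le\inf_\H I_{n_k}+1/k$. For $k$ large, $I_{n_k}(u_{n_k})\le t$ for some fixed $t>\ell$, so equicoercivity places $u_{n_k}$ in the compact set $K_t$; extract a further subsequence $u_{n_{k_j}}\to u^\ast$. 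By \eqref{eq:lim inf inequality}, $I(u^\ast)\le\liminf_j I_{n_{k_j}}(u_{n_{k_j}})=\ell$; in particular $\inf_\H I\le\ell<\infty$.

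\emph{Upper bound and conclusion of the variational statement.} Apply \eqref{eq:lim sup inequality} at $u^\ast$: there is a recovery sequence $u_n\to u^\ast$ with $\limsup_n I_n(u_n)\le I(u^\ast)$. Since $\inf_\H I_n\le I_n(u_n)$, this gives $\limsup_n\inf_\H I_n\le I(u^\ast)$. Chaining with the previous paragraph, $I(u^\ast)\le\ell\le\limsup_n\inf_\H I_n\le I(u^\ast)$, so all quantities coincide: $I(u^\ast)=\inf_\H I$, hence $\min_\H I$ is attained at $u^\ast$, and $\lim_n\inf_\H I_n=\min_\H I$.

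\emph{Convergence of minimizers, and the main obstacle.} If $\{u_n\}$ is precompact with $I_n(u_n)\to\min_\H I$ and $u_{n_k}\to u$ along a convergent subsequence, then \eqref{eq:lim inf inequality} gives $I(u)\le\liminf_k I_{n_k}(u_{n_k})=\min_\H I$, so $u$ minimizes $I$. For the ``in particular'' clause, if each $I_n$ has a minimizer $u_n$ then $I_n(u_n)=\inf_\H I_n\to\min_\H I$ by the first two paragraphs, so $I_n(u_n)\le t$ eventually for some fixed $t$, whence $u_n\in K_t$ for all large $n$ and $\{u_n\}$ is precompact; the preceding sentence then applies. The only subtlety — and thus the ``hard part'' in the sense of bookkeeping — is that equicoercivity must be invoked twice (once to extract the candidate minimizer $u^\ast$, once to make the $I_n$-minimizers precompact) and that the existence of a minimizer of $I$ must be \emph{deduced} from $\Gamma$-convergence plus equicoercivity rather than assumed at the outset, so the steps have to be carried out in the order above.
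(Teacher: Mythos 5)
First, a remark on provenance: the paper does not actually prove Proposition \ref{thm:gamma convergence theorem} --- it quotes it as the classical ``fundamental theorem of $\Gamma$-convergence'' with a pointer to \cite{braides2006handbook} --- so your argument has to stand on its own. Its architecture is the standard and correct one: the $\Gamma$-liminf inequality plus equicoercivity to extract a candidate minimizer $u^\ast$, the $\Gamma$-limsup inequality to close the chain of inequalities, and the liminf inequality again for the convergence of (near-)minimizers. Your third paragraph, including the observation that equicoercivity must be invoked a second time to make the $I_n$-minimizers precompact, is correct as written.

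There is, however, one genuine gap in your second paragraph. The chain $I(u^\ast)\le\ell\le\limsup_n\inf_\H I_n\le I(u^\ast)$ proves that $I(u^\ast)=\ell=\lim_n\inf_\H I_n$, but the quantity $\inf_\H I$ never appears in that chain: the only relation you have established between $\inf_\H I$ and the chained quantities is the trivial one $\inf_\H I\le I(u^\ast)$. The conclusion ``$I(u^\ast)=\inf_\H I$, hence $\min_\H I$ is attained at $u^\ast$'' therefore does not follow from what is written --- nothing so far excludes a point $v$ with $I(v)<I(u^\ast)$. To close the gap you must apply the $\Gamma$-limsup inequality \eqref{eq:lim sup inequality} at an \emph{arbitrary} $v\in\H$, not only at $u^\ast$: a recovery sequence $v_n\to v$ gives $\ell=\liminf_n\inf_\H I_n\le\limsup_n I_n(v_n)\le I(v)$, so $\ell\le\inf_\H I$; combined with $\inf_\H I\le I(u^\ast)\le\ell$ this yields $I(u^\ast)=\inf_\H I=\min_\H I$ and completes the variational statement. (A much smaller quibble: if $\inf_\H I_{n_k}=-\infty$ the prescription ``$I_{n_k}(u_{n_k})\le\inf_\H I_{n_k}+1/k$'' is vacuous; one should instead take $I_{n_k}(u_{n_k})\le\max\{\inf_\H I_{n_k},-k\}+1/k$, after which the argument runs verbatim. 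In the paper's application the functionals $I^{(n)}_{\text{post}}$ are nonnegative, so this never arises.)
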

Essentially, this proposition states that $\Gamma$-convergence is the correct notion of convergence of functionals to guarantee that their minimizers also converge. We now use our results from Subsection \ref{ssec:errorFEM} and the results in \cite{ayanbayev2021convergence} to prove convergence of the finite element discretized MAP estimators as the mesh is refined:

\begin{theorem}[Convergence of MAP Estimators]\label{thm:Gamma Convergence}
     Let the covariance operator $\Cm_0:\H\to \H$ 
 and its discrete approximation $C_0:\R_M^n\to\R_M^n$ satisfy Assumption \ref{assumption:errors} (i), and let the orthogonal projection operator $P$ satisfy Assumption \ref{assumption:errors} (iii). Assume additionally that the approximate forward model is such that $\Phi^{(n)}$ converges to $\Phi$ continuously as $n\to \infty$. Then, the corresponding sequence $\{I_{\emph{post}}^{(n)}\}_{n=1}^\infty$ of OM functionals given in \eqref{eq:discretized OM functional}   satisfies that  
\begin{align}
    \Gamma \text{-}\lim_{n\to \infty} I_{\emph{post}}^{(n)}=I_{\emph{post}},
\end{align}
and the cluster points as $n\to \infty$ of the MAP estimators of the posteriors $\mu_{\emph{post}}^n$ are MAP estimators of the continuum posterior $\mu_{\emph{post}}.$
\begin{proof}
    Assumptions \ref{assumption:errors} (i) and (iii) imply that, as $n \to \infty,$ $P^*C_0P$ converges to $\Cm_0$ in operator norm and $P^*Pm_0$ converges to $m_0$ in $\H$. 
    Therefore, we can apply Theorem 5.5 in \cite{ayanbayev2021convergence} to conclude that $I_0=\Gamma \text{-}\lim_{n\to \infty}I_0^{(n)}$ and that the sequence $\{I_0^{(n)}\}_{n=1}^\infty$ is equicoercive. Then, under the assumption that $\Phi^{(n)}\to \Phi$ continuously as $n\to \infty$, applying Theorem 6.1 in \cite{ayanbayev2021convergence} gives us that $\Gamma \text{-} \lim_{n\to \infty} I_{\text{post}}^{(n)}=I_{\text{post}},$ and the sequence $\{I^{(n)}_{\text{post}}\}_{n=1}^\infty$ is also equicoercive since $\Phi^{(n)}\geq 0$ in our setting. Thus, by Proposition \ref{thm:gamma convergence theorem} we conclude that 
     if $u_n$ is a minimizer of $I_{\text{post}}^{(n)},$ $ n\ge 1,$ then any convergent subsequence of $\{u_n\}_{n=1}^\infty$ converges to a minimizer of $I_{\text{post}}.$
  % any convergent subsequence of minimizers of $I_{\text{post}}^{(n)}$ converges to a minimizer of $I_{\text{post}}$.
    Since the minimizers of the OM functionals $I_{\text{post}}^{(n)}$ and  $I_{\text{post}}$ coincide with the MAP estimators of $\mu_{\text{post}}^{(n)}$ and $\mu_{\text{post}}$ respectively, we have hence shown     %this is precisely 
    that any convergent subsequence of MAP estimators of the discretized posterior converges 
    %converge
    to a MAP estimator of the continuum posterior.
\end{proof}
\end{theorem}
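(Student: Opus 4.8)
The plan is to reduce the claim to the abstract $\Gamma$-convergence theory for Onsager--Machlup functionals of perturbed Gaussian measures developed in \cite{ayanbayev2021convergence}; the genuine work is then to check that its hypotheses follow from Assumption \ref{assumption:errors}, after which the conclusion about minimizers is supplied by Proposition \ref{thm:gamma convergence theorem} and the translation to MAP estimators by the characterization in \cite{lambley2023strong}.

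First I would extract from Assumption \ref{assumption:errors} the two limits that drive the argument. Part (i) gives $\|\Cm_0 - P^*C_0P\|_{op}\le r_1\psi_1(n)\to 0$, so the (extended) discretized prior covariances converge to $\Cm_0$ in operator norm. Part (iii), combined with the choice $\vec m_0 = Pm_0$ (hence $P^*\vec m_0 = P^*Pm_0$) and the fact that $m_0$ lies in the space $\H'$ appearing there, gives $\|m_0 - P^*Pm_0\|_{\H}\le r_3\psi_3(n)\|m_0\|_{\H'}\to 0$, i.e. the discretized prior means converge to $m_0$ in $\H$. By the remark preceding the theorem, $I_{\text{post}}^{(n)}$ may be viewed as a functional on all of $\H$ that is finite exactly on $\V$, so these are statements about a sequence of Gaussian measures on $\H$ whose Cameron--Martin spaces are nested but mutually singular with $\text{Im}(\Cm_0^{1/2})$; this is precisely the regime \cite{ayanbayev2021convergence} is built to treat.

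Next I would apply Theorem 5.5 of \cite{ayanbayev2021convergence} to the quadratic parts: operator-norm convergence of the covariances together with $\H$-convergence of the means yields $\Gamma\text{-}\lim_{n\to\infty} I_0^{(n)} = I_0$ and equicoercivity of $\{I_0^{(n)}\}$. Then, using the standing hypothesis that $\Phi^{(n)}$ converges continuously to $\Phi$, I would invoke Theorem 6.1 of \cite{ayanbayev2021convergence} to pass to the full functionals, obtaining $\Gamma\text{-}\lim_{n\to\infty} I_{\text{post}}^{(n)} = I_{\text{post}}$; equicoercivity survives the addition of $\Phi^{(n)}$ because $\Phi^{(n)}\ge 0$ in our setting, so sublevel sets of $I_{\text{post}}^{(n)}$ are contained in sublevel sets of $I_0^{(n)}$.

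Finally, with $\Gamma$-convergence and equicoercivity established, Proposition \ref{thm:gamma convergence theorem} shows that any convergent subsequence of minimizers $u_n$ of $I_{\text{post}}^{(n)}$ converges to a minimizer of $I_{\text{post}}$ (and $\min_{\H} I_{\text{post}} = \lim_n \inf_{\H} I_{\text{post}}^{(n)}$). Since by \cite{lambley2023strong} the minimizers of $I_{\text{post}}^{(n)}$ and $I_{\text{post}}$ coincide with the MAP estimators of $\mu_{\text{post}}^n$ and $\mu_{\text{post}}$ respectively, this yields the stated conclusion about cluster points. The step I expect to require the most care is aligning our setup with the precise hypotheses of Theorems 5.5 and 6.1 in \cite{ayanbayev2021convergence} — in particular confirming that operator-norm (rather than trace-class) convergence of the covariances suffices, and that the pseudoinverse-based functional $I_0^{(n)}$ in \eqref{eq:discretized OM functional} matches their definition of the OM functional for the discretized prior; once that dictionary is fixed, the remainder is a chain of citations.
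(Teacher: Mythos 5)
Your proposal is correct and follows essentially the same route as the paper's proof: derive operator-norm convergence of $P^*C_0P$ and $\H$-convergence of $P^*Pm_0$ from Assumption \ref{assumption:errors} (i) and (iii), invoke Theorem 5.5 of \cite{ayanbayev2021convergence} for the Gaussian parts, Theorem 6.1 together with continuous convergence of $\Phi^{(n)}$ and nonnegativity for the full functionals, and conclude via Proposition \ref{thm:gamma convergence theorem} and the MAP characterization of \cite{lambley2023strong}. Your closing caveat about matching the pseudoinverse-based $I_0^{(n)}$ to the OM functional conventions of \cite{ayanbayev2021convergence} is a reasonable point of care that the paper handles implicitly via the remark that $I_{\text{post}}^{(n)}$ is finite exactly on $\V$.
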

 We have demonstrated how  Assumptions \ref{assumption:errors} (i) and (iii) can be verified for finite element and graph-based discretizations. For nonlinear forward maps, verifying the continuous convergence of the discretized potential may not be straightforward. Lemma B.9 in  \cite{ayanbayev2021convergence} shows that if the approximate forward map is of the form $\F(P^*P\cdot),$ then continuous corresponding potentials converge continuously. The following example illustrates an important problem where the approximate forward model is not of this form; nonetheless, we will show in Theorem \ref{thm:Navier Stokes Gamma Convergence} that continuous convergence ---as well as Assumptions \ref{assumption:errors} (i) and (iii)--- can still be verified. 

\begin{example}[Eulerian Data Assimilation for the Navier-Stokes Equations] \label{ex:Eulerian Data Assimilation} 
    Eulerian data assimilation is concerned with learning the initial condition of a dynamical system from pointwise observations of the state at fixed spatial locations. This example briefly overviews Eulerian data assimilation for the Navier-Stokes equations, an important model problem in numerical weather forecasting. We refer to \cite{cotter2009bayesian,cotter2010approximation} for a more detailed discussion.

    %%%%%%%%%

As described in \cite{robinson2001infinite,temam2012infinite}, the Navier-Stokes equations on the two-dimensional torus $\mathcal{D} = \mathbb{T}^2 = [0,1]^2$ can be written as an infinite-dimensional dynamical system
    \begin{align}\label{eq:navier stokes ODE}
        \frac{dv}{dt}=\nu \mathsf{A} v+ \mathsf{B}(v,v)= f, \quad v(0)=u,    \end{align}
on the Hilbert space
    \begin{align}
        \H=\left\{v\in L^2_{\text{per}}\left( \D\right) : \int_\mathcal{D} v \, dx=0, \nabla 
 \cdot v=0\right\}
    \end{align}
    equipped with the standard $L^2(\D)$ inner product.  The Stokes operator $\mathsf{A}$ in \eqref{eq:navier stokes ODE} is self-adjoint, positive, densely defined on $\H$, and has a complete set of eigenpairs, $\{(\lambda^{(i)},\psi^{(i)})\}_{i=1}^\infty,$ with increasingly sorted eigenvalues. The energy-conserving quadratic nonlinearity $\mathsf{B}(v,v)$ arises from projection under the Leray operator \cite{temam1995navier}. For $u\in \H$ and $f$ sufficiently regular \cite{robinson2001infinite,cotter2010approximation}, there exists a unique solution to \eqref{eq:navier stokes ODE} such that $v\in L^{\infty}\bigl(0,T;\H^{1+s}\bigr)\subset L^{\infty}\bigl(0,T;L^\infty(\D)\bigr).$
        
        We wish to determine the initial condition $u$ from noisy observations of the velocity field $v$ at time $t>0$ and fixed spatial locations $x_1,\ldots x_{K}\in \D,$ given by
\begin{align}
    y_k=v(x_k,t)+\eta_k, \quad k=1,\ldots, K,
\end{align}
where $\eta\sim \Nc(0,\Gamma).$ 
This Eulerian data assimilation task can be formulated as an inverse problem with nonlinear forward map
\begin{align}\label{eq:navier stokes forward map}
    \F(u) =\left(v(x_1,t)^T,\ldots,v(x_{K},t)^T \right)^T\in \R^{d_y},
\end{align}
where $d_y=2K.$ 
We consider a Gaussian prior measure $\mu_0\sim \Nc(m_0,\Cm_0)$, where $\Cm_0=\mathsf{A}^{-\alpha}$ with $\alpha>1$ and $m_0\in \H^{\alpha}(\D)$. Here, powers of $\mathsf{A}$ are defined spectrally. The posterior measure can then be characterized as in \eqref{eq:nonlinear posterior measure}. 

We discretize the inverse problem on the space spanned by the first $n$ eigenfunctions $\{\psi^{(i)}\}_{i=1}^n$ of the Stokes operator.
Note that since the eigenfunctions are orthonormal, the weighted inner product described in Subsection \ref{ssec:computationalframework} coincides with the usual Euclidean inner product. The discretized prior covariance operator $C_0:\R^n\to \R^n$ is then given by $C_0=\emph{diag}\left( (\lambda^{(1)})^{-\alpha},\ldots, (\lambda^{(n)})^{-\alpha}\right)$. The discretized forward map is given by a Galerkin approximation to the PDE solution, defined via the projection $P: \mathcal{H} \to \R^n$ given in \eqref{eq:discretizationmap}. We denote by $v^N$ the solution to the (finite-dimensional) ordinary differential equation
\begin{align}\label{eq:navier stokes galerkin}
    \frac{dv^N}{dt}+\nu \mathsf{A} v^N+P \mathsf{B}(v^N,v^N)=P f, \quad v^N(0)=Pu. 
\end{align}
We then define
\begin{align}
F(u)=\left(v^N(x_1,t),\ldots,v^N(x_{d_y},t) \right)^T.
\end{align}
Given these approximations, we have a discretized OM functional exactly of the form \eqref{eq:discretized OM functional}. 
\end{example}

For the Eulerian data assimilation problem summarized in Example \ref{ex:Eulerian Data Assimilation} and further detailed in \cite{cotter2010approximation},
we can verify the assumptions of Theorem \ref{thm:Gamma Convergence} to obtain the following result. The proof can be found in Appendix \ref{appendix:proofexample}. 
\begin{theorem}[MAP Estimation for Eulerian Data Assimilation]\label{thm:Navier Stokes Gamma Convergence}
    Let $f\in L^2(0,T;\H^s)$ with $s>0$ and consider the Eulerian data assimilation problem summarized in Example \ref{ex:Eulerian Data Assimilation}.
    Then, as $n\to \infty,$ the sequence $\{I_{\emph{post}}^{(n)}\}_{n=1}^\infty$ of discretized OM functionals satisfies that 
    \begin{align}
    \Gamma \text{-}\lim_{n\to \infty} I_{\emph{post}}^{(n)}=I_{\emph{post}},
\end{align}
and the cluster points of the MAP estimators of the posteriors $\mu_{\emph{post}}^n$ are MAP estimators of the continuum posterior $\mu_{\emph{post}}.$
\end{theorem}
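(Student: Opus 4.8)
The plan is to invoke Theorem~\ref{thm:Gamma Convergence}, so the work reduces to verifying its three hypotheses for the spectral discretization of Example~\ref{ex:Eulerian Data Assimilation}: namely, operator-norm convergence $\|\Cm_0-P^*C_0P\|_{op}\to 0$ (Assumption~\ref{assumption:errors}~(i)), the orthogonal-projection estimate of Assumption~\ref{assumption:errors}~(iii), and continuous convergence $\Phi^{(n)}\to\Phi$. The first two are immediate from the spectral structure. Since $\Cm_0=\mathsf{A}^{-\alpha}$ is defined spectrally and $P^*C_0P=\sum_{i=1}^n(\lambda^{(i)})^{-\alpha}\psi^{(i)}\otimes\psi^{(i)}$ is exactly the truncation of $\Cm_0$ to the span of the first $n$ eigenfunctions of the Stokes operator, we get $\|\Cm_0-P^*C_0P\|_{op}=(\lambda^{(n+1)})^{-\alpha}$, which tends to $0$ because the Stokes eigenvalues are unbounded (indeed $\lambda^{(j)}\asymp j$ on $\mathbb{T}^2$ by Weyl's law). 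Taking $\H'=\H^{\alpha}(\D)$ — which contains $m_0$ and is contained in the Cameron--Martin space $\text{Im}(\Cm_0^{1/2})$ — the same spectral computation gives $\|(I-P^*P)u\|_{\H}\le(\lambda^{(n+1)})^{-\alpha/2}\|u\|_{\H^{\alpha}}$, so Assumption~\ref{assumption:errors}~(iii) holds with $\psi_3(n)=(\lambda^{(n+1)})^{-\alpha/2}$.

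The substantive step is the continuous convergence of $\Phi^{(n)}$ to $\Phi$, i.e.\ that $u_n\to u$ in $\H$ implies $F(Pu_n)\to\F(u)$ in $\R^{d_y}$; continuity of $z\mapsto\tfrac12\|y-z\|_{\Gamma^{-1}}^2$ and of the evaluations $C(\D)\to\R^2$, $w\mapsto w(x_k)$, then transfer this to $\Phi^{(n)}(u_n)\to\Phi(u)$. Writing $G(u):=v(\cdot,t;u)$ and $G_n(u):=v^N(\cdot,t;Pu)$ for the continuum and Galerkin solution maps into $C(\D)$, it suffices to show that $G_n$ converges continuously to $G$. I decompose $G_n(u_n)-G(u)=\big[v^N(\cdot,t;Pu_n)-v(\cdot,t;Pu_n)\big]+\big[v(\cdot,t;Pu_n)-v(\cdot,t;u)\big]$. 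For the second bracket I use that $Pu_n\to u$ in $\H$ together with continuity of $G:\H\to C(\D)$; the latter follows from continuous dependence of the two-dimensional Navier--Stokes solution on its $\H$-valued initial datum, combined with the instantaneous parabolic smoothing that, since $f\in L^2(0,T;\H^s)$ with $s>0$, places $v(\cdot,t)$ in $\H^{1+s}(\D)\hookrightarrow C(\mathbb{T}^2)$ at the fixed observation time $t>0$ (here $d=2$, so $1+s>d/2$ and the embedding is valid). For the first bracket I invoke a Galerkin error estimate for 2D Navier--Stokes that is uniform over initial data in bounded subsets of $\H$: the sequence $\{Pu_n\}$ is $\H$-bounded, and energy/Gronwall arguments for the spectral Galerkin scheme on $\mathbb{T}^2$ — exploiting the subcriticality of the 2D nonlinearity and the uniform $\H^{1+s}$ smoothing bounds — give $\sup_{\|w\|_{\H}\le R}\|v^N(\cdot,t;w)-v(\cdot,t;w)\|_{C(\D)}\to 0$; see \cite{robinson2001infinite,cotter2010approximation} for the relevant approximation theory. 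Combining the two brackets yields $G_n(u_n)\to G(u)$ in $C(\D)$, hence continuous convergence of $\Phi^{(n)}$ to $\Phi$.

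With all three hypotheses verified, Theorem~\ref{thm:Gamma Convergence} gives $\Gamma\text{-}\lim_{n\to\infty}I^{(n)}_{\text{post}}=I_{\text{post}}$ together with equicoercivity of $\{I^{(n)}_{\text{post}}\}$, and Proposition~\ref{thm:gamma convergence theorem} then shows that cluster points of minimizers of $I^{(n)}_{\text{post}}$ — equivalently, of MAP estimators of $\mu^{n}_{\text{post}}$ — are minimizers of $I_{\text{post}}$, i.e.\ MAP estimators of $\mu_{\text{post}}$. I expect the main obstacle to be the uniform-in-initial-data Galerkin convergence in the sup norm: one must propagate $\H^{1+s}$ bounds uniform both in the truncation level $n$ and over $\|w\|_{\H}\le R$, control the convective term via the two-dimensional Ladyzhenskaya and Agmon inequalities, and upgrade $\H^{1+s}$ convergence to $C(\D)$ convergence through the Sobolev embedding; the remaining ingredients are routine spectral bookkeeping.
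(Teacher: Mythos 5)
Your proposal is correct and follows essentially the same route as the paper: spectral tail bounds verify Assumptions \ref{assumption:errors} (i) and (iii), and continuous convergence of $\Phi^{(n)}$ is obtained by splitting the error into a continuity-in-the-initial-datum piece and a Galerkin-approximation piece before invoking Theorem \ref{thm:Gamma Convergence}. The uniform-over-bounded-sets Galerkin estimate you flag as the main obstacle is precisely what the paper imports as Lemma 4.2 of \cite{cotter2010approximation} (whose constant is increasing in the $\H$-norm of the datum), with the local Lipschitz continuity of $\F$ supplied by Lemma 3.2 of \cite{cotter2009bayesian}, so no new PDE analysis is required.
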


\section{Conclusion and Future Directions}\label{sec:conclusions}
This paper analyzed a computational framework to solve infinite-dimensional Bayesian inverse problems. Working on a weighted inner product space, we have studied  finite element and graph-based discretizations in a unified framework, using Matérn-type priors and deconvolution forward models as guiding examples. We have established error guarantees for linear inverse problems, and analyzed ensemble Kalman algorithms and MAP estimators applicable in nonlinear inverse problems. In future work, our analysis may be extended to other types of discretizations, priors, and forward models. The generality of our presentation will facilitate the integration of numerical analysis for PDEs to obtain similar error guarantees for other priors and forward models. Additionally, our general approach to obtain error bounds for discretizations of covariance operators and forward maps will also facilitate the analysis under mesh refinement of other algorithms for nonlinear inverse problems.

\section*{Acknowledgments}
DSA is thankful for the support of NSF DMS-2027056, NSF DMS-2237628, DOE DE-SC0022232, and the BBVA Foundation. The authors are grateful to Yuming Chen for his generous feedback on a previous version of this manuscript.

\bibliographystyle{siamplain}
\bibliography{references}

\appendix

\section{Adjoints in Weighted Space}\label{sec:adjoint discussion}
Let $X$ and $Y$ be two Hilbert spaces. Recall that \cite{sunder1996functional}, given $A\in B(X,Y),$ the adjoint of $A$ is the unique map $A^*\in B(Y,X)$ such that
\begin{align} \label{eq:adjoint}
    \langle Ax,y\rangle_Y=\langle x,A^*y\rangle_X, \quad \quad \forall x \in X, \forall y \in Y.
\end{align}
In $\R^n$ equipped with the usual Euclidean inner product, the adjoint of a matrix coincides with the matrix transpose. However, for linear maps to and from the weighted inner product space $\R_M^n$, the structure of the adjoint specified by \eqref{eq:adjoint} no longer coincides with the matrix transpose. For the case where $A\in B(\R_M^n,\R_M^n)$, we have that $A^*$ must satisfy
\begin{align*}
    \vec{u}^TA^TM\vec{v}=\langle A\vec{u},\vec{v}\rangle_M=\langle \vec{u},A^*\vec{v}\rangle_M=\vec{u}^TMA^*\vec{v}, \quad \quad \forall \vec{u}, \vec{v} \in \R^n_M,
\end{align*}
from which it follows that $A^*=M^{-1}A^TM.$ Similarly, for $F\in B(\R_M^n,\R^{d_y})$, $F^{\natural}$ must satisfy
\begin{align*}
    \vec{u}^TF^Ty=\langle F\vec{u},y\rangle =\langle \vec{u},F^{\natural}y\rangle_M=\vec{u}^TMF^{\natural}y, \quad \quad \forall \vec{u}\in \R_M^n,\, \forall y\in \R^{d_y}, 
\end{align*}
from which it follows that $F^\natural=M^{-1}F^T.$

\section{Proofs Subsection \ref{ssec:errorFEM}}\label{AppendixFEMProofs}
This appendix contains the proofs of Theorems \ref{th:opNormPriorCov} and \ref{thm:forwardFEM} in Subsection \ref{ssec:errorFEM}. The proofs rely on classical finite element method convergence results from \cite{oden2012introduction}.
\begin{proof}[Proof of Theorem \ref{th:opNormPriorCov}]
We let $f\in L^2(\D)$ with $\|f\|_{L^2(\D)}\neq 0$. We proceed by induction on $\alpha$. For $\alpha=1$ we have that
            \begin{align*}
                \|\mathcal{A}^{-1}f-P^*A^{-1}Pf\|_{L^2(\D)}\leq ch^2\|\mathcal{A}^{-1}f\|_{H^2(\D)}\leq ch^2\|\mathcal{A}^{-1}\|_{op}\|f\|_{L^2(\D)},
            \end{align*}
            by \cite[Theorem 14.3.3]{brenner2008mathematical} and boundedness of the PDE solution operator from $L^2(\D)$ to $H^2(\D)$  (see \cite{evans2010partial} Chapter 6.3). Dividing both sides by $\|f\|_{L^2(\D)},$ we have that
            \begin{align*}
                \|\mathcal{A}^{-1}-P^*A^{-1}P\|_{op}\leq ch^2
            \end{align*}
            for a constant $c$ independent of $h$, completing the base case. Now, we assume that for some integer $\alpha\geq 1$, we have that
        \begin{align*}
            \|\mathcal{A}^{-\alpha}-P^*A^{-\alpha}P\|_{op}\leq ch^{2}.
        \end{align*}
        We then consider the quantity $\|\mathcal{A}^{-(\alpha+1)}f-P^*A^{-\alpha-1}Pf\|_{L^2(\D)}$. Then, by the triangle inequality we have that
        \begin{align*}
            \|\mathcal{A}^{-\alpha-1}f-P^*(K^{-1}M)^{\alpha+1}Pf\|_{L^2(\D)}&\leq \|\mathcal{A}^{-\alpha}(\mathcal{A}^{-1}-P^*A^{-1}P)f\|_{L^2(\D)} \\ 
            &+\|(\mathcal{A}^{-\alpha}-P^*A^{-\alpha}P)P^*A^{-1}Pf\|_{L^2(\D)}.
        \end{align*}
        The first of these terms can be controlled using the boundedness of $\mathcal{A}^{-1}$ as in the proof of the base case:
        \begin{align*}
            \|\mathcal{A}^{-\alpha}(\mathcal{A}^{-1}-P^*A^{-1}P)f\|_{L^2(\D)}\leq \|\mathcal{A}^{-1}\|_{op}^{\alpha}\|\mathcal{A}^{-1}f-P^*A^{-1}Pf\|_{L^2(\D)}\leq c\|\mathcal{A}^{-1}\|_{op}^{\alpha}h^2\|f\|_{L^2(\D)},
        \end{align*}
        and the second follows from submultiplicativity and the inductive hypothesis:
        \begin{align*}
            \|(\mathcal{A}^{-\alpha}-P^*A^{-\alpha}P)P^*A^{-1}Pf\|_{L^2(\D)}&\leq \|\mathcal{A}^{-\alpha}-P^*A^{-\alpha}P\|_{op}\|P^*A^{-1}Pf\|_{L^2(\D)} \\
            &\leq  c\|\mathcal{A}^{-1}\|_{op}h^2\|f\|_{L^2(\D)}.
        \end{align*}
        We conclude that
        \begin{align*}
            \|\mathcal{A}^{-\alpha-1}-P^*A^{-\alpha}P\|_{op}\lesssim h^2,
        \end{align*}
        completing the proof.
\end{proof}
\begin{proof}[Proof of Theorem \ref{thm:forwardFEM}]
    Let $u\in L^2(\D)$ with $\|u\|_{L^2(\D)}\neq 0$. We denote $v=\G u$ and $v_h= GPu.$ We then have that
    \begin{align*}
        \|\F u-FPu\|_2\leq \|\mathcal{O}\|_{op}\|v-v_h\|_{L^2(\D)}\leq \|\mathcal{O}\|_{op}\left( c_1h^{2}\|v\|_{H^{2}(\D)}+c_2\Delta t^2\|P u\|_{L^2(\D)}\right),
    \end{align*}
    where the first inequality uses the fact that $\mathcal{O}$ is a bounded linear operator from $L^2(\D)$ and the second uses Theorem \cite[Theorem 9.6]{oden2012introduction}, which is a classical finite element error result for the Crank-Nicolson method. Then, since $\G:L^2(\D)\to H^2(\D)$ is bounded and $\|P\|_{op}=1,$ we get that
    \begin{align*}
        \|\F u-FPu\|_2\leq c_1\|\mathcal{O}\|_{op}\|\G\|_{op}h^2\|u\|_{L^2(\D)}+c_2\|\mathcal{O}\|_{op}\Delta t^2\|u\|_{L^2(\D)}.
    \end{align*}
    Dividing both sides by $\|u\|_{L^2(\D)}$ gives the desired result.
\end{proof}
\section{Proofs Subsection \ref{ssec:errorgraph}}\label{AppendixGraphProofs}
This appendix contains the proofs of Theorems \ref{thm:opNormGraphCov} and \ref{thm:forwardgraph} in Subsection \ref{ssec:errorgraph}. The proofs rely on existing spectral convergence results from \cite{sanz2022spde}. We denote by $\{(\lambda^{(i)}_n, \psi^{(i)}_n)\}_{i=1}^n$ and  
 $\{(\lambda^{(i)},\psi^{(i)})\}_{i=1}^\infty$ the eigenpairs of the matrix $A$ defined in \eqref{eq:graphApprox} and the operator  $\mathcal{A}$ defined in  \eqref{eq:ellipticPDE},  with the eigenvalues in ascending order. We recall the spectral convergence results derived in \cite{sanz2022spde}:

\begin{proposition}\label{thm:graphEW}
    Suppose $k=k_n$ is such that $h_n\sqrt{\lambda^{(k)}}\ll 1$ for large $n$. Then,
    \begin{align}
        \frac{|\lambda_n^{(k)}-\lambda^{(k)}|}{\lambda^{(k)}}\leq c\left(\frac{\eps_n}{h_n}+h_n\sqrt{\lambda^{(k)}} \right),
    \end{align}
    where $c$ is a constant depending on $\M$, $\Theta$, and $b.$
\end{proposition}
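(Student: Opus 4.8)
The plan is to prove the eigenvalue comparison through the Courant--Fischer min-max characterization, using the transport map $T_n$ from Proposition~\ref{thm:transportMaps} to pass between the continuum Dirichlet form of $\mathcal{A}$ and the discrete Dirichlet form of $A=\Delta_n^{\Theta}+B_n$, with a \emph{non-local} continuum energy serving as a bridge. Write $\mathcal{E}(u)=\int_{\M}\Theta|\nabla u|^2\,d\gamma+\int_{\M}b\,u^2\,d\gamma$ for the quadratic form of $\mathcal{A}$, so that $\lambda^{(k)}=\min_{\dim V=k}\max_{0\neq u\in V}\mathcal{E}(u)/\|u\|_{L^2(\M)}^2$, and let the analogous formula with the Rayleigh quotient $\langle\vec u,A\vec u\rangle_M/\|\vec u\|_M^2$ give $\lambda_n^{(k)}$ (recall $M=\frac1n I_n$, so this quotient carries no explicit factors of $n$). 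The bridge is the non-local energy $\mathcal{E}_{h_n}(u)$ obtained by replacing $\int_{\M}\Theta|\nabla u|^2\,d\gamma$ with $\frac{d+2}{\nu_d h_n^{d+2}}\iint_{\|x-y\|_2<h_n}\sqrt{\Theta(x)\Theta(y)}\,\bigl(u(x)-u(y)\bigr)^2\,d\gamma(x)\,d\gamma(y)$, together with its empirical counterpart in which $\gamma$ is replaced by $\gamma_n$; the normalization is exactly the one in \eqref{weights}, chosen so that the second moment of the indicator kernel reproduces $\Theta|\nabla u|^2$ to leading order and so that the empirical version of the non-local Rayleigh quotient agrees with the Rayleigh quotient of $\Delta_n^{\Theta}$.

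The key steps, in order, are the following. \textbf{(1) Discrete versus non-local over the empirical measure.} A direct computation from \eqref{weights} shows that the Rayleigh quotient of $A$ equals the $\gamma_n$ non-local Rayleigh quotient plus the zeroth-order quotient built from $b(x_1),\dots,b(x_n)$, with all factors of $n$ cancelling. \textbf{(2) Transferring between the empirical and uniform measures.} Pushing forward by $T_n$ and using $\sup_{x\in\M}d_{\M}(x,T_n(x))\le\eps_n$, the displacement changes which pairs $(x,y)$ lie within distance $h_n$ only inside a spherical shell of width $O(\eps_n)$ about radius $h_n$; after dividing by the $h_n^{d+2}$ normalization this contributes a relative error of order $\eps_n/h_n$. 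The $b$-term is controlled by Lipschitz continuity of $b$ and $\eps_n\le h_n$, and the $L^2$-norm distortion $\bigl|\,\|u\|_{L^2(\gamma_n)}^2-\|u\|_{L^2(\gamma)}^2\,\bigr|$ is handled the same way. \textbf{(3) Non-local versus local.} For smooth $u$ a second-order Taylor expansion inside balls of radius $h_n$ yields $|\mathcal{E}_{h_n}(u)-\mathcal{E}(u)|\lesssim h_n^2\|u\|_{H^2(\M)}^2$ up to lower-order terms; for $u\in V=\mathrm{span}\{\psi^{(1)},\dots,\psi^{(k)}\}$, the elliptic estimate $\|u\|_{H^2(\M)}\lesssim\|\mathcal{A}u\|_{L^2(\M)}\le\lambda^{(k)}\|u\|_{L^2(\M)}$ turns this into a relative error of order $h_n\sqrt{\lambda^{(k)}}$. \textbf{(4) Min-max in both directions.} Restricting $V$ to the point cloud via $P$ and combining (1)--(3) with near-orthonormality of the restricted eigenfunctions in $\|\cdot\|_M$ gives $\lambda_n^{(k)}\le\lambda^{(k)}\bigl(1+c(\eps_n/h_n+h_n\sqrt{\lambda^{(k)}})\bigr)$; conversely, taking the span of $\psi_n^{(1)},\dots,\psi_n^{(k)}$, extending each by the locally constant interpolation built from $\{U_i\}$, mollifying at scale comparable to $h_n$ so the result lies in $H^1(\M)$, and running (1)--(3) backwards gives $\lambda^{(k)}\le\lambda_n^{(k)}\bigl(1+c(\eps_n/h_n+h_n\sqrt{\lambda^{(k)}})\bigr)$. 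Combining the two inequalities and rearranging --- legitimate because $h_n\sqrt{\lambda^{(k)}}\ll1$ keeps the correction factors bounded --- gives the stated two-sided relative bound.

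I expect the main obstacle to be the extension step in the second half of (4): functions on the point cloud extended through the cells $\{U_i\}$ are piecewise constant, hence not in $H^1(\M)$, so one must mollify and then show that mollification at scale $\sim h_n$ neither inflates the local Dirichlet energy beyond the non-local energy of the interpolant (at a slightly larger scale) nor distorts the $L^2$ norm. This requires commuting the mollifier with $T_n$ and controlling boundary layers both near the cell boundaries $\partial U_i$ and near $\partial\bigl(B_{h_n}(x)\cap\M\bigr)$ --- precisely where the bounded sectional curvature of $\M$ and the rate $\eps_n\lesssim(\log n)^{c_d}n^{-1/d}$ from Proposition~\ref{thm:transportMaps} come into play. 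A secondary subtlety is that the min-max comparison must hold uniformly over the whole test subspace, not just for individual eigenfunctions, which is why step (3) needs $\|u\|_{H^2(\M)}\lesssim\lambda^{(k)}\|u\|_{L^2(\M)}$ for every $u\in V$; this holds because any such $u$ is a combination of eigenfunctions with eigenvalue at most $\lambda^{(k)}$, so $\|\mathcal{A}u\|_{L^2}$ is controlled, and elliptic regularity on the closed manifold $\M$ converts this into control of $\|u\|_{H^2}$.
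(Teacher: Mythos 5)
The paper does not actually prove Proposition~\ref{thm:graphEW}: it is recalled verbatim from \cite{sanz2022spde} (the appendix says ``we recall the spectral convergence results derived in \cite{sanz2022spde}''), so there is no in-paper proof to compare against. Your outline is, in essence, the argument underlying that cited result and its antecedents (Burago--Ivanov--Kurylev and the Garc\'ia Trillos et al.\ line of work): Courant--Fischer min-max, a non-local Dirichlet energy at scale $h_n$ as a bridge, the transport map of Proposition~\ref{thm:transportMaps} to exchange $\gamma_n$ for $\gamma$ at relative cost $\eps_n/h_n$ (the shell of width $O(\eps_n)$ around radius $h_n$), and an interpolation-plus-mollification operator to run the comparison in the reverse direction. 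So this is the same approach as the actual source, correctly identifying both error terms and the genuinely delicate step (extending piecewise-constant functions off the point cloud into $H^1(\M)$).

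One piece of bookkeeping in step (3) does not add up as written: if $|\mathcal{E}_{h_n}(u)-\mathcal{E}(u)|\lesssim h_n^2\|u\|_{H^2(\M)}^2$ and $\|u\|_{H^2(\M)}\lesssim \lambda^{(k)}\|u\|_{L^2(\M)}$, the relative error against $\mathcal{E}(u)\asymp\lambda^{(k)}\|u\|_{L^2(\M)}^2$ would be $h_n^2\lambda^{(k)}=\bigl(h_n\sqrt{\lambda^{(k)}}\bigr)^2$, not $h_n\sqrt{\lambda^{(k)}}$. The term $h_n\sqrt{\lambda^{(k)}}$ in the proposition is genuinely first order in $h_n$; in the literature it arises from curvature/metric-distortion corrections when the Euclidean ball $\{\|x-y\|_2<h_n\}$ is compared to the geodesic ball, and from the mollification layer in the reverse min-max direction, not from a clean second-order Taylor remainder. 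Since your (incorrect) estimate is \emph{stronger} than what is needed, this does not break the conclusion, but it does mean the sketch misattributes where the dominant error actually comes from; a complete write-up would need the first-order non-local-versus-local comparison $|\mathcal{E}_{h_n}(u)-\mathcal{E}(u)|\lesssim h_n\sqrt{\lambda^{(k)}}\,\mathcal{E}(u)$ on the span of the first $k$ eigenfunctions, together with the uniform-over-subspace control you correctly flag at the end.
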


\begin{proposition}\label{thm:graphEV}
    Let $\lambda$ be an eigenvalue of $\mathcal{A}$ with multiplicity $\ell$. Suppose that $h_n\sqrt{\lambda^{(k_n)}}\ll 1$ and $\eps_n\ll h_n$ for $n$ large. Let $\psi_n^{(k_n)},\ldots,\psi_n^{(k_n+\ell-1)}$ be orthonormal eigenvectors of $A$ associated with eigenvalues $\lambda_n^{(k_n)},\ldots,\lambda_n^{(k_n+\ell-1)}$. Then, there exist orthonormal eigenfunctions $\psi^{(k_n)},\ldots,\psi^{(k_n+\ell-1)}$ of $\mathcal{A}$ so that, for $j=k_n,\ldots,k_n+\ell-1,$ 
    \begin{align}
        \|P^*\psi_n^{(j)}-\psi^{(j)}\|_{L^2(\M)}\leq  c j^{3/2}\left(\frac{\eps_n}{h_n}+h_n\sqrt{\lambda^{(j)}} \right)^{1/2},
    \end{align}
    where $c$ is a constant depending on $\M$, $\Theta$, and $b.$
\end{proposition}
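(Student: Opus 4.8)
The plan is to derive the eigenvector bound from the eigenvalue bound of Proposition \ref{thm:graphEW} together with a consistency estimate for the associated Dirichlet energies, closing the argument with a spectral-gap projection, in the style of the discrete-to-continuum spectral analysis of \cite{sanz2022spde,garcia2020error}. Throughout I write $\mathcal{E}(u)=\int_{\M}\Theta|\nabla u|^2+b\,u^2$ for the continuum quadratic form associated with $\mathcal{A}$ and $\mathcal{E}_n(\vec{u})=\langle \vec{u},A\vec{u}\rangle_M$ for its graph counterpart, so that $\lambda^{(j)}$ and $\lambda_n^{(j)}$ are the Courant--Fischer eigenvalues of $\mathcal{E}$ and $\mathcal{E}_n$. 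I will also use that $P^*$ is an $L^2(\M)$-isometry on $\R^n_M$ (since $PP^*=I$ gives $\langle P^*\vec{u},P^*\vec{v}\rangle_{\H}=\langle \vec{u},\vec{v}\rangle_M$), so the orthonormal graph frame $\{\psi_n^{(j)}\}$ is carried to an $L^2(\M)$-orthonormal family $\{P^*\psi_n^{(j)}\}$.

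First I would establish energy consistency: for a function $u$ in the span of the first $k_n$ continuum eigenfunctions, the graph form evaluated on $Pu$ approximates $\mathcal{E}(u)$ up to a relative error of order $\eps_n/h_n+h_n\sqrt{\lambda^{(k_n)}}$. The $\eps_n/h_n$ term arises from replacing integration against the uniform measure $\gamma$ by integration against the empirical measure $\gamma_n$ through the transport coupling of Proposition \ref{thm:transportMaps}, where the displacement $\eps_n=\sup_x d_{\M}(x,T_n(x))$ quantifies how much mass is moved; the $h_n\sqrt{\lambda}$ term is the nonlocal-to-local (bandwidth) bias incurred when the difference quotient of the graph form over balls of radius $h_n$ is compared with $\nabla u$, and is proportional to the frequency $\sqrt{\lambda}$ of $u$. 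This comparison, carried out separately for the stationary weights $W$ and for the $\sqrt{\Theta(x_i)\Theta(x_j)}$ modulation, is the mechanism that produces both error terms in Proposition \ref{thm:graphEW}.

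Granting energy consistency and the eigenvalue bound, I would extract the eigenvector as follows. Fix $j$, set $\phi_n=P^*\psi_n^{(j)}$ (an $L^2(\M)$-unit vector), and expand $\phi_n=\sum_i a_i\psi^{(i)}$ in the continuum eigenbasis, so $\mathcal{E}(\phi_n)=\sum_i\lambda^{(i)}a_i^2$. Energy consistency gives $\mathcal{E}(\phi_n)=\lambda_n^{(j)}\bigl(1+O(\delta_n)\bigr)$ with $\delta_n=\eps_n/h_n+h_n\sqrt{\lambda^{(j)}}$, and Proposition \ref{thm:graphEW} gives $\lambda_n^{(j)}=\lambda^{(j)}\bigl(1+O(\delta_n)\bigr)$, so that $\sum_i(\lambda^{(i)}-\lambda^{(j)})a_i^2=O(\lambda^{(j)}\delta_n)$. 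Simultaneously, orthonormality of the graph eigenvectors together with energy consistency forces $\phi_n$ to be approximately $L^2$-orthogonal to the lower continuum eigenfunctions $\psi^{(1)},\dots,\psi^{(j-1)}$, so the mass $\sum_{\lambda^{(i)}<\lambda}a_i^2$ below $\lambda=\lambda^{(j)}$ is itself $O(\delta_n)$. Writing $\gamma_{\text{gap}}$ for the distance from $\lambda$ to the rest of the spectrum, the off-eigenspace terms contribute at least $\gamma_{\text{gap}}\sum_{\lambda^{(i)}\neq\lambda}a_i^2$ in absolute value, whence $\sum_{\lambda^{(i)}\neq\lambda}a_i^2\lesssim\delta_n$. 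Defining $\psi^{(j)}$ to be the renormalized $L^2$-projection of $\phi_n$ onto the $\ell$-dimensional eigenspace of $\lambda$, orthonormalized across $j=k_n,\dots,k_n+\ell-1$, yields $\|\phi_n-\psi^{(j)}\|_{L^2(\M)}^2\lesssim\delta_n$; taking the square root produces the stated rate, the square root being exactly the passage from the quadratic-form error to the $L^2$ vector error.

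The main obstacle is the energy consistency step, which must hold uniformly over the entire frequency band up to $\lambda^{(k_n)}$ while simultaneously absorbing the empirical-measure sampling error and the nonlocal bandwidth bias; this is where both contributions to $\delta_n$ are generated and where the dependence on $\M$, $\Theta$, and $b$ enters the constant. A secondary difficulty is tracking the polynomial prefactor: controlling near-orthogonality against the $j-1$ lower eigenfunctions, through the growth of the eigenfunctions' higher norms with the index and the possibly shrinking spectral gaps, is what forces the factor $j^{3/2}$ recorded in the statement. Finally, because the matching continuum eigenfunctions are built from the $n$-dependent projection of $\phi_n$, the conclusion is the existence of an $n$-dependent orthonormal frame of the $\lambda$-eigenspace, which is precisely how Proposition \ref{thm:graphEV} is phrased.
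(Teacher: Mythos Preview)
The paper does not prove this proposition: it is stated in Appendix~\ref{AppendixGraphProofs} as a result ``recalled'' from \cite{sanz2022spde}, alongside Proposition~\ref{thm:graphEW}, and is then used as a black box in the proofs of Theorems~\ref{thm:opNormGraphCov} and~\ref{thm:forwardgraph}. So there is no paper-side proof to compare against directly.

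That said, your sketch is the right template and matches the strategy of the cited literature: establish a two-sided consistency inequality between the graph Dirichlet form $\mathcal{E}_n$ and the continuum form $\mathcal{E}$ (with the transport error $\eps_n/h_n$ and bandwidth bias $h_n\sqrt{\lambda}$ as the two error sources), feed in the eigenvalue estimate of Proposition~\ref{thm:graphEW}, and then use a spectral projection / Davis--Kahan type argument to pass from energy closeness to $L^2$ closeness of eigenvectors, with the square root appearing exactly where you say. Two places where your outline is thinner than the actual arguments in \cite{sanz2022spde,garcia2020error}: first, the claim that ``orthonormality of the graph eigenvectors together with energy consistency forces $\phi_n$ to be approximately $L^2$-orthogonal to $\psi^{(1)},\dots,\psi^{(j-1)}$'' hides an induction---you need the lower graph eigenvectors already close to the lower continuum eigenfunctions to get this orthogonality, and the accumulation of error over the $j-1$ lower modes is one genuine source of the polynomial prefactor. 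Second, dividing by a raw spectral gap $\gamma_{\text{gap}}$ is dangerous since continuum gaps can be arbitrarily small; the cited proofs instead work with the whole eigenspace of multiplicity $\ell$ at once (as the statement is phrased) and control the distance to the nearest \emph{distinct} eigenvalue via Weyl's law, which is where the remaining powers of $j$ enter. Your closing remarks acknowledge both issues, so the sketch is honest about its own gaps.
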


\begin{proof}[Proof of Theorem \ref{thm:opNormGraphCov}]
        Let $f\in L^2(\M)$. Note that $k_n$ was chosen such that we can apply \eqref{thm:graphEV} and \eqref{thm:graphEW} to quantify the errors in the eigenvalues and eigenvectors up to $i=k_n$. We denote
        \begin{align*}
             u&=\Cm_0f=\mathcal{A}^{-\alpha}f=\sum_{i=1}^\infty(\lambda^{(i)})^{-\alpha}\langle f, \psi^{(i)}\rangle_{L^2(\M)}\psi^{(i)}, \\
             u_n&=P^*(\Delta_n^{\Theta}+B_n)^{-\alpha}Pf=\sum_{i=1}^n(\lambda^{(i)}_n)^{-\alpha}\langle Pf,\psi^{(i)}_n\rangle_M P^*\psi_n^{(i)}.
        \end{align*}
        We remark that $\langle Pf,\psi^{(i)}_n\rangle_M=\langle f,P^*\psi^{(i)}_n\rangle_{L^2(\M)},$ so we can also write
        $$
        u_n=\sum_{i=1}^n(\lambda^{(i)}_n)^{-\alpha}\langle f,P^*\psi^{(i)}_n\rangle_{L^2(\M)} P^*\psi_n^{(i)}.
        $$
         We want to bound $\|u-u_n\|_{L^2(\M)}$. To do so, introduce four intermediate quantities:
         \begin{align*}
             u_n^{k_n} &= \sum_{i=1}^{k_n}(\lambda_n^{(i)})^{-\alpha}\langle f,P^*\psi^{(i)}_n\rangle_{L^2(\M)} P^*\psi_n^{(i)}, \\
             \tilde{u}^{k_n}_n &=\sum_{i=1}^{k_n}(\lambda^{(i)})^{-\alpha}\langle f,P^*\psi^{(i)}_n\rangle_{L^2(\M)} P^* \psi_n^{(i)}, \\
             \tilde{u}^{k_n}&=\sum_{i=1}^{k_n}(\lambda^{(i)})^{-\alpha}\langle f,P^*\psi^{(i)}_n\rangle_{L^2(\M)} \psi^{(i)}, \\
             u^{k_n}&=\sum_{i=1}^{k_n}(\lambda^{(i)})^{-\alpha}\langle f,\psi^{(i)}\rangle_{L^2(\M)}\psi^{(i)}.
         \end{align*}
        We will bound the difference between each two consecutive functions. By Weyl's law \cite[Theorem 72]{canzani2013analysis}, we have that
        \begin{align}\label{eq:bound1}
        \|u-u^{k_n}\|_{L^2(\M)}\leq \left(\sum_{i=k_n+1}^\infty (\lambda^{(i)})^{-2\alpha} \right)^{\frac{1}{2}}\lesssim \left(\sum_{i=k_n+1}^\infty i^{-\frac{4\alpha}{d}} \right)^{\frac{1}{2}}\lesssim \left( \int_{k_n}^\infty x^{-\frac{4\alpha}{d}}\right)^{\frac{1}{2}}\lesssim k_n^{\frac{1}{2}-\frac{2\alpha}{d}}.
        \end{align}
        By \eqref{thm:graphEW} and Weyl's law, we have that $\lambda_n^{(k_n)} \gtrsim \lambda^{(k_n)}\gtrsim k_n^{\frac{2}{d}}$, from which we get 
        \begin{align}\label{eq:bound2}
        \|u_n-u_n^{k_n}\|_{L^2(\M)}=\left(\sum_{i=k_n+1}^n(\lambda_n^{(i)})^{-2\alpha}|\langle f,P^*\psi^{(i)}_n\rangle_{L^2(\M)}|^2\right)^{\frac12}\leq (\lambda_n^{(k_n)})^{-\alpha}\lesssim k_n^{-\frac{2\alpha}{d}}.
        \end{align}
        Next, we note that since $\lambda_n^{(i)}$ and $\lambda^{(i)}$ are bounded from below by $\min_{x\in \M} b(x)>0$ and $x^{-\alpha}$ is continuously differentiable away from zero, the mean value theorem guarantees that 
        $$
        \left|(\lambda_n^{(i)})^{-\alpha}-(\lambda^{(i)})^{-\alpha} \right|\leq \alpha |\xi|^{-\alpha-1}\left|\lambda_{n}^{(i)}-\lambda^{(i)} \right|
        $$
        for some $\xi$ between $\lambda_n^{(i)}$ and $\lambda^{(i)}$. This inequality combined with Proposition \ref{thm:graphEW} implies that
        $$
        \left|(\lambda_n^{(i)})^{-\alpha}-(\lambda^{(i)})^{-\alpha} \right|\leq \alpha \left(\lambda_n^{(i)} \wedge \lambda^{(i)} \right)^{-\alpha-1}\left|\lambda_{n}^{(i)}-\lambda^{(i)} \right|\lesssim (\lambda^{(i)})^{-\alpha}\left(\frac{\eps_n}{h_n}+h_n \sqrt{\lambda^{(i)}} \right),
        $$
        for $i=1,\ldots,k_n.$ Using this fact, we get that
        \begin{align}
        \begin{split}\label{eq:bound3}
        \|u_n^{k_n}-\tilde{u}_n^{k_n}\|_{L^2(\M)} &\leq \left(\sum_{i=1}^{k_n}\left[(\lambda_n^{(i)})^{-\alpha}-(\lambda^{(i)})^{-\alpha} \right]^2 \right)^{\frac{1}{2}}  \lesssim \left(\sum_{i=1}^{k_n}(\lambda^{(i)})^{-2\alpha} \left(\frac{\eps_n}{h_n}+h_n\sqrt{\lambda^{(i)}} \right)^2\right)^{\frac{1}{2}} \\
        &\lesssim \left(\frac{\eps_n}{h_n}+h_n \right)\left(\sum_{i=1}^{k_n} (\lambda^{(i)})^{-2\alpha+1} \right)^{\frac{1}{2}}\lesssim \left( \frac{\eps_n}{h_n}+h_n \right),
        \end{split}
        \end{align}
        where the last step follows if $\alpha>\frac{d}{4}+\frac{1}{2}$. This is implied by the requirement that $\alpha>(5d + 1)/4$. Next, we use \eqref{thm:graphEV} to deduce that
        \begin{align}
        \begin{split}\label{eq:bound4}
        \|\tilde{u}^{k_n}-\tilde{u}^{k_n}_n\|_{L^2(\M)} &\lesssim \sum_{i=1}^{k_n}(\lambda^{(i)})^{-\alpha}\|\psi^{(i)}-P^*\psi^{(i)}_n\|_{L^2(\M)} \lesssim \sum_{i=1}^{k_n}(\lambda^{(i)})^{-\alpha}i^{\frac{3}{2}}\sqrt{\frac{\eps_n}{h_n}+h_n\sqrt{\lambda^{(i)}}} \\
        &\lesssim \sqrt{\frac{\eps_n}{h_n}+h_n}\sum_{i=1}^{k_n}i^{\frac{3}{2}}(\lambda^{(i)})^{-\alpha+\frac{1}{4}}\lesssim \sqrt{\frac{\eps_n}{h_n}+h_n}\sum_{i=1}^{k_n} i^{\frac{3}{2}-\frac{2\alpha}{d}+\frac{1}{2d}}\lesssim \sqrt{\frac{\eps_n}{h_n}+h_n},
        \end{split}
        \end{align}
        where the last step follows if $\alpha>\frac{5}{4}d+\frac{1}{4}.$ Finally, we use Cauchy-Schwartz to get that
        \begin{align}
        \begin{split}\label{eq:bound5}
        \|u^{k_n}-\tilde{u}^{k_n}\|_{L^2(\M)} &= \left\|\sum_{i=1}^{k_n}\langle f,\psi^{(i)}-P^*\psi^{(i)}_n\rangle_{L^2(\M)}(\lambda^{(i)})^{-\alpha}\psi^{(i)}\right\|_{L^2(\M)}\\
        &\lesssim \sum_{i=1}^{k_n}(\lambda^{(i)})^{-\alpha}\|\psi^{(i)}-P^*\psi^{(i)}_n\|_{L^2(\M)}\lesssim  \sqrt{\frac{\eps_n}{h_n}+h_n} \,\, ,
        \end{split}
        \end{align}
        where the last step follows by the same argument as for the previous expression if $\alpha>\frac{5}{4}d+\frac{1}{4}.$ Combining \eqref{eq:bound1},  \eqref{eq:bound2},  \eqref{eq:bound3},  \eqref{eq:bound4}, and \eqref{eq:bound5}, and taking $k_n \asymp n^{\frac{2d+1}{4\alpha}}$ we see that the error is dominated by
        $$
        \|\Cm_0-P^*(\Delta_n^{\Theta}+B_n)^{-\alpha}P\|_{op}\lesssim k_n^{\frac{1}{2}-\frac{2\alpha}{d}}+\sqrt{\frac{\eps_n}{h_n}+h_n}\lesssim (\log n)^{\frac{c_d}{4}}n^{-\frac{1}{4d}},
        $$    
        as desired.
    \end{proof}

 \begin{proof}[Proof of Theorem \ref{thm:forwardgraph}]
        Let $u\in L^2(\M)$ with $\|u\|_{L^2(\M)}\neq 0.$ We write
        \begin{align}
          v&=\G u=\sum_{i=1}^\infty \exp \left(-\lambda^{(i)} \right)\langle u,\psi^{(i)}\rangle_{L^2(\M)}\psi^{(i)}, \\ 
           v_n&=G_n(u_n) = \sum_{i=1}^n\exp \left(-\lambda^{(i)}_n \right)\langle P u,\psi^{(i)}_n\rangle_{L^2(\M)}P^*\psi^{(i)}_n.
        \end{align}
        We then consider the quantity
        \begin{align}\label{eq:graph forward model terms}
            \|\F u- FPu\|_2=\|\mathcal{O}v-\mathcal{O}^nv_n\|_2\leq \|\mathcal{O}\|_{op}\|v-v_n\|_{L^2(\D)}+\|\mathcal{O}v_n\mathcal{O}^nv_n\|_{L^2(\M)}.
        \end{align}
        We first want to bound $\|v-v_n\|_{L^2(\M)}.$ To do so, we introduce the following intermediate quantities:
        \begin{align*}
             v_n^{k_n} &= \sum_{i=1}^{k_n}\exp \left( -\lambda^{(i)}_n\right)\langle u,P^*\psi^{(i)}_n\rangle_{L^2(\M)} P^*\psi_n^{(i)}, \\
             \tilde{v}^{k_n}_n &=\sum_{i=1}^{k_n}\exp\left(-\lambda^{(i)}\right)\langle u,P^*\psi^{(i)}_n\rangle_{L^2(\M)} P^* \psi_n^{(i)}, \\
             \tilde{v}^{k_n}&=\sum_{i=1}^{k_n}\exp \left(-\lambda^{(i)}\right)\langle u,P^*\psi^{(i)}_n\rangle_{L^2(\M)} \psi^{(i)}, \\
             v^{k_n}&=\sum_{i=1}^{k_n}\exp \left(-\lambda^{(i)}\right)\langle u,\psi^{(i)}\rangle_{L^2(\M)}\psi^{(i)}.
         \end{align*} 
         We proceed to bound the difference between each two consecutive functions, as in the proof of Theorem \ref{thm:opNormGraphCov}. By Weyl's law, we have that
         \begin{align*}
             \|v-v^{k_n}\|_{L^2(\M)}\leq \left(\sum_{i=k_n+1}^{\infty}\exp \left( -\lambda^{(i)}\right) \right)^{\frac{1}{2}}  \hspace{-0.2cm}\lesssim \left(\sum_{i=k_n+1}^\infty \exp \left(-i^\frac{2}{d} \right)\right)^{\frac{1}{2}} \hspace{-0.2cm} \lesssim \left(\int_{k_n}^\infty \exp\left(-x^{\frac{2}{d}} \right)  dx\right)^{\frac12}.
         \end{align*}
         This integral does not have a nice closed form solution. However, for any fixed $\alpha>0$, it holds that $\exp(-x^{2/d})<x^{-4\alpha/d}$ for $x$ large enough. Assuming that $n$ is large enough such that this inequality holds for $x=k_n$, we then have the bound
         \begin{align}\label{eq:heat bound 1}
             \|v-v^{k_n}\|_{L^2(\M)}\leq \left(\int_{k_n}^\infty x^{-4\alpha/d} \, dx\right)^{\frac12}\lesssim k_n^{\frac{1}{2}-\frac{2\alpha}{d}}.
         \end{align}
         While this bound is not necessarily sharp, the operator-norm error will still be dominated by the error in approximating the eigenfunctions, so the final bound will still be the same. By \eqref{thm:graphEW} and Weyl's law, we have that $\lambda_n^{(k_n)} \gtrsim \lambda^{(k_n)}\gtrsim k_n^{\frac{2}{d}}$, from which we get 
        \begin{align}\label{eq:heat bound 2}
        \begin{split}
        \|v_n-v_n^{k_n}\|_{L^2(\M)}&=\left(\sum_{i=k_n+1}^n\exp\left(-\lambda_n^{(i)}\right)|\langle u,P^*\psi^{(i)}_n\rangle_{L^2(\M)}|^2\right)^{\frac12}  \\
        &\leq \exp\left(-\frac{1}{2}\lambda_n^{(k_n)}\right)  \lesssim \exp\left(-\frac{1}{2}k_n^{\frac{2}{d}}\right).
        \end{split}
        \end{align}
         Next, we note that since $e^{-x}$ is continuously differentiable we have that, for each $i=1,\ldots,k_n,$ 
         \begin{align*}
             \left| \exp \bigl(-\lambda_n^{(i)} \bigr) - \exp \bigl( -\lambda^{(i)}\bigr)\right|\lesssim \exp \Bigl( -(\lambda_n^{(i)} \wedge \lambda^{(i)}) \Bigr)\left|\lambda_n^{(i)}-\lambda^{(i)} \right|\lesssim \exp\bigl(-\lambda^{(i)} \bigr)\left(\frac{\eps_n}{h_n}+h_n\sqrt{\lambda^{(i)}} \right).
         \end{align*}
          From this, we get that
         \begin{align}\label{eq:heat bound 3}
         \begin{split}
             \|v_n^{k_n}-\tilde{v}_n^{k_n}\|_{L^2(\M)}& \leq \left(\sum_{i=1}^{k_n}\left|\exp \left(-\lambda_n^{(i)} \right) - \exp \left( -\lambda^{(i)}\right) \right| \right)^{\frac12} \\
             &\lesssim \left(\exp\left(-2\lambda^{(i)} \right)\left(\frac{\eps_n}{h_n}+h_n\sqrt{\lambda^{(i)}} \right)^2 \right)^{\frac12} \\
            & \lesssim \left(\frac{\eps_n}{h_n}+h_n \right)\left(\sum_{i=1}^{k_n}\frac{\lambda^{(i)}}{\exp\left(2\lambda^{(i)} \right)}\right)^{\frac12}\lesssim \left(\frac{\eps_n}{h_n}+h_n \right).
            \end{split}
         \end{align}
          Next, we use \eqref{thm:graphEV} to deduce that
        \begin{align}
        \begin{split}\label{eq:heat bound 4}
        \|\tilde{v}^{k_n}-\tilde{v}^{k_n}_n\|_{L^2(\M)} &\lesssim \sum_{i=1}^{k_n}\exp\left(-\lambda^{(i)}\right)\|\psi^{(i)}-P^*\psi^{(i)}_n\|_{L^2(\M)} \\
        &\lesssim \sum_{i=1}^{k_n}\exp\left(-\lambda^{(i)}\right)i^{\frac{3}{2}}\sqrt{\frac{\eps_n}{h_n}+h_n\sqrt{\lambda^{(i)}}} \\
        &\lesssim \sqrt{\frac{\eps_n}{h_n}+h_n}\sum_{i=1}^{k_n}i^{\frac{3}{2}}\exp\left(-\lambda^{(i)}\right)(\lambda^{(i)})^{\frac{1}{4}}\\
        &\lesssim \sqrt{\frac{\eps_n}{h_n}+h_n}\sum_{i=1}^{k_n} \frac{i^{\frac{3}{2}+\frac{1}{2d}}}{\exp\left( i^{2/d}\right)}\lesssim \sqrt{\frac{\eps_n}{h_n}+h_n}.
        \end{split}
        \end{align}
        Finally, we use Cauchy-Schwartz to get that
        \begin{align}
        \begin{split}\label{eq:heat bound 5}
        \|v^{k_n}-\tilde{v}^{k_n}\|_{L^2(\M)} &= \left\|\sum_{i=1}^{k_n}\langle u,\psi^{(i)}-P^*\psi^{(i)}_n\rangle_{L^2(\M)}\exp\left(-\lambda^{(i)}\right)\psi^{(i)}\right\|_{L^2(\M)}\\
        &\lesssim \sum_{i=1}^{k_n}\exp\left(-\lambda^{(i)}\right)\|\psi^{(i)}-P^*\psi^{(i)}_n\|_{L^2(\M)}\lesssim  \sqrt{\frac{\eps_n}{h_n}+h_n} \,\, ,
        \end{split}
        \end{align}
        where the last step follows by the same argument as for \eqref{eq:heat bound 4}. Combining \eqref{eq:heat bound 1}, \eqref{eq:heat bound 2}, \eqref{eq:heat bound 3}, \eqref{eq:heat bound 4}, and \eqref{eq:heat bound 5}, and taking $k_n \asymp n^{\frac{2d+1}{4\alpha}}$, we see that the error is dominated by
        \begin{align}\label{eq:heat equation graphs bound}
             \|v-v_n\|_{op}\lesssim k_n^{\frac{1}{2}-\frac{2\alpha}{d}}+\sqrt{\frac{\eps_n}{h_n}+h_n} \lesssim (\log n)^{\frac{c_d}{4}}n^{-\frac{1}{4d}},
        \end{align}
        as desired. Taking the supremum of both sides over all $\|u\|_{L^2(\M)}=1$, we get that
        \begin{align}\label{eq:heat op bound graphs}
             \|\G-P^*G_nP\|_{op}\lesssim k_n^{\frac{1}{2}-\frac{2\alpha}{d}}+\sqrt{\frac{\eps_n}{h_n}+h_n} \lesssim (\log n)^{\frac{c_d}{4}}n^{-\frac{1}{4d}}.
        \end{align}
        Now we proceed to bound $\|\mathcal{O}v_n-\mathcal{O}^nv_n\|_2.$ Recall that, for $k=1,\ldots d_y$, we have
        \begin{align*}
            \left[\mathcal{O}v_n\right]_k=\int_{B_{\delta}(x_k)\cap \M}\sum_{i=1}^nv_n(x)\mathbb{1}_{U_i}(x)d\gamma=\langle v_n,\mathbb{1}_{B_\delta(x_k)\cap \M}(x)\rangle_{L^2(\M)}
        \end{align*}
        and
        \begin{align*}
            \left[\mathcal{O}^nv_n\right]_k=\frac{1}{n}\sum_{i=1}^nv_n(x_i)\mathbb{1}_{B_{\delta}(x_k)}(x_i)=\langle P v_n, \mathbb{1}_{B_\delta(x_k)\cap \M_n}\rangle_M, 
        \end{align*}
        where $\mathbb{1}_{B_\delta(x_k)\cap \M_n}\in \R_M^n$ is the vector with $i^{th}$ entry $1$ if $x_i\in B_{\delta}(x_k)$ for all $x_i\in M_n.$ Note that here the points $x_i$ are in the point cloud $\M_n$ whereas the $x_k$ are the points at which the balls of radius $\delta$ in the observation operator are centered. By our definition of $P$, it follows that we can write
        \begin{align*}
            \left[\mathcal{O}^nv_n \right]_k=\langle v_n,P^*\mathbb{1}_{B_\delta (x_k)\cap \M_n}(x)\rangle_{L^2(\M)}.
        \end{align*}
        Consequently, we can apply Cauchy-Schwarz to get the bound
        \begin{align}\label{eq:observation term 1}
        \begin{split}
            \|\mathcal{O}v_n-\mathcal{O}^nv_n\|_2^2 &=\sum_{k=1}^{d_y}\langle v_n, \mathbb{1}_{B_\delta(x_k)\cap \M}(x)-P^*\mathbb{1}_{B_{\delta}(x_k)\cap \M_n}(x)\rangle_{L^2(\M)}\\ 
            &\leq \sum_{k=1}^{d_y}\|v_n\|_{L^2(\M)}^2\|\mathbb{1}_{B_{\delta}(x_k)\cap \M}-P^*\mathbb{1}_{B_\delta(x_k)\cap \M_n}\|_{L^2(\M)}^2.
            \end{split}
        \end{align}
        From \eqref{eq:heat equation graphs bound}, we have that 
        \begin{align*}
        \|v_n\|_{L^2(\M)}\leq \|v\|_{L^2(\M)}+c(\log n)^{\frac{c_d}{4}}n^{-\frac{1}{4d}}\leq \|\mathcal{G}\|_{op}\|u\|_{L^2(\M)}+c(\log n)^{\frac{c_d}{4}}n^{-\frac{1}{4d}}.
        \end{align*}
        Hence, for $n$ large enough it holds that
        \begin{align}\label{eq:observation term 2}
           \|v_n\|_{L^2(\M)} \leq 2\|\G\|_{op}\|u\|_{L^2(\M)}.
        \end{align} 
        It then remains to bound $\|\mathbb{1}_{B_{\delta}(x_k)\cap \M}-P^*\mathbb{1}_{B_\delta(x_k)\cap \M_n}\|_{L^2(\M)}^2$. To do so, we remark that we can write
        $$
        P^*\mathbb{1}_{B_{\delta}(x_k)\cap \M_n}(x)=\sum_{i=1}^n\mathbb{1}_{U_i}(x)\mathbb{1}_{B_{\delta}(x_k)}(x_i)=\sum_{i=1}^n\mathbb{1}_{U_i}(x)\mathbb{1}_{B_\delta(x_k)}\left(T_n(x)\right),
        $$
        since $T_n(x)=x_i$ for all $x\in U_i$. Since the sets $U_i$ partition $\M$, we have that
        $$
        P^*\mathbb{1}_{B_{\delta}(x_k)\cap \M_n}(x)=\mathbb{1}_{B_{\delta}(x_k)\cap \M}\left(T_n(x)\right).
        $$
        We then see that $\|\mathbb{1}_{B_{\delta}(x_k)\cap \M}-P^*\mathbb{1}_{B_\delta(x_k)\cap \M_n}\|_{L^2(\M)}^2$ is exactly the measure of the set where the indicator functions $\mathbb{1}_{B_{\delta}(x_k)\cap \M}\left(T_n(x)\right)$ and $\mathbb{1}_{B_{\delta}(x_k)\cap \M}\left(x\right)$ are not equal. Thus, we simply need to bound the measure of the sets 
        \begin{align*}E_k= \Bigl\{x\in \M: x\in B_{\delta}(x_k), T_n(x)\notin B_\delta(x_k)\Bigr\}\cup
        \Bigl\{x\in \M: x\notin B_{\delta}(x_k), T_n(x)\in B_\delta(x_k) \Bigr\}.
        \end{align*}
        We remark that for any $x\in U_i\subset B_{\delta}(x_k)$, we also then have that $T_n(x)\in U_i\subset  B_{\delta}(x_k)$. For $U_i\subset B_\delta(x_k)$, we have $U_i\subset E_k^C.$ Similarly, for any $x\in U_i\subset B_\delta(x_k)^C$, we also have that $T_n(x)\in U_i$, so $x\notin B_\delta(x_k)$ and $T_n(x)\notin B_\delta(x_k)$. Hence, for $U_i\subset B_\delta(x_k)^C$, we have $U_i \subset E_k^C.$ Since any $U_i$ that are entirely within $B_\delta(x_k)$ and any $U_i$ that are entirely outside of $B_\delta(x_k)$ are contained in the complement of $E_k$ and the sets $U_i$ partition $\M$, we must have that $E_k$ is contained in the union of the sets $U_i$ that have nontrivial intersection with both $B_\delta(x_k)$ and $B_\delta(x_k)^C.$ That is, $E_k$ is contained in the union of the sets $U_i$ that intersect the boundary of $B_\delta(x_k)\cap \M$:
        \begin{align*}
            E_k\subset \tilde{E}_k=\bigcup_{i:U_i\cap \partial(B_\delta(x_k)\cap \M) }U_i.
        \end{align*}
        From Theorem \ref{thm:transportMaps}, we have that $\sup_{x\in \M}d_{\M}\left(x,T_n(x)\right)=\eps_n$. That is, the geodesic distance between any two points in a set $U_i$ is at most $2\eps_n$. This fact, combined with the assumption that $\text{length}\left(\partial(B_\delta(x_k)\cap \M)\right)=C_{\delta,k},$ gives us that 
        \begin{align}\label{observation term 3}
            \gamma \left(\tilde{E}_k \right)\leq 2\eps_n \text{length}\left(\partial(B_\delta(x_k)\cap \M)\right)= 2C_{\delta,k}\eps_n.
        \end{align}
        The constant $C_{\delta,k}$ depends on $\delta$, $\M$, and $k$, but not $n$. This inequality, combined with \eqref{eq:observation term 1} and \eqref{eq:observation term 2} gives us that
        \begin{align}\label{eq:observation bound}
            \|\mathcal{O}v_n-\mathcal{O}^nv_n\|_2^2\leq 2d_y\|\G\|_{op}C_{\delta,d}\eps_n\lesssim d_y\frac{(\log n)^{c_d}}{n^{1/d}}.
        \end{align}
        Plugging \eqref{eq:heat equation graphs bound} and \eqref{eq:observation bound} into \eqref{eq:graph forward model terms} and taking the supremum of both sides over all $\|u\|_{L^2(\M)}=1$ yields
        \begin{align}
            \|\F-FP\|_{op}\lesssim (\log n)^{\frac{c_d}{4}}n^{-\frac{1}{4d}}+\sqrt{d_y}(\log n)^{\frac{c_d}{2}}n^{-\frac{1}{2d}}.
        \end{align}
        In our setting where the dimension of the observations is fixed, the error will be dominated by the first of these two terms.
    \end{proof}
    
\section{Proof of Theorem \ref{thm:Navier Stokes Gamma Convergence}}\label{appendix:proofexample}
\begin{proof}[Proof of Theorem \ref{thm:Navier Stokes Gamma Convergence}]
We want to verify the assumptions of Theorem \ref{thm:Gamma Convergence} for the setting of Eulerian data assimilation described in Example \ref{ex:Eulerian Data Assimilation}. First, we verify that the discretized prior mean converges to the continuum prior mean in $\H$. Since $m_0\in \H^{\alpha}(\D)$, we have from Equation (4.7) in \cite{cotter2010approximation} that
    \begin{align*}\label{eq:NS mean convergence}
        \|m_0-P^*Pm_0\|_{L^2(\D)}^2\leq \frac{1}{n^{2\alpha}}\|m_0\|_{\H^\alpha(\D)}^2,
    \end{align*}
    which verifies convergence of the discretized prior mean. Next, we verify operator norm convergence of the discretized prior covariance. For any $u\in \H$ with $\|u\|_{L^2(\D)}=1,$ 
\begin{align*}
    \|\Cm_0u-P^*C_0Pu\|_{L^2(\D)}^2=\sum_{i=n+1}^\infty (\lambda^{(i)})^{-2\alpha}\langle u, \psi^{(i)}\rangle_{L^2(\D)}^2\leq \sum_{i=n+1}^\infty \left( \lambda^{(i)}\right)^{-2\alpha} \hspace{-0.1cm} \lesssim \sum_{i=n+1}^\infty i^{-2\alpha}\lesssim \frac{1}{n^{2\alpha-1}}.
\end{align*}
Hence, we have that
\begin{align}\label{eq:NS covariance convergence}
    \|\Cm_0-P^*C_0P\|_{op}\lesssim \frac{1}{n^{\alpha-\frac{1}{2}}},
\end{align}
and since $\alpha>1$ we have shown operator norm convergence of the discretized prior covariance.

It remains to show that the discretized potential $\Phi^{(n)}$ converges continuously to $\Phi$ as $n\to \infty$. That is, we want to show that for every $u\in \H$ and every $\eps>0$, there exist $N\in \mathbb{N}$ and $\delta>0$ such that if $n\geq N$ and $u'\in B_{\delta}(u)$, then $\left|\Phi^{(n)}(u')-\Phi(u) \right|<\eps.$ Fix $u\in \H$ and $\eps>0$. From Lemma 3.2 in \cite{cotter2009bayesian}, we have that for any $u,u'\in \H$ and $f\in L^2(0,T;\H)$, there exists a constant $L(\|u\|_{L^2(\D)},\|u'\|_{L^2(\D)},|f|_0,t_0,\Gamma)$ such that 
\begin{align*}
    \|\F(u)-\F(u')\|_{\Gamma^{-1}}\leq L\|u-u'\|_{L^2(\D)}
\end{align*}
provided that $t>t_0>0.$ We then see that
\begin{align*}
    \Phi(u)-\Phi(u') &=\frac{1}{2}\left( \|y-\F(u)\|_{\Gamma^{-1}}-\|y-\F(u')\|_{\Gamma^{-1}}\right)\left( \|y-\F(u)\|_{\Gamma^{-1}}+\|y-\F(u')\|_{\Gamma^{-1}}\right)\\
        &\leq \frac{1}{2}\|\F(u)-\F(u')\|_{\Gamma^{-1}}\left( \|y-\F(u)\|_{\Gamma^{-1}}+\|y-\F(u')\|_{\Gamma^{-1}}\right)\\
        &\leq\frac{1}{2} \|\F(u)-\F(u')\|_{\Gamma^{-1}}\left(2\|y\|_{\Gamma^{-1}}+\|\F(u)\|_{\Gamma^{-1}}+\|\F(u')\|_{\Gamma^{-1}}\right) \\
        &\leq \frac{1}{2}L\|u-u'\|_{L^2(\D)}\left(2\|y\|_{\Gamma^{-1}}+2\|\F(u)\|_{\Gamma^{-1}}+L\|u-u'\|_{L^2(\D)}. \right).
    \end{align*}
    The exact same argument can be carried out for $\Phi(u')-\Phi(u)$ and yields the same upper bound, from which we conclude that
    \begin{align}\label{eq:phi bound 1}
        \left| \Phi(u)-\Phi(u')\right|\leq \frac{1}{2}L\|u-u'\|_{L^2(\D)}\left(2\|y\|_{\Gamma^{-1}}+2\|\F(u)\|_{\Gamma^{-1}}+Lt_0\|u-u'\|_{L^2(\D)}\right).
    \end{align}
    From Lemma 4.2 in \cite{cotter2010approximation}, we have that for the Galerkin approximation $v^N$ in \eqref{eq:navier stokes galerkin} to the true solution $v$ in \eqref{eq:navier stokes ODE}, there exists a constant $\tilde{L}\left(\|u\|_{L^2(\D)},t_0,\Gamma \right)$ such that, for any $t>t_0>0,$
\begin{align*}
    \|v(t)-v^N(t)\|_{L^\infty(\D)}\leq \tilde{L}(\|u\|_{L^2(\D)})\psi(n),
\end{align*}
where $\psi(n)\to 0$ as $n\to \infty$ and $\tilde{L}(\|u\|_{L^2(\D)})$ is continuous and increasing in $\|u\|_{L^2(\D)}$. It follows from this inequality that
\begin{align*}
    \|\F(u')-F(u')\|_{\Gamma^{-1}}\leq \tilde{L}(\|u'\|_{L^2(\D)})\psi(n)\leq \tilde{L}(\|u\|_{L^2(\D)}+\delta)\psi(n).
\end{align*}
Then, we observe that
\begin{align*}
    \Phi(u')-\Phi^{(n)}(u') &=\frac{1}{2}\left( \|y-\F(u')\|_{\Gamma^{-1}}-\|y-F(u')\|_{\Gamma^{-1}}\right)\left( \|y-\F(u')\|_{\Gamma^{-1}}+\|y-F(u')\|_{\Gamma^{-1}}\right)\\
    &\leq \frac{1}{2}\|\F(u')-F(u')\|_{\Gamma^{-1}}\left( \|y-\F(u')\|_{\Gamma^{-1}}+\|y-F(u')\|_{\Gamma^{-1}}\right)\\
    &\leq \frac{1}{2}\|\F(u')-F(u')\|_{\Gamma^{-1}}\left(2\|y\|_{\Gamma^{-1}}+\|\F(u')\|_{\Gamma^{-1}}+\|F(u')\|_{\Gamma^{-1}}\right)\\
    &\leq \frac{1}{2}\tilde{L}(\|u\|_{L^2(\D)}+\delta)\psi(n)\left(2\|y\|_{\Gamma^{-1}}+2\|\F(u)\|_{\Gamma^{-1}}+\tilde{L}(\|u\|_{L^2(\D)}+\delta)\psi(n) \right).
\end{align*}
Again, the same argument carried out for $\Phi^{(n)}(u')-\Phi(u')$ yields the same upper bound, and we conclude that  
\begin{align}\label{eq:phi bound 2}
    \left|\Phi(u')-\Phi^{(n)}(u') \right|\leq \frac{1}{2}\tilde{L}(\|u\|_{L^2(\D)}+\delta)\psi(n)\left(2\|y\|_{\Gamma^{-1}}+2\|\F(u)\|_{\Gamma^{-1}}+\tilde{L}(\|u\|_{L^2(\D)}+\delta)\psi(n) \right).
\end{align}
From \eqref{eq:phi bound 1} we see that we can pick $\delta$ small enough such that $|\Phi(u)-\Phi(u')|< \frac{\eps}{2}$ for any $u'\in B_{\delta}(u)\subset \H$, and from \eqref{eq:phi bound 2} we can pick $N$ large enough such that, for any $n\geq N,$ we have $|\Phi(u')-\Phi^{(n)}(u')|< \frac{\eps}{2}$ for any $u'\in B_{\delta}(u)\subset \H$. Consequently,
there exist $N$ and $\delta$ such that, for any $n\geq N$ and $u'\in B_\delta(u)\in \H$, 
\begin{align}
    \left|\Phi(u)-\Phi^{(n)}(u')\right|<\eps,
\end{align}
proving continuous convergence of $\Phi^{(n)}$ to $\Phi$ as $n\to \infty$. We can then apply Theorem \ref{thm:Gamma Convergence} and complete the proof.
\end{proof}

\end{document}